\newcommand{\bfe}{{\bf e}}
\newcommand{\bfn}{{\bf n}}
\newcommand{\bfN}{{\bf N}}
\def\bm#1{\text{\boldmath$#1$}}
\newcommand{\de}{\delta}
\newcommand{\ep}{\varepsilon}
\newcommand\nl{\mathrm{null}}
\newcommand\dec{\mathrm{dec}}
\newcommand\tc[2]{\theta\chars{#1}{#2}}
\newcommand\tnull{{\theta_{\operatorname{null}}}}
\newcommand\chars[2]{\left[\begin{smallmatrix}#1\\ #2\end{smallmatrix}\right]}
\newcommand{\CC}{{\mathbb{C}}}
\newcommand{\HH}{{\mathbb{H}}}
\newcommand{\EE}{{\mathbb{E}}}
\newcommand{\PP}{{\mathbb{P}}}
\newcommand{\QQ}{{\mathbb{Q}}}
\newcommand{\RR}{{\mathbb{R}}}
\newcommand{\ZZ}{{\mathbb{Z}}}
\newcommand{\NN}{{\mathbb{N}}}
\newcommand{\calH}{{\mathcal H}}
\newcommand{\calI}{{\mathcal I}}
\newcommand{\calA}{{\mathcal A}}
\newcommand{\calL}{{\mathcal L}}
\newcommand{\calM}{{\mathcal M}}
\newcommand{\calJ}{{\mathcal J}}
\newcommand{\calX}{{\mathcal X}}
\newcommand{\calD}{{\mathcal D}}
\newcommand{\calS}{{\mathcal S}}
\newcommand{\fR}{{\mathfrak R}}
\newcommand{\hI}{{\widehat I}}
\newcommand{\bfp}{{\mathbf n'}}
\newcommand{\bfP}{{\mathbf N'}}
\newcommand{\DIFF}{\mathfrak{D}}
\newcommand\tP{\widetilde{P}}
\newcommand{\op}{\operatorname}
\newcommand{\Sat}[1][g]{{\calA_{#1}^*}}
\newcommand{\Perf}[1][g]{{\overline{\calA}_{#1}}}
\newcommand{\Part}[1][g]{{\calA'_{#1}}}
\newcommand{\Hom}{\op{Hom}}
\newcommand{\Sp}{\op{Sp}}
\newcommand{\GL}{\op{GL}}
\newcommand{\SL}{\op{SL}}
\newcommand{\Sym}{\op{Sym}}
\newcommand{\Pic}{\op{Pic}}
\newcommand{\even}{\op{even}}
\newcommand{\ord}{\op{ord}}
\newcommand{\Hess}{\op{Hess}}
\newcommand\sgn{\mathrm{sgn}}
\newcommand\Eff{{\operatorname{Eff}}}
\newcommand\Amp{{\operatorname{Amp}}}
\newcommand\Mov{{\operatorname{Mov}}}
\newcommand\cond{$\star$}
\newcommand{\pa}{\partial}
\theoremstyle{plain}
\newtheorem{thm}{Theorem}[section]
\newtheorem{lm}[thm]{Lemma}
\newtheorem{prop}[thm]{Proposition}
\newtheorem{cor}[thm]{Corollary}
\newtheorem{maintheorem}{Theorem}
\newenvironment{mainthm}[1]
{
\begin{maintheorem}}
{\end{maintheorem}}
\newtheorem{maincorollary}{Corollary}
\newenvironment{maincor}[1]
{
\begin{maincorollary}}
{\end{maincorollary}}
\theoremstyle{definition}
\newtheorem{df}[thm]{Definition}
\newtheorem{rem}[thm]{Remark}
\newtheorem{ntn}[thm]{Notation}
\newtheorem{exa}[thm]{Example}
\title[ Differentiating  modular forms and the moving slope of $\calA_g$]{Differentiating Siegel modular forms \\ and the moving slope of $\calA_g$}
\author[S. Grushevsky]{Samuel Grushevsky}
\address{Department of Mathematics and Simons Center for Geometry and Physics, Stony Brook University, Stony Brook, NY 11794-3651}
\email{sam@math.stonybrook.edu}
\author[T. Ibukiyama] {Tomoyoshi Ibukiyama}
\address{Department of Mathematics, Graduate School of Mathematics, Osaka University,
Machikaneyama 1-1, Toyonaka, Osaka, 560-0043 Japan}
\email{ibukiyam@math.sci.osaka-u.ac.jp}
\author[G. Mondello] {Gabriele Mondello}
\address{Universit\`a ``La Sapienza'', Dipartimento di Matematica Guido Castelnuovo, Piazzale A. Moro 2, I-00185, Roma,   Italy}
\email{mondello@mat.uniroma1.it}
\author[R. Salvati Manni]{Riccardo Salvati Manni}
\address{Universit\`a ``La Sapienza'', Dipartimento di Matematica Guido Castelnuovo, Piazzale A. Moro 2, I-00185, Roma,   Italy}
\email{salvati@mat.uniroma1.it}
\thanks{Research of the first author is supported in part by NSF grants DMS-18-02116 and DMS-21-01631. The work of the second author is supported by JSPS Kakenhi Grant Number JP19K03424. The third and fourth  authors were partially supported by PRIN 2017 grant ``Moduli spaces and Lie theory''.  The third author was also partially supported by INdAM GNSAGA research group.}
\date{\today}
\begin{document}

\begin{abstract}
We study the cone of moving divisors on the moduli space $\calA_g$ of principally polarized abelian varieties. Partly motivated by the generalized Rankin-Cohen bracket, we construct a non-linear holomorphic differential operator that sends Siegel modular forms to Siegel modular forms, and we apply it to produce new modular forms. Our construction recovers the known divisors of minimal moving slope on $\calA_g$ for $g\leq 4$, and gives an explicit upper bound for the moving slope of $\calA_5$ and a conjectural  upper bound for the moving slope of $\calA_6$.
\end{abstract}

\maketitle

\section{Introduction}

\subsection{Moduli of ppav and compactifications}
Denote $\calA_g$ the moduli space of complex principally polarized abelian varieties (ppav), which is the quotient of its (orbifold) universal cover, the Siegel upper half-space $\HH_g$, by the action of the symplectic group $\Sp(2g,\ZZ)$. Let $\Sat$ denote the Satake-Baily-Borel compactification, and recall that the Picard group $\Pic_\QQ(\Sat)=\QQ \lambda$ is one-dimensional, generated by the class~$\lambda$ of the line bundle $\calL\rightarrow\Sat$ of Siegel modular forms of weight one, which is ample on~$\Sat$.

Let $\Part$ be Mumford's partial compactification of $\calA_g$, so that
$\pa\Part=\calX_{g-1}/\pm 1$, where
$\pi:\calX_{g-1}\to\calA_{g-1}$ denotes the universal family of ppav of dimension~$g-1$.

All toroidal compactifications of $\calA_g$ contain $\Part$. The boundary of the perfect cone toroidal compactification $\Perf$ (we use this notation as no other toroidal compactification will appear) is an irreducible Cartier divisor~$D$, and $\pa\Part$ is dense within $D$. The compactification $\Perf$ is $\QQ$-factorial, with $\Pic_\QQ\Perf=\QQ\lambda\oplus \QQ\delta$, where
$\delta$ denotes the class of~$D$. The Picard group $\Pic_\QQ\Part$ is generated by the restrictions of the classes $\lambda$ and $\delta$ from $\Perf$ to $\Part$. 
Philosophically, in what follows, the definition of the slope of divisors takes place on $\Part$, though to formally make sense of it we work on $\Part$ (and refer to \cite[Appendix]{gsmPrym} for a discussion of why this notion is the same for any other toroidal compactification).

\subsection{The ample and effective slopes}
Given a divisor $E$ on $\Perf$ such that its class in the Picard group is $[E]=a\lambda-b\delta$, its {\em slope} is defined to be $s(E)\coloneqq a/b$. The slope of a cone of divisors on $\Perf$ is defined as the infimum of the slopes of divisors contained in the cone.
Shepherd-Barron \cite{sb} proved that the {\em ample slope} of $\calA_g$, that is the slope of the cone of ample divisors
is equal to $12$, namely
$$
 s_\Amp(\Perf)\coloneqq \inf\left\{s(E)\colon E\in \Amp(\Perf)\right\}=12\,.
$$

The {\em effective slope}, that is the slope of the cone of effective divisors
$$
 s_\Eff(\Perf)\coloneqq \inf\left\{s(E)\colon E\in \Eff(\Perf)\right\}\, ,
$$
has attracted a lot of attention, in particular because
\[
s(K_{\Perf})=s\left((g+1)\lambda-\delta\right)=g+1\,,
\]
so that the inequality $s_\Eff(\Perf)<g+1$ would imply that $\calA_g$ is of general type. Freitag \cite{frei} used the theta-null divisor $\tnull$, of slope $s(\tnull)=8+2^{3-g}$, to show that $\calA_g$ is of general type for $g\ge 8$, Mumford \cite{mum} used the Andreotti-Mayer divisor $N_0$, of slope $s(N_0)=6+\tfrac{12}{g+1}$, to show that $\calA_g$ is of general type for $g\ge 7$, while recently the fourth author with collaborators \cite{mss} showed that $s_\Eff(\Perf[6])\leq 7$, which implies that the Kodaira dimension of $\calA_6$ is non-negative.

It is known that $\calA_g$ is unirational for $g\leq 5$ (see \cite{MoMu}, \cite{C}, \cite{D},  \cite{V} for the harder cases of $g=4,5$). In fact $s_\Eff(\Perf)$ is known explicitly for $g\leq 5$: the computation of $s_\Eff(\Perf[5])$ is one of the main results of \cite{fgsmv}, and the lower genera cases are reviewed below.

On the other hand, the slope $s_\Eff(\Perf)$ is not known for any $g\ge 6$. While the techniques of Tai \cite{tai} show that $s_\Eff(\Perf)=O(1/g)$ for $g\to \infty$ (as explained in~\cite{grag}), not a single explicit effective divisor~$E$ on $\calA_g$, for any~$g$, with $s(E)\leq 6$ is known.

The analogous notion of effective slope for the moduli space of curves $\calM_g$ has been
investigated in many papers, in particular for its similar link with the Kodaira dimension of $\calM_g$, starting with \cite{harrismumford}
\cite{harris}, \cite{eisenbudharris}, and with continuing recent progress such as \cite{fjp}.

\subsection{The moving slope}\label{sec:intro-moving}
Recall that an effective divisor $E$ is called {\em moving} if $h^0(E)>1$ and if moreover the base locus of its linear system $|E|$ has codimension at least two.
The {\em moving slope} is the slope of the cone $\Mov$ of moving divisors
$$
 s_\Mov(\Perf)\coloneqq \inf\{s(E)\colon E\in \Mov(\Perf)\}\,.
$$
Since the moving cone is contained in the effective cone, we have $s_\Eff(\calA_g)\leq s_\Mov(\calA_g)$. We first observe that if the effective slope is in fact an infimum but not a minimum, then $s_\Eff(\calA_g)=s_\Mov(\calA_g)$ since there is an infinite sequence of effective divisors of strictly decreasing slopes converging to this infimum (see \Cref{lm:properties}(iii) for a precise statement and proof).
Thus investigating the moving slope is only of interest if there exists an effective divisor $E\subset\calA_g$ of slope $s(E)=s_\Eff(\calA_g)$.

\smallskip

While the moving slope of $\Perf$ is less well-studied than the effective slope, it is also important in attempting to determine the structure of the ring of Siegel modular forms, and in attempting to run the log-MMP for $\calA_g$ and determine its interesting birational models:
in fact, the pull-back of an ample
divisor on a normal projective variety $X$ via a non-constant rational map
$f:\Perf\dashrightarrow X$ is a moving divisor, as remarked in \cite[Section 1.2]{cfm}.

The moving slope of $\Perf$ is known for $g\leq 4$, as we will review below, and Tai's results also imply that $s_\Mov(\Perf)=O(1/g)$ as $g\to\infty$. While the original published version of the paper \cite{fgsmv_published} claimed an upper bound for $s_\Mov(\Perf[5])$, there was a numerical error, and the corrected (arXiv) version \cite{fgsmv} does not allow to deduce any statement on $s_\Mov(\Perf[5])$.
For $g=6$ the knowledge of the moving slope of $\Perf[6]$ would help
determining the Kodaira dimension of $\calA_6$, if it turns out that $s_\Eff(\Perf[6])=7=s(K_{\Perf[6]}$).
As in the case $g=5$, though, the moving slope of $\Perf$ remains unknown at present for every $g\geq 6$.

\subsection{Context}
Our paper revolves around the problem of constructing, from a given modular form, or from given modular forms, new modular forms of controlled slope. In particular, given a modular form of minimal slope, such procedure can provide other interesting modular forms of low slope: for example, for $2\leq g\leq 4$ it does provide a modular form of minimal moving slope (\Cref{maincor:mov}). Our construction(s) will consist
in applying certain holomorphic differential operators to Siegel modular forms, so as to yield Siegel modular forms again (\Cref{thm:main}).

For motivation, recall the definition of two such well-known operators for $g=1$.
The first one is the {\em Serre derivative} (credited by Serre \cite[Theorem 4]{serre:modulaire}
to Ramanujan \cite{ramanujan}): it sends Siegel modular forms
of weight $a$ to modular forms of weight $a+2$, and is defined as
$\calS_a(F):=\frac{d F}{d \tau}-\frac{\pi ia}{6}\,E_2\cdot F$, where $E_2$ is the Eisenstein series
of weight $2$ (see also \cite[Section 5]{zagier} and \cite[Lemma 3]{sd}).
The second one is the second {\em Rankin-Cohen bracket} (see \cite{rankin} and \cite{cohen}), which sends a modular form of weight $a$
to a modular form of weight $2a+4$, and is defined as $[F,F]_{2,a}:=aF\frac{d^2 F}{d\tau^2}- (a + 1)\left(\frac{d F}{d \tau}\right)^2$.
Note that $\calS_a$ is a $1$-homogeneous (i.e.~multiplying $F$ by a constant $\lambda$ multiplies $\calS_a(F)$ by $\lambda^1$)
differential operator in $\tau$  with non-constant
coefficients, while $[\cdot,\cdot]_{2,a}$ is $2$-homogeneous, of pure order $2$
(meaning that all summands involve
the derivative $\frac{d}{d\tau}$ twice),
with constant coefficients.
There are also $2n$-th Rankin-Cohen brackets $[\cdot,\cdot]_{2n,a}$, which
are $2n$-homogeneous, of pure order $2n$, with constant coefficients,
and send modular forms of weight $a$ to modular forms of weight $2a+4n$.

The holomorphic differential operators that we will produce
for $g\geq 2$ are, on one hand, analogous to $\calS_a$,
as they will be $g$-homogeneous, of order $g$; on the other hand,
they share some similarities with the even Rankin-Cohen brackets,
as they will be pure of order~$g$ (meaning that each summand
involves exactly $g$ partial derivatives),
with constant coefficients.% (when expressed with respect to the coordinates $\tau_{ij}$).

\subsection{Main results}

In order to formulate our main result, given a holomorphic function $F:\HH_g\to\CC$, we assemble the coefficients of its differential $dF$
into the matrix
\[
\pa  F\coloneqq\left(\begin{array}{cccc}
 \frac{\pa  F}{\pa  \tau_{11}} & \frac{\pa  F}{2\pa  \tau_{12}}&\dots& \frac{\pa  F}{2\pa  \tau_{1g}} \\
\vdots & \vdots&\ddots&\vdots \\
  \frac{\pa  F}{2\pa  \tau_{g1} }& \frac{\pa  F}{2\pa  \tau_{g2}}&\dots& \frac{\pa  F}{\pa  \tau_{gg}}
 \end{array}\right)\,,
\]
and we consider the holomorphic function $\det(\pa  F):\HH_g\to\CC$.

Suppose now that $F$ is a modular form of weight $a$, with vanishing order $b$ along the boundary $\pa\Perf$ (this will be defined formally in the next section). The determinant $\det(\pa  F)$ is in general not a modular form, but its restriction to the zero locus $\{F=0\}$ behaves as
a modular form of weight $ga+2$ (a more intrinsic approach to $\det(\pa  F)$ will be given in \Cref{rem:intrinsic}). Our main result is the following construction.

\begin{mainthm}{A}\label{thm:main}
For every $g\geq 2$ and every integer $a\geq \frac{g}{2}$ there exists a
differential operator $\DIFF_{g,a}$
acting on the space of genus $g$ Siegel modular forms of weight $a$ that satisfies the following properties:
\begin{itemize}
\item[(i)]
if $F$ is a genus $g$ Siegel modular form of weight $a$ and vanishing order $b$ along the boundary, then $\DIFF_{g,a}(F)$ is a Siegel modular form of weight $ga+2$
and of vanishing order $\beta\ge gb$ along the boundary;
\item[(ii)]
the restriction of $\DIFF_{g,a}(F)$ to the zero locus of $F$ is equal to the restriction of $\det(\pa  F)$.
\end{itemize}
\end{mainthm}

\begin{rem}
In \Cref{thm:main} it is possible to deal with Siegel modular forms $F$ with character with respect to $\Sp(2g,\ZZ)$, which occur only for $g=2$ only. Since $\DIFF_{2,a}$ is quadratic, $\DIFF_{2,a}(F)$ will then still be a modular form (with trivial character).
\end{rem}

What we will actually prove is a more precise version of this statement.
In \Cref{thm:precise} we construct
for every $g\geq 2$ and $a\geq \frac{g}{2}$
a holomorphic differential operator $\calD_{Q_{g,a}}$ in the $\tau_{ij}$
with constant coefficients
and we define $\DIFF_{g,a}(F):=\calD_{Q_{g,a}}(F)/g!$ for every
Siegel modular forms $F$ of genus $g$ and weight $a$.
Thus $\DIFF_{g,a}(F)$ is always polynomial in~$F$ and its partial derivatives,
though its coefficients depend on the weight $a$.
Though the operator $\DIFF_{g,a}$ need not be unique,
properties (i-ii) in \Cref{thm:main}
force the Siegel modular form
$\DIFF_{g,a}(F)$ to be unique up to adding modular forms divisible by~$F$ (which would thus vanish on the zero locus of~$F$).
The construction is explicit, and in \Cref{sec:explicit} we will give the formulas for $\DIFF_{2,a}$ and $\DIFF_{3,a}$ explicitly.

\smallskip
A priori $\DIFF_{g,a}(F)$ could have a common factor with $F$, or could even be identically zero. In order to prevent such behavior,
we will apply \Cref{thm:main} only to modular forms $F$ that satisfy what will be our {\em main condition:}
\begin{equation}\tag{\cond}\label{cond}
\begin{array}{c}
 \hfill \text{\it $\det(\pa  F)$ does not vanish identically }\hfill\\
 \hfill\text{\it on any irreducible component of $\{F=0\}$.}\hfill
 \end{array}
\end{equation}
%By abuse of language, we will say that an effective divisor $E\subset\calA_g$ satisfies the standing assumption \eqref{cond} if and only if the modular form $F$ defining~$E$ satisfies the standard assumption (we will recall in \Cref{sec:modforms} that the closure of any effective divisor $E\subset\calA_g$ is defined by some Siegel modular form).

Our main application is an immediate consequence of \Cref{thm:main}.
\begin{maincor}{B}\label{cor:main}
Suppose that the effective slope $s_\Eff(\Perf)=a/b$ is realized by
a modular form $F$ of weight $a\geq\frac{g}{2}$
that satisfies Condition \eqref{cond}. Then
\begin{equation}\label{eq:ineqEffMov}
s_\Mov(\Perf)\leq s(\DIFF_{g,a}(F))\leq s_\Eff(\Perf)+\tfrac{2}{bg}\,.
\end{equation}
\end{maincor}

We first note that if the zero locus of $F$ in \Cref{cor:main} is not irreducible,
then every its irreducible component must have the same slope. As a consequence, $s_\Mov(\Perf)=s_\Eff(\Perf)$ (as will be proven carefully in \Cref{lm:properties}(i)), and so the statement becomes trivial. Thus we can assume that the zero locus of $F$ is irreducible.

We stress that the inequality \eqref{eq:ineqEffMov} for the moving slope depends on the actual class $[F]$, not just on the slope~$s(F)$. Moreover, Condition \eqref{cond} forces $F$ to be square-free.  For every $g\leq 5$ it is known that a reduced effective divisor on $\Perf$ of minimal slope
exists and is unique. For $g\leq 4$ the machinery
of \Cref{cor:main} produces an (already known) divisor
that realizes the moving slope.

\begin{maincor}{C}\label{maincor:mov}
For $2\leq g\leq 4$ the modular form $F$ of minimal slope on $\Perf$
satisfies Condition \eqref{cond} and has weight $a\geq \frac{g}{2}$.
Moreover, $\DIFF_{g,a}(F)$ realizes the moving slope of $\calA_g$.
\end{maincor}

For $g=5$, in \cite{fgsmv} it was proven that the Andreotti-Mayer divisor $N'_0$ (whose definition will be recalled in \Cref{sec:N0}) is the
unique effective divisor of minimal slope on $\Perf[5]$. Since we will show in \Cref{prop:cond} that $N'_0$ satisfies Condition \eqref{cond},
as a consequence of \Cref{cor:main} we obtain
\begin{maincor}{D}\label{cor:g=5}
The moving slope of $\calA_5$ is
bounded above by $s_\Mov(\Perf[5])\leq \tfrac{271}{35}$,
and the slope $271/35$ is achieved by a moving effective divisor.
\end{maincor}

In the following table, we collect what is thus known about the effective and moving slopes of
$\Perf$:
\begin{center}
\begin{tabular}{c|c|c}
 & $s_{\Eff}(\Perf)$ & $s_{\Mov}(\Perf)$ \\
\hline
$g=1$ & $12$  & \\
$g=2$ & $10$ & $12$ \\
$g=3$ & $9$ & $28/3=9.333\dots$ \\
$g=4$ & $8$  & $17/2=8.500\dots$ \\
$g=5$ & $54/7=7.714\dots$ & $\leq\,271/35=7.742\dots$ \\
$g=6$ & $[\frac{53}{10},\ 7]$ & (?) $\leq\,43/6=7.166\dots$\\
$g\gg 1$ & O(1/g) & O(1/g)
\end{tabular}
\end{center}
where the upper bound $s_\Eff(\Perf[6])\leq 7$ is provided by the Siegel modular form $\theta_{L,h,2}$ of class $14\lambda-2\delta$ constructed in \cite{mss}. The question mark in the above table marks a conjectural upper bound $s_\Mov(\Perf[6])\leq 43/6$, which
is a consequence of the following.

\begin{maincor}{E}\label{cor:genus6}
The form $\theta_{L,h,2}$ on $\calA_6$ is prime, i.e. not a product of non-constant Siegel modular forms.
Moreover, if $\theta_{L,h,2}$ satisfies Condition \eqref{cond},
then
\[
s_\Mov(\Perf[6])\leq \frac{43}{6}.
\]
\end{maincor}

The Torelli map $\tau_g:\calM_g\to\calA_g$ sending a curve to its Jacobian is an injection of coarse moduli spaces, but for $g\ge 3$ is 2-to-1 as a map of stacks. We denote by $\calJ_g$ the closure of $\tau_g(\calM_g)$ inside $\calA_g$, which is called the locus of Jacobians. For $g\leq 3$ $\calJ_g=\calA_g$, while $\calJ_4\subset\calA_4$ is the zero locus of the Schottky modular form $S_4$, which has weight $8$.
Since (even) theta constants always vanish on curves with even multiplicity, this implies that $\tnull\cap\calJ_g=2\Theta_{\nl}$ for $g\ge 3$, where $\Theta_{\nl}\subset\calJ_g$ is an integral divisor. As a byproduct of our analysis, we also obtain the following result on Jacobians:

\begin{maincor}{F}\label{cor:M4}
The form $\DIFF_{4,8}(S_4)$ restricts on $\calJ_4$ to $\Theta_{\nl}$.
\end{maincor}

Beyond these results, we investigate the applications of both Rankin-Cohen brackets and of differential operators acting on Siegel modular forms to constructing new effective divisors.
Our results above go essentially one step in this direction, by applying the differentiation technique to a modular form of lowest slope. This construction can be iterated or varied to apply it to a tuple of different modular forms: it would be interesting to investigate the collection of modular forms thus produced, and to see in particular if this sheds any further light on the structure of the ring of Siegel modular forms in any genus $g\ge 4$, where it is not fully known.

\subsection{Structure of the paper}
The paper is organized as follows. In \Cref{sec:modforms} we set the notation and review the relation between effective divisors on~$\calA_g$ and Siegel modular forms. In \Cref{sec:relevant} we recall the construction and the slopes of the theta-null divisor $\tnull$ and of the Andreotti-Mayer divisor $N'_0$, and we show that both satisfy Condition \eqref{cond}. In \Cref{sec:RC} we define the Rankin-Cohen bracket and prove a weaker version of \Cref{cor:main}. In \Cref{sec:lowgenus} we review the computation of the effective and moving slopes for $g\leq 4$, derive
Corollaries \ref{maincor:mov}-\ref{cor:g=5}-\ref{cor:genus6} from \Cref{thm:main}, and prove \Cref{cor:M4}. Finally, in \Cref{sec:diffop} we introduce
a remarkable class of differential operators acting on Siegel modular forms, we
define $\calD_{Q_{g,a}}$ and we prove \Cref{thm:main}.

\section*{Acknowledgements}
We are grateful to Gavril Farkas and Alessandro Verra for discussions related to \cite{fgsmv_published}, leading to the correction \cite{fgsmv} and to a better understanding of the intricacies of the geometry involved. We are indebted to Claudio Procesi for the proof of \Cref{procesi}. S.~Grushevsky thanks the Weizmann Institute of Science for hospitality in spring 2022, when this work was done and the paper was written.

%%%%%%%%%%%%%%%%%%%%%%%%%%%%%%%%%%%%%%

\section{Siegel modular forms and compactifications of $\calA_g$}\label{sec:modforms}
We briefly recall the standard notions on Siegel modular forms, referring to \cite{frei} for a more detailed introduction. Unless specified otherwise, we assume $g\geq 2$.

\subsection{The Siegel space and the moduli space of ppav}
The Siegel upper half-space $\HH_g$ is the space
of complex symmetric $g\times g$ matrices $\tau$
with positive definite imaginary part.

An element $\gamma$ of the symplectic group $\Sp(2g,\ZZ)$,
written as $\gamma=\left(\begin{smallmatrix}A & B \\ C & D \end{smallmatrix}\right)$ in $g\times g$ block form, acts on $\HH_g$ via
\[
\gamma\cdot \tau\coloneqq (A\tau+B)(C\tau+D)^{-1}.
\]
The action of $\Sp(2g,\ZZ)$ on $\HH_g$ is properly discontinuous, with finite
stabilizers. The quotient $\calA_g=\HH_g/\Sp(2g,\ZZ)$ is the moduli space of ppav ---
it is a quasi-projective variety that can be given the structure of an orbifold
(or a Deligne-Mumford stack).
We denote by $\pi:\calX_g\to\calA_g$ the universal family of ppav.
%which can be obtained as the quotient $\HH_g\times\CC^g/\Sp(2g,\ZZ)\rtimes\ZZ^{2g}$,
%where
%\[
%(\gamma,(\mm,\nn))\cdot (\tau,z):=(\gamma\cdot\tau, ((C\tau+D)^t)^{-1}z+\tau \mm+\nn)
%\]
%for every $\gamma=\left(\begin{smallmatrix}A & B \\ C & D \end{smallmatrix}\right)$
%and $\mm,\nn\in\ZZ^g$. The map $p:\calX_g\rightarrow\calA_g$ is induced
%by the projection $\tilde{p}:\HH_g\times\CC^g\rightarrow\HH^g$ onto the first factor.

\subsection{Divisors and Siegel modular forms}
A holomorphic   function $F:\HH_g\to\CC$ is called a holomorphic   {\em Siegel modular form of weight $k$} with respect to $\Sp(2g,\ZZ)$ if
\[
F(\gamma\cdot\tau)=\det(C\tau+D)^kF(\tau)
\]
for all $\tau\in\HH_g$ and for all $\gamma\in\Sp(2g,\ZZ)$
(for $g=1$ there is an additional regularity condition).

This automorphy property with respect to $\Sp(2g,\ZZ)$ defines the line bundle
\[
\calL^{\otimes k}\longrightarrow\calA_g
\]
of Siegel modular forms of weight $k$  on $\calA_g$.

\begin{rem}
While in our paper we focus on Siegel modular forms for $\Sp(2g,\ZZ)$, the holomorphic differential operator that we consider is defined for any holomorphic functions on~$\HH_g$, and will preserve suitable automorphy properties. It can thus also be applied to Siegel modular forms with  multiplier systems for subgroups of $\Sp(2g,\ZZ)$.  In  particular we will apply it to  a Siegel modular forms with a character, namely the theta-null $T_2$ in genus two, discussed in \Cref{sec:theta}.
\end{rem}

\subsection{Satake compactification}
The Satake-Baily-Borel compactification $\Sat$ can be defined as
\[
\Sat\coloneqq \mathbf{Proj}\left(\oplus_{n\geq 0} H^0(\calA_g,\calL^{\otimes n})\right).
\]
What this means is that sections of a sufficiently high power of $\calL$ embed $\calA_g$ into a projective space, and $\Sat$ is the closure of the image of $\calA_g$ under such an embedding. Since $\Pic_\QQ(\Perf)=\QQ\lambda$, where $\lambda$ denotes the class of $\calL$,
this implies that any effective $\ZZ$-divisor on $\calA_g$ is the zero locus of a Siegel modular form.

\subsection{Partial and perfect cone toroidal compactifications}
Set-theoretically, $\Sat$ is the union of locally closed strata
\[
\Sat=\calA_g\sqcup \calA_{g-1}\sqcup\dots\sqcup \calA_0.
\]
The partial (aka Mumford, or rank one) toroidal compactification
\[
\Part\coloneqq\calA_g\sqcup\pa\Part
\]
is obtained by blowing up the partial Satake compactification $\calA_g\sqcup\calA_{g-1}$
along its boundary $\calA_{g-1}$, and the exceptional divisor $\pa\Part$ can then be identified with $\calX_{g-1}/\pm 1$.

Any toroidal compactification contains $\Part$ and admits a blowdown morphism to $\Sat$. The perfect cone toroidal compactification $\Perf$ has the property that the complement $\Perf\setminus\Part$ is of codimension $2$ inside $\Perf$. The boundary
\[
D\coloneqq\pa \Perf
\]
is an irreducible Cartier divisor, which is the closure of $\pa\Part$. We denote by $p:\Perf\to\Sat$ the blowdown map.

\subsection{Effective divisors on $\calA_g$}
The effective and moving slope is computed on effective divisors in $\calA_g$,
or, equivalently, on effective divisors in $\Perf$,
whose support does not contain~$D$. We will call such divisors {\em internal}.
For clarity and completeness, we explain how to associate an internal divisor to a Siegel modular form.

A Siegel modular form $F$ of weight $a$, thought of as a section of $\calL^{\otimes a}$ on $\Sat$, can be pulled back to a section of $p^*\calL^{\otimes a}$ on~$\Perf$.
If the vanishing order $\ord_D(p^*F)$ of $p^*F$ along the divisor~$D$ is $b$,
this means that the zero locus of $p^*F$ on $\Perf$
is the union of an effective divisor not containing~$D$ in its support,
which we will denote by $(F)$ and call the zero divisor of the modular form, and of the divisor~$D$ with multiplicity $b$.
Since by definition the zero locus $\{ F=0\}\subset\Sat$ has class $a\lambda$, its preimage in $\Perf$ has class $ap^*\lambda$ (or $a\lambda$ in our notation abuse), it follows that the class of the zero divisor of a modular form is
\[
[(F)]=a\lambda-b\delta\in \Pic_\QQ(\Perf)
\]
with $a>0$ and $b\geq 0$.

To summarize the above discussion, we see that internal effective divisors on $\Perf$ correspond bijectively to Siegel modular forms up to multiplication by a constant, and from now on we will talk about them interchangeably, additionally suppressing the adjective ``internal'' as we will never need to deal with effective divisors on $\Perf$ whose support contains~$D$.

We thus define the slope $s(F)$ of a modular form $F$ to be the slope of the corresponding (internal) effective divisor $(F)$. We will write $F$ for the modular form considered on $\Perf$, and stress  that the notation $[F]\coloneqq [(F)]$ for the class of the zero divisor of a Siegel modular form on $\Perf$ does {\em not} signify the class of the pullback $p^*F$, which would be simply equal to $a\lambda$.

Every effective divisor $E\subset\Perf$ can be uniquely written as $E=\sum c_i E_i$
for suitable $c_i>0$ and pairwise distinct,
irreducible, reduced divisors $E_i$. We say that two divisors $E=\sum c_iE_i$ and $E'=\sum d_j E'_j$ have
distinct supports if $E_i\ne E'_j$ for all $i,j$.

Similarly, a Siegel modular form $F$
can be uniquely written as a product $F=\prod F_i^{c_i}$ for suitable
$c_j>0$ and pairwise distinct, {\em prime} Siegel modular forms $F_i$
(i.e.~forms that cannot be factored as products of non-constant modular
forms). Two modular forms $F=\prod F_i^{c_i}$
and $F'=\prod (F'_j)^{d_j}$ are said to {\em not have a common factor}
if $F_i\neq F'_j$ for all $i,j$.

\subsection{Fourier-Jacobi expansion}
The vanishing order of a Siegel modular form~$F$ at~$D$ can be computed using the Fourier-Jacobi expansion, which we  briefly recall for further use.
Writing an element $\tau\in\HH_g$ as
  $$\tau = \left( \begin{matrix} \tau' & z \\ z^t & w \end{matrix} \right) \in \HH_g$$
   with $ \tau' \in\HH_{g-1}, \,\,z\in\CC^{g-1}, \,\, w\in \CC^*$,
 and setting  $q\coloneqq \exp(2\pi i w)$, we expand~$F$ in power series in $q$:
\begin{equation}\label{F-J}
 F(\tau) = \sum_{r\geq 0} f_r(\tau', z) q^r.%= f_0(\tau',0)+\sum_{r>0} f_r(\tau', z) q^r.
 \end{equation}
Then the  vanishing order $\ord_DF$ (which we will often denote~$b$) of $F$ along~$D$ is detected by the Fourier-Jacobi expansion as
\begin{equation}
 \ord_D F=\min\{ r\geq 0\: f_r(\tau',z) \not\equiv 0\}\, .
\end{equation}
The form $F$ is called a {\em cusp form} if it vanishes identically on~$D$; equivalently, if $f_0(\tau',0)=0$, that is if $\ord_DF>0$.

\subsection{First properties of the moving slope}
Here we record some properties of the moving slope, showing that one should only focus on the case when there exists an effective divisor of minimal slope, and furthermore that one should only focus on irreducible effective divisors. These are general properties that we state for $\Perf$, but hold on any projective variety.

\begin{lm}\label{lm:properties}
The moving slope satisfies the following properties:
\begin{itemize}
\item[(i)]
if $E\neq E'$ are irreducible reduced effective divisors, then $$s_\Mov(\Perf)\leq \max\{s(E),s(E')\}\,;$$
\item[(ii)]
if $s_\Mov(\Perf)=s(E)$ for {\em some} moving divisor $E$, then
there exists an {\em irreducible} moving divisor $E'$
such that $s_\Mov(\Perf)=s(E')$;
\item[(iii)]
if there does {\em not} exist an effective divisor $E$ such that $s(E)=s_\Eff(\Perf)$, then $s_\Eff(\Perf)=s_\Mov(\Perf)$.
\end{itemize}
\end{lm}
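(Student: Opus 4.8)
The plan is to prove (i) directly by exhibiting a moving pencil, to deduce (iii) from (i) together with a convexity observation on the slopes of the irreducible components of a divisor, and to prove (ii) by a Bertini-type argument applied to the linear system of a minimal moving divisor. Throughout I use freely that $\lambda$ is the pullback $p^*$ of the ample class $[\calL]$ on $\Sat$, hence semiample, so that $|k\lambda|$ is base-point free for $k\gg 0$, and that $\Mov(\Perf)\subseteq\Eff(\Perf)$, so $s_\Eff(\Perf)\le s_\Mov(\Perf)$ always.

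For (i), write $[E]=a\lambda-b\delta$ and $[E']=a'\lambda-b'\delta$ with $a,a'>0$ and $b,b'\ge 0$, and assume without loss of generality $s(E)=a/b\le a'/b'=s(E')$ (if $b=0$ or $b'=0$ one slope is infinite and the assertion is trivial). The key algebraic observation is that $b'[E]$ and $b[E']$ share the $\delta$-coefficient $-bb'$, so
\[
b[E']=b'[E]+(a'b-ab')\lambda,\qquad a'b-ab'\ge 0.
\]
Choosing an effective divisor $\Lambda$ of class $(a'b-ab')\lambda$ whose support avoids the prime divisor $E'$ (possible since $|k\lambda|$ is base-point free for $k\gg0$, so a general member misses any fixed divisor), the divisors $D_1\coloneqq bE'$ and $D_2\coloneqq b'E+\Lambda$ lie in the common class $\gamma\coloneqq a'b\,\lambda-bb'\,\delta$ and have no common component (here I use $E\ne E'$ and that $\Lambda$ avoids $E'$). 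Passing to a multiple $m\gamma$ with $mD_1\sim mD_2$, the pencil spanned by $mD_1$ and $mD_2$ has base locus inside $\supp(D_1)\cap\supp(D_2)$, of codimension at least two; it is therefore a moving pencil of slope $s(\gamma)=a'/b'=\max\{s(E),s(E')\}$, which proves the claim.

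For (iii), I first record that every effective divisor $E=\sum c_iE_i$ has an irreducible component $E_i$ with $s(E_i)\le s(E)$: writing $[E_i]=a_i\lambda-b_i\delta$, the slope $s(E)=\tfrac{\sum c_ia_i}{\sum c_ib_i}$ is a weighted mediant of the $a_i/b_i$, and were $s(E_i)>s(E)$ for all $i$, summing the inequalities $b\,a_i>a\,b_i$ against the weights $c_i$ would give the contradiction $ab>ab$. Consequently the infimum of $s$ over irreducible divisors equals $s_\Eff(\Perf)$. Under the hypothesis of (iii) this infimum is not attained, so there is a strictly decreasing sequence of (necessarily distinct) irreducible divisors $E_1,E_2,\dots$ with $s(E_n)\downarrow s_\Eff(\Perf)$. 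Applying (i) to $E_n\ne E_{n+1}$ yields $s_\Mov(\Perf)\le s(E_n)$ for all $n$, hence $s_\Mov(\Perf)\le s_\Eff(\Perf)$, and combined with the reverse inequality the two slopes coincide.

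For (ii), let $E$ be moving with $s(E)=s_\Mov(\Perf)$ and consider $\phi_{|E|}\colon\Perf\dashrightarrow\PP^{h^0(E)-1}$, with image $V$ of dimension $\kappa\ge 1$. If $\kappa\ge 2$, then $|E|$ is not composed with a pencil and (being moving) has base locus of codimension at least two, so by Bertini's theorem in characteristic zero its general member $E'$ is irreducible and reduced; as $E'\in|E|$ it is a moving divisor with $s(E')=s(E)$, and we are done. The main obstacle is the case $\kappa=1$, where $|E|$ is composed with a pencil. Here I would resolve the indeterminacy of $\phi_{|E|}$ and take the Stein factorization, obtaining a fibration $\psi\colon\widetilde{\Perf}\to B$ over a smooth projective curve $B$ with connected, hence (for general fibers) irreducible, fibers; let $\Phi$ be the prime divisor on $\Perf$ given by a general fiber. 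Since $E$ is moving it has no fixed component, so a general member of $|E|$ is a sum of $d=\deg V$ such fibers and $[E]=d[\Phi]$ in $\Pic_\QQ(\Perf)$, whence $s(\Phi)=s(E)=s_\Mov(\Perf)$. Finally, for $m\gg 0$ the complete linear system $|mp|$ on the curve $B$ is base-point free by Riemann--Roch, so its pullback is base-point free away from the exceptional locus, and therefore $m\Phi$ has no fixed component and is moving, with $s(m\Phi)=s_\Mov(\Perf)$ and support the irreducible divisor $\Phi$. I would flag that in this case the resulting irreducible moving divisor $m\Phi$ need not be reduced (one may take $m=1$ precisely when $B\cong\PP^1$), which is why (ii) asserts irreducibility but not reducedness.
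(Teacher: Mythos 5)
Your proof is correct, and for parts (i) and (iii) it is essentially the paper's argument in different clothing. For (i) the paper puts $a'E$ (tacitly augmented by $(a'b-ab')$ copies of the boundary $D$ to match classes) into the single linear system $|aE'|$, whereas you match the $\delta$-coefficients instead and absorb the discrepancy $(a'b-ab')\lambda$ by a general member of a base-point-free multiple of $|\lambda|$; both routes produce two members with no common component, hence a pencil with base locus of codimension two, of slope $\max\{s(E),s(E')\}$. Your explicit mediant inequality in (iii) is exactly the step the paper uses silently when it ``replaces $E_n$ by the irreducible component of smallest slope.'' The genuine divergence is in (ii): the paper decomposes a general member $E_t=E_t^1+\dots+E_t^m$ and asserts that each piece ``belongs to a linear system whose base locus is contained in the base locus of $E_t$,'' which is precisely the delicate point when $|E|$ is composed with a pencil over a curve $B$ of positive genus --- there the individual fibers move only in an algebraic, not a linear, family. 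Your Bertini--Stein-factorization argument handles this honestly by passing to $m\Phi$, at the price of producing an irreducible but possibly non-reduced divisor, which is all that the statement of (ii) asks for. On $\Perf$ itself the issue is moot, since $h^{1,0}=0$ forces any such base curve to be $\PP^1$, but since the paper remarks that the lemma ``holds on any projective variety,'' your version of (ii) is the more robust one.
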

\begin{proof}
(i) Let $[E]=a\lambda-b\delta$ and $[E']=a'\lambda-b'\delta$, and suppose that
$s(E)\leq s(E')$.
Then the linear system $|aE'|$ contains $a'E$, and its base locus
is contained inside $E\cap E'$, which has codimension at least two.
It follows that $aE'$ is a moving divisor.
Since $[aE']=a(a'\lambda-b'\delta)$, we obtain
$s_\Mov(\Perf)\leq s(aE')=s(E')=a'/b'$.

(ii) If the general element of the linear system $|E|$ is irreducible,
then we can choose $E'$ to be any such element.
Otherwise a general element $E_t\in |E|$ can be written as a sum
$E_t=E^1_t+\dots+E^m_t$ of $m$ distinct effective divisors. Moreover each
$E^i_t$ belongs to a linear system whose base locus is contained in the base locus of $E_t$. Hence each $E^i_t$ is moving.
Since $s_\Mov(\Perf)\leq \min_i s(E^i_t)\leq s(E_t)=s_\Mov(\Perf)$,
we conclude that $s(E^i_t)=s_\Mov(\Perf)$ for all $i$.
Hence, it is enough to take $E'=E^i_t$ for any $i$.

(iii)
Consider a sequence $(E_n)$ of effective divisors on $\calA_g$ whose slopes are strictly decreasing and converging to $s_\Eff(\Perf)$.
Up to replacing $E_n$ by the irreducible component of $E_n$ with smallest slope,
and up to passing to a subsequence, we can assume that all $E_n$ are irreducible.
Since the slopes are strictly decreasing, the $E_n$ are all distinct.
Applying (i) to the pair $E_{n-1},E_n$, we have $s_\Mov(\Perf)\leq s(E_n)$.
The conclusion follows,
since $s_\Eff(\Perf)\leq s_\Mov(\Perf)\leq s(E_n)\rightarrow s_\Eff(\Perf)$.
\end{proof}

%%%%%%%%%%%%%%%%%%%%%%%%%%%%%%%%%%%%%%%

\section{Some relevant modular forms}\label{sec:relevant}
In this section we briefly recall the definitions and the main properties of theta constants,
of the Schottky form, and of Andreotti-Mayer divisors.

\subsection{Theta functions and theta constants}\label{sec:theta}
For $\ep,\de\in \{0,1\}^g$ the {\em theta function
with characteristic $\chars\ep\de$} is the function
$\tc\ep\de:\HH_g\times\CC^g\rightarrow\CC$
defined as
$$
\tc\ep\de(\tau,z)\coloneqq\sum\limits_{n\in\ZZ^g} \exp \pi i \left[\left(
n+\tfrac{\ep}{2}\right)^t\tau \left(n+\tfrac{\ep}{2}\right)+2\left(n+\tfrac{\ep}{2}\right)^t\left(z+
\tfrac{\de}{2}\right)\right].
$$

Characteristics $\chars\ep\de$ are called {\em even} or {\em odd} depending on
the parity of the standard scalar product $\langle\ep , \de\rangle$. This is the same as the parity of $\theta$ as a function of $z\in\CC^g$ for fixed $\tau\in\HH_g$, and there are $ 2^{g-1}( 2^{g}+1)$ even characteristics and $2^{g-1}( 2^{g}-1)$ odd ones.  The {\em theta constant} is the evaluation of the theta function at $z=0$, which is thus a function $\tc\ep\de(\tau):\HH_g\to\CC$. By the above, all odd theta constants vanish identically, while an even theta constants are modular form of weight $1/2$ (meaning that a suitable square root of the automorphic factor $\det(C\tau+D)$ is taken)
with respect to a certain finite index subgroup of $\Sp(2g,\ZZ)$. The product of all even theta constants
$$
  T_g\coloneqq\prod_{\chars\ep\de\, \even}\theta\chars\ep\de
$$
turns out to be a modular form for the full symplectic group, for $g\ge 3$, called the {\em theta-null} modular form, and its zero locus is called the theta-null divisor $\tnull$. It has class
\begin{equation}\label{eq:classTg}
  [T_g]=2^{g-2}(2^g+1)\lambda-2^{2g-5}\delta\,,\qquad \hbox{and so }\quad s(T_g)=s(\tnull)=8+2^{3-g}\,.
\end{equation}
The case $g=2$ is slightly different since $T_2$ has a character, meaning that it satisfies
\[
T_2(\gamma\cdot\tau)=\pm\det(C\tau+D)^5 T_2(\tau)
\]
for all $\gamma=\left(\begin{smallmatrix} A & B\\ C & D\end{smallmatrix}\right)\in
\Sp(4,\ZZ)$. Hence $T_2^2$ is a well-defined modular form
and we still have $[T_2]=5\lambda-\delta/2$ in $\Pic_\QQ(\Perf)$.

\subsection{The Schottky form}\label{sec:schottky}
The {\em Schottky form} is the weight $8$ modular form on $\calA_g$ given by the following degree 16 polynomial in theta constants:
\[
S_g\coloneqq  \frac{1}{2^g}\sum_{\ep,\de}\theta^{16}\chars\ep\de - \frac{1}{2^{2g}}\left(\sum_{\ep,\de}\theta^8\chars\ep\de\right)^2\,.
\]
The Schottky form is a modular form for $\Sp(2g,\ZZ)$, and is natural because it can alternatively be expressed as $S_g=\theta_{D_{16}^+}-\theta_{E8\oplus E8}$ as the difference of the lattice theta functions associated to the only two even, unimodular lattices in $\RR^{16}$ (see \cite{igusagen4} or \cite{igusachristoffel}). It is known that $S_g$ vanishes identically on $\calA_g$ if and only if $g\leq 3$, and moreover that the zero locus of $S_4$ is the locus of Jacobians $\calJ_4\subset\calA_4$. The form $S_4$ vanishes identically to first order along $D$, and thus
\begin{equation}
[S_4]=8\lambda-\delta\,,
\quad\text{and}\quad
s(S_4)=8\,,
\end{equation}
while for $g\ge 5$ the form $S_g$ is not a cusp form (and so it has infinite slope).

\subsection{Andreotti-Mayer divisor}\label{sec:N0}
The {\em Andreotti-Mayer divisor} \cite{anma} is defined to be the locus $N_0$ of ppav whose theta divisor is singular.

%In order to describe $N_0$ as a cycle, we lift the situation to the universal cover.

%Consider the universal theta divisor $\widetilde{\Theta}_g$ inside $\HH_g\times\CC^g$, namely the zero locus of $\theta(\tau,z)$.
%Following Mumford \cite{mum}, we denote by $\widetilde{\calS}\coloneqq\Sing_{\vert}\widetilde{\Theta}_g$ the locus of
%singular points of theta divisors of ppav, namely
% \begin{equation}
%\widetilde{\calS}= \left\lbrace (\tau,z)\in\HH_g\times{\bf C}^g\,:
%\,  \theta(\tau,z)=0
%\ \text{and}\ d_z\theta(\tau,z)=0
%\frac{\pa \theta}{\pa  z_i}(\tau,z)=0\ \text{for all} \ i=1,\dots,g
%\right\rbrace.
%\end{equation}
%(computationally, by an abuse of notation, we will often work locally on $\calS$, thinking of it
%as a locus inside the cover $\HH_g\times {\bf C}^g$ of $\calX_g$).

%It is known that $\widetilde{\calS}$ is the union of
%the locus $\widetilde{\calS}_{\nl}$ of even $2$-torsion points that lie on $\widetilde{T}_g$,
%the locus $\widetilde{\calS}_{\dec}$ of points on abelian varieties that are products
%of an abelian variety of dimension $g-1$ and an elliptic curve,
%and the closure $\widetilde{\calS}'$ of the locus of singular points
%of theta divisors of indecomposable ppav that are not $2$-torsion points.
%Moreover $\widetilde{\calS}$ pushes down via $\tilde{p}$ to a divisor $\widetilde{N}_0$ inside $\HH_g$:
%more precisely, $\widetilde{\calS}_{\nl}$ and $\widetilde{\calS}'$ push down to divisors $\widetilde{\theta}_{\nl}$
%and $\widetilde{N}'_0$ respectively, while the image of $\widetilde{\calS}_{\dec}$ has codimension $g-1$.

It is known that $N_0$ is a divisor that has for $g\ge 4$ precisely two irreducible components: $N_0=\tnull\cup N'_0$ (see \cite{mum},\cite{debarredecomposes}), while for $g=2,3$ the Andreotti-Mayer divisor is simply $N_0=\tnull$.

\begin{rem}\label{gen-sing}
For a generic point of $\tnull$, the unique singularity of the theta divisor of the corresponding ppav is the double point at the two-torsion point of the ppav corresponding to the characteristic of the vanishing theta constant. It is known that generically this singular point is an ordinary double point (i.e.~that the Hessian matrix, of the second derivatives of the theta function with respect to $z$ at this point is non-degenerate).
For a generic point of $N'_0$, the theta divisor of the corresponding ppav has precisely two opposite singular points, both of which are generically ordinary double points again, see \cite{gsmordertwo} for a detailed study.
\end{rem}

As we already know, $\tnull$ is the zero locus of the modular form $T_g$ that is the product of all even theta constants, and we know the class of the corresponding divisor by~\eqref{eq:classTg}. The modular form, which we denote $I_g$, defining the effective divisor $N_0'$ is not known explicitly for any $g\ge 5$ (see \cite{krsm}), while the Riemann theta singularity implies that in genus 4 we have $N'_0=\calJ_4$, and thus $I_4=S_4$. The class of the divisor $N'_0$ was computed by Mumford \cite{mum}:
%The generic point in $\widetilde{\calS}_{\nl}$ is an ordinary singularity, meaning that the rank of the hessian matrix in the $z$-variables of the theta function at that point is $g$. Also it can be shown that $\widetilde{\theta}_{\nl}$ is the push-forward of $\widetilde{\calS}_{\nl}$ as cycles.
%Similarly, for the general $\tau\in\widetilde{N}'_0$
%the theta divisor of the corresponding abelian variety has exactly two ordinary
%singular points, which are opposite and not $2$-torsion. For this reason
%the push-forward of $\widetilde{\calS}'$ is equal to $2\widetilde{N}'_0$ as cycles.
%
%The divisors $\widetilde{\theta}_{\nl}$ and $\widetilde{N}'_0$
%descend to $\tnull$ and
%to the divisor $N'_0$ inside $\calA_g$, both of which are known to be irreducible (see \cite{mum}, \cite{debarredecomposes}).
%
%Both $\tnull$ and $N'_0$ are zero loci of modular forms.
%We have already seen that $\tnull$ is the zero locus of the modular form $T_g$
%
%On the other hand, again by \cite{mum} the effective divisor $N'_0$
%is associated to a modular form $I_g$ that satisfies
\begin{equation}\label{eq:classAM}
[N'_0]=[I_g]=(g!(g+3)/4- 2^{g-3}(2^g +1)) \lambda - ((g+1)!/24   - 2^{2g-6}) \delta\,,
\end{equation}
\begin{equation*}
\text{and so}\quad s(I_g)=
6\cdot\frac{1+2/(g+1)- 2^{g-1}(2^g +1)/(g+1)!}{1   - 3\cdot 2^{2g-3}/(g+1)!}>6\, .
\end{equation*}

\smallskip
Even though an equation for $N'_0$ is not known, a precise description of its tangent space is provided by
the following Lemma, which is a special case of results proven in \cite{anma} (see also \cite{ACGH}).

\begin{lm}\label{AM}\label{lm:AM}
Let $Z$ be $\tnull$ or $N'_0$ and call $\widetilde{Z}$ its preimage in $\HH_g$.
For every general smooth point $\tau_0$ of $\widetilde{Z}$, and every
ordinary double point $z_0\in\CC^g$ of $\theta(\tau_0,\cdot)=0$,
the tangent space $T_{\tau_0}\widetilde{Z}$
has equation $d_\tau \theta(\tau_0,z_0)=0$
inside $T_{\tau_0}\HH_g$.
\end{lm}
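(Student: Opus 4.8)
The plan is to realize $\widetilde Z$ as the image of the universal singular‑point locus and to compute its tangent space by differentiating the two equations that cut out a singular point of the theta divisor. Writing $\theta=\tc00$ for the Riemann theta function, I would consider the incidence locus
\[
\widetilde S := \{(\tau,z)\in\HH_g\times\CC^g : \theta(\tau,z)=0,\ \pa_z\theta(\tau,z)=0\},
\]
together with the projection $p\colon\widetilde S\to\HH_g$. By \Cref{gen-sing}, over a general point $\tau_0$ of $\widetilde Z$ the singular points of the theta divisor are finitely many ordinary double points, so $p$ is generically finite onto $\widetilde Z$ and is étale near $(\tau_0,z_0)$. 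Hence any analytic curve $\tau(t)\subset\widetilde Z$ with $\tau(0)=\tau_0$ lifts to a curve $(\tau(t),z(t))\subset\widetilde S$ with $z(0)=z_0$. (Equivalently, since the Hessian $H:=\Hess_z\theta(\tau_0,z_0)$ is invertible by the ordinary‑double‑point hypothesis, the implicit function theorem lets me follow the critical point $z(t)$ of $\theta(\tau(t),\cdot)$ starting from $z_0$, and for $\tau(t)\in\widetilde Z$ this critical point remains a genuine singular point, so $\theta(\tau(t),z(t))\equiv 0$.)

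The first key step is the inclusion $T_{\tau_0}\widetilde Z\subseteq\{d_\tau\theta(\tau_0,z_0)=0\}$. Given $v\in T_{\tau_0}\widetilde Z$, I take such a lifted curve with $\dot\tau(0)=v$ and differentiate the identity $\theta(\tau(t),z(t))\equiv 0$ at $t=0$. By the chain rule this yields $d_\tau\theta(\tau_0,z_0)(v)+\pa_z\theta(\tau_0,z_0)\cdot\dot z(0)=0$, and since $z_0$ is a singular point the second term vanishes, leaving $d_\tau\theta(\tau_0,z_0)(v)=0$. Thus every tangent vector to $\widetilde Z$ lies in the locus cut out by $d_\tau\theta(\tau_0,z_0)$.

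The second key step is to check that $d_\tau\theta(\tau_0,z_0)$ is a \emph{nonzero} linear functional, so that its kernel is an actual hyperplane. Here I would invoke the heat equation $\pa_{z_i}\pa_{z_j}\theta = 2\pi i\,(1+\delta_{ij})\,\pa_{\tau_{ij}}\theta$, which identifies the $\tau$‑derivatives of $\theta$ with the entries of the Hessian. A short computation then gives
\[
d_\tau\theta(\tau_0,z_0)(\dot\tau)=\sum_{i\le j}\pa_{\tau_{ij}}\theta\cdot\dot\tau_{ij}=\tfrac{1}{4\pi i}\,\Tr\!\big(H\,\dot\tau\big),
\]
and since $z_0$ is an ordinary double point the Hessian $H$ is non‑degenerate, in particular $H\neq 0$, so this functional does not vanish identically.

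Finally I combine the two steps: $T_{\tau_0}\widetilde Z$ and $\{d_\tau\theta(\tau_0,z_0)=0\}$ are both hyperplanes in $T_{\tau_0}\HH_g$ — the former because $\widetilde Z$ is a divisor and $\tau_0$ a smooth point, the latter by the non‑vanishing just established — and one is contained in the other, so they coincide. I expect the only delicate point to be the lifting step: one must ensure that along a curve inside $\widetilde Z$ the traced critical point $z(t)$ stays a point of the theta divisor (rather than merely a critical point of $\theta$), which is exactly what \Cref{gen-sing} together with the generic finiteness of $p$ guarantees at a general smooth point. The heat‑equation computation and the dimension‑count conclusion are then routine.
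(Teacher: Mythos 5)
Your proof is correct, but there is nothing in the paper to compare it against: the authors do not prove \Cref{lm:AM}, they state it as a special case of results of Andreotti--Mayer (citing also ACGH). What you have written is essentially that classical computation, and it is sound; in particular the heat-equation identification of $d_\tau\theta(\tau_0,z_0)$ with the Hessian $H$ (your constant $\tfrac{1}{4\pi i}\Tr(H\dot\tau)$ is correct) shows the functional is nonzero precisely because $H$ is nondegenerate, and the hyperplane dimension count finishes the job. The one place you are rightly cautious --- that the critical point $z(t)$ traced by the implicit function theorem along a curve $\tau(t)\subset\widetilde Z$ actually stays on the theta divisor --- needs more than ``generic finiteness of $p$'': a priori the singular point of $\Theta_{\tau(t)}$ could sit far from $z_0$, and ruling that out uses properness of the universal singular locus over $N_0$ (or the explicit description in \Cref{gen-sing}). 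You can sidestep this entirely by running the containment the other way: with $z(\tau)$ the critical point produced by the implicit function theorem, set $f(\tau)\coloneqq\theta(\tau,z(\tau))$ on a small neighborhood $U$ of $\tau_0$. Then $\{f=0\}$ is automatically contained in the preimage of $N_0$, which near a general $\tau_0$ coincides with the smooth connected hypersurface $\widetilde Z\cap U$; since $f\not\equiv 0$ (as $N_0\neq\calA_g$), the nonempty hypersurface $\{f=0\}$ must equal $\widetilde Z\cap U$ as a set. Differentiating $f$ and using $\pa_z\theta(\tau_0,z_0)=0$ then gives $T_{\tau_0}\widetilde Z\subseteq\ker d_\tau\theta(\tau_0,z_0)$ with no lifting of curves needed, and your second step concludes as written.
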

Since theta functions satisfy the heat equation
\begin{equation}\label{eq:heat}
    d_\tau\theta=2\pi i\cdot\Hess_z\theta \, ,
\end{equation}
where $\Hess_z$ denotes the Hessian, that is the matrix of the second partial derivatives of the theta function with respect to $z_1,\dots,z_g$,
by \Cref{lm:AM}
the differentials $dT_g$ and $dI_g$ are related to the Hessian of the theta function in the $z$-variables.
We then have the following
\begin{prop}\label{prop:cond}
The form $T_g$  for $g\ge 2$  and the form $I_g$ for $g\ge 4$ satisfy Condition \eqref{cond}.
\end{prop}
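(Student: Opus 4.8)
The plan is to prove both assertions at once, using the description of the tangent spaces of $\tnull$ and $N'_0$ supplied by \Cref{lm:AM}, together with the heat equation \eqref{eq:heat} and the fact (\Cref{gen-sing}) that at a generic point the singularity of the theta divisor is an ordinary double point. Write $F$ for either $T_g$ (with $g\ge 2$) or $I_g$ (with $g\ge 4$), and let $Z=\tnull$ or $N'_0$ be the corresponding irreducible reduced zero divisor, with preimage $\widetilde Z=\{F=0\}\subset\HH_g$. Since Condition \eqref{cond} asks that $\det(\pa F)$ not vanish identically on any irreducible component of $\{F=0\}$, and since $\tnull$ and $N'_0$ are each irreducible (the even characteristics form a single $\Sp(2g,\ZZ)$-orbit, while for $N'_0$ irreducibility is part of the decomposition of $N_0$ into its two components), it will suffice to produce a single point of $Z$ at which $\det(\pa F)\neq 0$.

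First I would fix a general smooth point $\tau_0$ of $\widetilde Z$. Because $F$ is a reduced defining function of $\widetilde Z$, it vanishes to order exactly one there, so $dF(\tau_0)\neq 0$ and the cotangent vector $dF(\tau_0)$ cuts out the hyperplane $T_{\tau_0}\widetilde Z$. By \Cref{gen-sing} the theta divisor at $\tau_0$ carries an ordinary double point $z_0$, and \Cref{lm:AM} identifies $T_{\tau_0}\widetilde Z$ with the kernel of the cotangent vector $d_\tau\theta(\tau_0,z_0)$. Two nonzero linear functionals with the same kernel are proportional, so $dF(\tau_0)=c\cdot d_\tau\theta(\tau_0,z_0)$ for some $c\in\CC^*$.

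Next I would pass from cotangent vectors to the symmetric matrices representing them under the trace pairing on $T_{\tau_0}\HH_g$: by construction $\pa F$ is precisely the representative of $dF$ under this pairing, and the assignment is linear and injective, so the proportionality above upgrades to the matrix identity $\pa F(\tau_0)=c\cdot\big(\text{matrix of } d_\tau\theta(\tau_0,z_0)\big)$. By the heat equation \eqref{eq:heat} that matrix equals $2\pi i\,\Hess_z\theta(\tau_0,z_0)$, so $\pa F(\tau_0)=2\pi i\,c\,\Hess_z\theta(\tau_0,z_0)$. Taking determinants,
\[
\det\big(\pa F(\tau_0)\big)=(2\pi i\,c)^g\det\big(\Hess_z\theta(\tau_0,z_0)\big),
\]
and the right-hand side is nonzero because $z_0$ is an ordinary double point, i.e.\ $\Hess_z\theta(\tau_0,z_0)$ is non-degenerate (\Cref{gen-sing}). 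Thus $\det(\pa F)\not\equiv 0$ on $Z$, which proves Condition \eqref{cond} for both $T_g$ and $I_g$.

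I expect the delicate points to be bookkeeping rather than conceptual. The main thing to nail down is the identification of $\pa F$ with the trace-pairing representative of $dF$, so that proportionality of functionals becomes proportionality of matrices and the normalization in \eqref{eq:heat} is respected. For $F=T_g$ one must additionally check reducedness, which is where the product structure enters: at a general point of $\tnull$ exactly one even theta constant $\tc{\ep}{\de}$ vanishes, and only to first order, so $dT_g$ is a nonzero multiple of $d\tc{\ep}{\de}$ there, guaranteeing $dF(\tau_0)\neq 0$ so that \Cref{lm:AM} applies. The external inputs on which this uniform argument rests are the irreducibility of $\tnull$ and $N'_0$ and the genericity of the ordinary-double-point condition recorded in \Cref{gen-sing}.
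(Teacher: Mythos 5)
Your proof is correct and follows essentially the same route as the paper's: both deduce from \Cref{lm:AM} and the heat equation \eqref{eq:heat} that $\det(\pa F)$ at a generic point of the divisor is a nonzero multiple of $\det(\Hess_z\theta)$ at the singular point, and then conclude from \Cref{gen-sing} that this Hessian is nondegenerate. You simply spell out more of the intermediate bookkeeping (proportionality of functionals with the same kernel, reducedness of $T_g$) than the paper does.
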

The genus restrictions in this statement are simply to ensure that the forms are well-defined and not identically zero.
\begin{proof}
If $\tau_0$ is a smooth point of $\tnull$, then the theta divisor $\Theta_{\tau_0}\subset X_{\tau_0}=\CC^g/(\ZZ^g\oplus\tau_0\ZZ^g)$ is singular at a unique $2$-torsion point,
and such a singularity is ordinary if and only if $\det(dT_g)\neq 0$ at $\tau_0$
by \eqref{eq:heat} and \Cref{lm:AM}.

Similarly, if $\tau_0$ is a generic point of $N'_0$, then
the singular locus of $\Theta_{\tau_0}$ consists of
two opposite non-$2$-torsion singular points $\pm z_0$;
moreover, $\pm z_0$ are ordinary double points of $\Theta_{\tau_0}$ if and only if $\det(dI_g)\neq 0$ at $\tau_0$ by \eqref{eq:heat} and \Cref{lm:AM}.

The conclusion follows from \Cref{gen-sing}.
\end{proof}

\section{Rankin-Cohen bracket}\label{sec:RC}
Our method to bound the moving slope of $\calA_g$ from above is by constructing new Siegel modular forms starting from a given known modular form. For example, starting from the known Siegel modular form minimizing the slope of the effective cone, we will try to construct another Siegel modular form, with which it has no common factor, and which has a slightly higher slope. In this section we do this using the Rankin-Cohen bracket (of two different modular forms), which will allow us to prove the main application \Cref{cor:main}, but only under the assumption that the moving slope is achieved.

While our construction of the differential operators $\DIFF_{g,a}$ in \Cref{thm:main} yields a stronger result, we now give the details of the geometrically motivated construction using the Rankin-Cohen brackets. These were defined in \cite{rankin} and \cite{cohen} for $g=1$ (see also \cite{zagier}); a vector-valued version appears in \cite{satoh} and a scalar-valued version appears in \cite{yangyin}.

For further use, we define the symmetric $g\times g$ matrix-valued holomorphic differential operator acting on functions on~$\HH_g$
\begin{equation}\label{eq:padefined}
\pa_{\tau}\coloneqq \left(\frac{1+\delta_{ij}}{2}\frac{\pa}{\pa \tau_{ij}}\right)_{1\leq i,j\leq g}\,.
\end{equation}
When no confusion is possible, we will sometimes denote this differential operator simply by~$\pa$.

\subsection{Vector-valued bracket}
Let $F$ and $G$ be genus $g$ Siegel modular forms of weights $k$ and $h$ respectively.
\begin{df}[\cite{satoh}]
The {\em vector-valued Rankin-Cohen bracket} of $F$ and $G$ is
\[
\{F,G\}\coloneqq \frac{G^{k+1}}{F^{h-1}} \cdot d\left(\frac{F^h}{G^k}\right).
\]
where $d=d_\tau$ is the differential of a function of $\tau\in\HH_g$.
\end{df}

\begin{lm}\label{lemma:bracket}
The vector-valued bracket
\[
\{F,G\}=-\{G,F\}=hG\,dF-kF\,dG
\]
is a $\calL^{\otimes(h+k)}$-valued holomorphic $(1,0)$-form on $\calA_g$.
Moreover $\{F,G\}\not\equiv 0$ unless $F^h$ and $G^k$ are constant multiples of each other.
\end{lm}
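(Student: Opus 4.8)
The plan is to prove the three assertions in turn, exploiting the two equivalent descriptions of the bracket. First I would verify the explicit formula $\{F,G\}=hG\,dF-kF\,dG$ by a direct application of the Leibniz and quotient rules to the defining expression: writing $d\!\left(F^h/G^k\right)=\left(hF^{h-1}G^k\,dF-kF^hG^{k-1}\,dG\right)/G^{2k}$ and multiplying by $G^{k+1}/F^{h-1}$, the powers of $F$ and $G$ collapse and leave exactly $hG\,dF-kF\,dG$. Antisymmetry $\{F,G\}=-\{G,F\}$ is then immediate from this formula, since interchanging the roles of $(F,k)$ and $(G,h)$ negates it.

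Next I would establish the automorphy and holomorphicity, which form the heart of the statement. Writing $j=j(\gamma,\tau)\coloneqq\det(C\tau+D)$ for $\gamma=\left(\begin{smallmatrix}A&B\\ C&D\end{smallmatrix}\right)\in\Sp(2g,\ZZ)$, the key input is that pullback commutes with the exterior derivative, so that $\gamma^*(dF)=d(F\circ\gamma)=d(j^kF)=j^k\,dF+k\,j^{k-1}F\,dj$, and similarly $\gamma^*(dG)=j^h\,dG+h\,j^{h-1}G\,dj$. Substituting these, together with $\gamma^*F=j^kF$ and $\gamma^*G=j^hG$, into $hG\,dF-kF\,dG$, one finds that the two anomalous terms proportional to $FG\,dj$ appear with opposite coefficients $\pm hk\,j^{h+k-1}$ and thus cancel. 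What remains is $\gamma^*\{F,G\}=j^{h+k}\left(hG\,dF-kF\,dG\right)=\det(C\tau+D)^{h+k}\{F,G\}$, which is exactly the transformation law characterizing a section of $\Omega^1_{\calA_g}\otimes\calL^{\otimes(h+k)}$, i.e.\ an $\calL^{\otimes(h+k)}$-valued $(1,0)$-form. Since $\{F,G\}$ is assembled from holomorphic functions and differentials of holomorphic functions, it is automatically holomorphic and of type $(1,0)$, completing this part.

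Finally, for the non-vanishing I would return to the original quotient form. The prefactor $G^{k+1}/F^{h-1}$ is a nonzero meromorphic function on $\HH_g$, so $\{F,G\}\equiv 0$ if and only if $d\!\left(F^h/G^k\right)\equiv 0$. As the complement in $\HH_g$ of the zero divisor of $G$ is connected, a meromorphic function with vanishing differential there is constant, whence $F^h=c\,G^k$ for some constant $c$; conversely, this clearly forces $\{F,G\}\equiv 0$. The only step requiring genuine care is the automorphy computation: the cancellation of the $dj$ terms is precisely the design feature of the Rankin–Cohen bracket, and it is there that the specific weighting by $h$ and $k$ is essential, whereas the remaining manipulations are routine.
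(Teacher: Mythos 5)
Your proposal is correct. The only point where you and the paper genuinely diverge is in how the automorphy of $\{F,G\}$ is established: the paper argues structurally, observing that $F^h/G^k$ is an invariant meromorphic function (so $d(F^h/G^k)$ is a meromorphic $(1,0)$-form on $\calA_g$) and that the prefactor $G^{k+1}/F^{h-1}$ is a meromorphic modular form of weight $h+k$, so the product is a meromorphic $\calL^{\otimes(h+k)}$-valued form whose holomorphy is then read off from the explicit expression $hG\,dF-kF\,dG$; you instead verify the cocycle identity $\gamma^*\{F,G\}=\det(C\tau+D)^{h+k}\{F,G\}$ by direct substitution, checking the cancellation of the two $FG\,dj$ terms. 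Your route is more computational and self-contained (it makes visible exactly why the weights $h$ and $k$ must appear as they do), while the paper's is shorter and explains conceptually why the bracket has weight $h+k$ without any cancellation to track; both then conclude the non-vanishing statement identically, via constancy of $F^h/G^k$. Your explicit remark that the complement of the zero divisor of $G$ in $\HH_g$ is connected is a small point the paper leaves implicit, and it is correct.
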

\begin{proof}
Since $F^h/G^k$ is a meromorphic function on $\HH_g$, its differential is a meromorphic $(1,0)$-form. Moreover, $G^{k+1}/F^{h-1}$ is a meromorphic Siegel modular form of weight $h+k$ (i.e.~it is a meromorphic function on $\HH_g$ that satisfies the transformation property).
It is immediate to check that $\{F,G\}=hG\,dF-kF\,dG$,
which shows that $\{F,G\}$ is thus a holomorphic Siegel-modular-form-valued $(1,0)$ form. Since $F$ and $G$ are non-zero, the bracket vanishes identically if and only if $d(F^k/G^h)$ is identically zero, which is equivalent to this ratio being a constant.
\end{proof}

Another way to state \Cref{lemma:bracket} is that, writing $\{F,G\}$ as a
$g\times g$ matrix, this matrix satisfies the transformation law
\[
\{F,G\}( \gamma\cdot\tau)=  \det(C\tau+D)^{k+h} (C\tau+D)^t  \cdot\{F,G\}(\tau)\cdot (C\tau+D)
\]
for any $\gamma=\left(\begin{smallmatrix}A & B \\ C & D \end{smallmatrix}\right)$
in $\Sp(2g,\ZZ)$.

\subsection{Scalar-valued bracket}
Let $\EE\to\calA_g$ denote the holomorphic rank~$g$ Hodge bundle of $(1,0)$-holomorphic forms
on ppav, namely $\EE=\pi_*\Omega^{1,0}_\pi$ (where we recall that $\pi:\calX_g\to\calA_g$ denotes the universal family of ppav).
Recall that the cotangent space
$T^*\calA_g$ can be identified with $\Sym^2\EE\subset \Hom(\EE^\vee,\EE)$.
Since
$$
\det:\Hom(\EE^\vee,\EE)\to (\det\EE)^{\otimes 2}\subset
\Lambda^g(\Sym^2\EE)\cong\Omega^{g,0}\calA_g
$$
and $\det\EE\cong\calL$, it follows that $\det$ restricts
to a map $\det:T^*\calA_g\to \calL^{\otimes 2}$, which is homogeneous of degree $g$.
If $f$ is a meromorphic function defined on $\calA_g$,
then $\det(df)$ is a meromorphic section of $\calL^{\otimes 2}$.

\begin{df}
The {\em scalar Rankin-Cohen bracket} of  Siegel modular forms $F,G$ is defined as
\[
[F,G]\coloneqq\det\{F,G\}\, .
\]
\end{df}

The scalar Rankin-Cohen bracket
seems not to have been systematically studied in the literature.
Here we collect some of its basic properties.

\begin{lm}\label{lm:scalar-bracket}
Let $F,G$ be Siegel modular forms, of classes
$$
 [F]=k\lambda-x\delta;\qquad [G]=h\lambda-y\delta.
$$
Then $[F,G]$ is a Siegel modular form of class
$$\Big[[F,G]\Big]=g(k+h)\lambda-\beta\delta\,,$$
 where
\begin{itemize}
\item[(i)] $\beta> 0$ (i.e.~$[F,G]$ is a cusp form, even if $F$ and $G$ are not);
\item[(ii)] $\beta\ge g(x+y)$;
\item[(iii)] for any integer $n>0$, $[F,F^n]=0$;
\item[(iv)] if $H$ is another modular form, then $[H^2 F,G]$ and $[HF,HG]$ are divisible by $H^g$;
\item[(v)] if $F,G$ do not have any common factors, and $F$ satisfies Condition \eqref{cond}, then~$F$ and $[F,G]$ do not have any common factors.
\end{itemize}
\end{lm}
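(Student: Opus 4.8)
\section*{Proof proposal}

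The overall plan is to exploit the closed formula $\{F,G\}=hG\,dF-kF\,dG$ of \Cref{lemma:bracket} together with the Fourier-Jacobi expansion \eqref{F-J}, treating the two purely algebraic claims (iii)--(iv) first and the boundary claims (i)--(ii) and the factorization claim (v) afterwards. The $\lambda$-coefficient of $\big[[F,G]\big]$ is immediate: applying $\det$ to the transformation law recorded right after \Cref{lemma:bracket} raises the automorphy factor $\det(C\tau+D)$ to the power $g(k+h)+2$, so $[F,G]$ is a modular form of that weight. For (iii), substituting $G=F^n$ (of weight $h=nk$) gives $\{F,F^n\}=nkF^n\,dF-kF\cdot nF^{n-1}\,dF=0$, hence $[F,F^n]=\det 0=0$; this is also the equality case $F^h=G^k$ of \Cref{lemma:bracket}. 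For (iv), expanding $d(HF)=F\,dH+H\,dF$ and $d(HG)=G\,dH+H\,dG$ and collecting terms, a direct computation shows that each of $\{HF,HG\}$ and $\{H^2F,G\}$ equals $H$ times a matrix of holomorphic sections, so taking the $g\times g$ determinant yields a factor $H^g$.

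The heart of the argument is the boundary behaviour. Set $M\coloneqq\{F,G\}=hG\,\pa F-kF\,\pa G$ and estimate $\ord_D$ of its entries through \eqref{F-J}. Each operator $\pa/\pa\tau_{ij}$ preserves or raises the $q$-order, so every entry of $\pa F$ vanishes to order at least $x=\ord_D F$ and every entry of $\pa G$ to order at least $y=\ord_D G$; multiplying by $G$ resp.\ $F$ gives $\ord_D M_{ij}\ge x+y$ for all $i,j$, whence $\ord_D\det M\ge g(x+y)$, which is (ii). For (i) I must produce strictly positive order even when $x=y=0$. The key observation is that the last row and column of $M$ vanish identically on $D$: the bottom-right entry uses $\pa/\pa\tau_{gg}=\pa/\pa w$, which annihilates the $q^0$-term, while the other last-column entries use $\pa/\pa z_i$ and the leading Fourier-Jacobi coefficient $f_0$, a Jacobi form of index $0$, is independent of $z$, so $\pa f_0/\pa z_i\equiv 0$. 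Hence the matrix $M^{(0)}$ of $q^0$-coefficients has a zero last column, so $\det M^{(0)}=0$ and $\beta=\ord_D[F,G]\ge 1$.

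Finally, for (v) I would argue component by component. Let $V=\{F_i=0\}$ be an irreducible component of $\{F=0\}$, cut out by a prime factor $F_i$ of $F$. Restricting $M$ to (the preimage of) $V$ and using $F|_V\equiv 0$ gives $\{F,G\}|_V=hG|_V\cdot\pa F|_V$, so
\[
[F,G]|_V=\det(hG\,\pa F)|_V=h^g\,(G|_V)^g\,\det(\pa F)|_V.
\]
Now $G|_V\not\equiv 0$, since otherwise $F_i\mid G$ would be a common factor of $F$ and $G$; and $\det(\pa F)|_V\not\equiv 0$ is exactly Condition \eqref{cond} applied to the component $V$. As $V$ is integral, the product of these nonzero sections is nonzero, so $[F,G]$ does not vanish on $V$. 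Running over all components $V$ shows that $F$ and $[F,G]$ share no prime factor.

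The main obstacle is isolating the extra vanishing required for (i): the crude order count only yields $\beta\ge g(x+y)$, which is vacuous when $F$ and $G$ are not cusp forms, and one genuinely needs the structural input that the index-$0$ Fourier-Jacobi coefficient is $z$-independent, equivalently that the last row and column of the bracket matrix degenerate along $D$. Everything else reduces to a determinant identity or a restriction argument.
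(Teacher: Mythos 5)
Your proposal is correct and follows essentially the same route as the paper: the closed formula $\{F,G\}=hG\,dF-kF\,dG$, the Fourier--Jacobi expansion (with the $z$-independence of the index-zero coefficient forcing the last column of the bracket matrix to be divisible by $q$) for (i)--(ii), direct computation for (iii)--(iv), and the restriction $[F,G]|_{F=0}=h^gG^g\det(\pa F)$ component by component for (v). Your weight $g(k+h)+2$ matches the paper's own proof and its later applications (e.g.\ $[[T_2,\chi_{12}]]=36\lambda-3\delta$), the ``$+2$'' having evidently been dropped from the displayed class in the lemma statement.
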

\begin{proof}
(i) Recall that $\{F,G\}=(G^{k+1}/F^{h-1})\cdot d\left(F^h/G^k\right)$
and so $\det\{F,G\}=G^{g(k+1)}/F^{g(h-1)}\det(d\left(F^h/G^k\right))$.
It follows that $\det\{F,G\}$ is a modular form of weight
$gh(k+1)-gk(h-1)+2=g(h+k)+2$ and, from the local expression of $\{F,G\}$,
it follows that $[F,G]$ is holomorphic.

Consider then the Fourier-Jacobi expansions
\[
F(\tau)=F_0(\tau',0)+\sum_{r>0}F_r(\tau',z)q^r,
\quad
G(\tau)=G_0(\tau',0)+\sum_{r>0}G_r(\tau',z)q^r,
\]
at $\tau=\left(\begin{smallmatrix}\tau' & z \\ z^t & w \end{smallmatrix}\right)$.
We have
\[
dF=\left(
\begin{array}{cc}
d_{\tau'}F & d_z F\\
(d_z F)^t & d_w F
\end{array}
\right),\quad
dG=\left(
\begin{array}{cc}
d_{\tau'}G & d_z G\\
(d_z G)^t & d_w G
\end{array}
\right).
\]
Recall that $q=\exp(2\pi iw)$, so that $\pa  (q^r)/\pa  w=2\pi r i q^r$.
It is immediate to check that the last columns of $dF$ and $dG$ are divisible by $q$.
It follows that $[F,G]$ is divisible by $q$, and so is a cusp form.

(ii) Writing $dF$ and $dG$ as above, it is immediate
that $\ord_D dF=\ord_D F$ and $\ord_D dG=\ord_D G$.
Hence $\ord_D \{F,G\}=\ord_D F+\ord_D G$, and the conclusion follows.

(iii) By direct computation $\{F,F^n\}=(nk) F^n dF-kF (n F^{n-1})dF=0$\,.

(iv) Let $\ell$ be the weight of $H$; we compute directly
$$
\begin{aligned}
\{H^2 F,G\}&=hG(H^2 \,dF+2HF\,dH)-(2\ell+k)H^2 F \,dG\\ &=
H(hHG\, dF+2hF\,dH-(2\ell+k)HF \,dG)
\end{aligned}
$$
and
$$
\begin{aligned}
\{HF,HG\}&=(\ell+h)HG(H\,dF+F\,dH)-(\ell+k)HF(H\,dG+G\,dH)\\ &=
H(H\{F,G\}+\ell H(G\,dF-F\,dG)+(h-k)FG\,dH)\,.
\end{aligned}
$$

(v)
Note first that Condition \eqref{cond} implies that $F$ is square-free.
Evaluating $[F,G]$ along the zero divisor of $F$, we obtain
\begin{equation}\label{eq:FGwhereF=0}
[F,G]|_{F=0}=h^gG^g\det(\pa  F)\,.
\end{equation}
Since $F$ and $G$ do not have common factors,
$[F,G]$ is identically zero
along a component of $\{F=0\}$ if and only if $\det(\pa  F)$ is.
\end{proof}
\begin{rem}
It is possible that the strict inequality $\beta>g(x+y)$ holds in (ii) above: for example,
(i) implies that $\beta\geq 1$ for $x=y=0$.
\end{rem}

\begin{rem}\label{rem:intrinsic}
Statement (v) above is one instance where we see the key importance of Condition ~\eqref{cond}, and of $\det(\pa F)$.
A more intrinsic description of the function $\det(\pa  F)$ is as follows.
If $F$ is a modular form of weight $k$,
its differential is not well-defined on $\calA_g$,
but the restriction of $dF$ to the zero divisor $E=\{F=0\}$ of $F$ is.
Thus $dF|_E$ is a section
of $\calL^{\otimes k}\otimes \Sym^2\EE|_E$, and
$\det(dF)$ is a section of $\calL^{\otimes (kg+2)}|_E$.
In other words, the restriction of $\det(\pa F)$ to
the zero locus of $F$ behaves as a modular form of weight $gk+2$, as mentioned in the introduction.
\end{rem}

\subsection{The bracket and the moving slope}\label{sec:bra-mov}
In this section we apply the scalar Rankin-Cohen bracket
to two modular forms of low slope in order to produce another modular form
of low slope. This will allow us to prove the following weaker version of \Cref{cor:main} --- it is
weaker only in that it assumes that the moving slope is achieved, i.e.~is a minimum rather than infimum.
\begin{prop}\label{prop11}
Assume that the effective slope $s_\Eff(\Perf)=a/b$ is realized by a Siegel modular form $F$
of class $a\lambda-b\delta$ that satisfies Condition \eqref{cond}.
Suppose moreover that the moving slope $s_\Mov(\Perf)=a'/b'$ is achieved by a Siegel modular form $G$
of class $a'\lambda-b'\delta$.
Then
\[
s_\Mov(\Perf)\leq s_\Eff(\Perf)+\frac{2}{bg}\, .
\]
\end{prop}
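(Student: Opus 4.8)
The plan is to feed $F$ and $G$ into the scalar Rankin--Cohen bracket and then read off the bound from \Cref{lm:properties}(i), closing with a short self-improving algebraic step. First I would normalize the data: by \Cref{lm:properties}(ii) I may take $G$ irreducible, and as recorded after \Cref{cor:main} I may take the zero divisor $(F)$ irreducible; Condition~\eqref{cond} moreover forces $F$ to be square-free, so $(F)$ is reduced. If $(G)=(F)$, then $(F)$ is itself moving and $s_\Mov(\Perf)=s_\Eff(\Perf)$, making the claim trivial, so I may assume $(G)\neq (F)$; since both divisors are irreducible this means $F$ and $G$ share no common factor. Note also $b'>0$, since $s_\Mov(\Perf)=a'/b'$ is finite, and that the hypothesis that the moving slope is \emph{achieved} is used precisely to supply a genuine form $G$ of class $a'\lambda-b'\delta$ with $a'/b'=s_\Mov(\Perf)$ to plug into the bracket.

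Next I would set $H\coloneqq [F,G]$. By \Cref{lm:scalar-bracket} this is a cusp form whose weight, as computed in the proof of part~(i), equals $g(a+a')+2$ (the extra $+2$ coming from $\det$ landing in $\calL^{\otimes 2}$, consistently with \Cref{rem:intrinsic}); thus $[H]=(g(a+a')+2)\lambda-\beta\delta$ with $\beta\ge g(b+b')$ by part~(ii). Since $F$ and $G$ are coprime and $F$ satisfies~\eqref{cond}, the restriction formula \eqref{eq:FGwhereF=0} gives $H|_{(F)}=(a')^gG^g\det(\pa F)\not\equiv 0$, so $H\not\equiv 0$; and by part~(v) the forms $F$ and $H$ have no common factor, whence no irreducible component of $(H)$ equals $(F)$.

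Now let $E'$ be an irreducible component of $(H)$ of minimal slope. Because the total slope of an effective divisor is a weighted average (a mediant) of the slopes of its components, and $H$ is a cusp form so that at least one component has positive boundary order, I get $s(E')\le s(H)$ with $s(E')$ finite. As $E'\neq (F)$, \Cref{lm:properties}(i) applied to $(F)$ and $E'$ yields $s_\Mov(\Perf)\le \max\{s_\Eff(\Perf),s(E')\}$; both entries are $\le s(H)$ (the first because $H$ is effective, so $s(H)\ge s_\Eff(\Perf)$), and therefore $s_\Mov(\Perf)\le s(H)\le \frac{g(a+a')+2}{g(b+b')}$. Note this route bounds $s_\Mov(\Perf)$ by the total slope of $H$ via \Cref{lm:properties}(i), so I never need to prove that $H$ itself is a moving divisor.

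The final, decisive step is purely algebraic. The inequality $s_\Mov(\Perf)\le \frac{g(a+a')+2}{g(b+b')}$ looks circular, since its right-hand side involves $a'/b'=s_\Mov(\Perf)$; but upon cross-multiplying and substituting $g b'\, s_\Mov(\Perf)=ga'$, the two occurrences of $b'$ cancel, leaving $g b\,s_\Mov(\Perf)\le ga+2$, i.e. $s_\Mov(\Perf)\le \frac{a}{b}+\frac{2}{gb}=s_\Eff(\Perf)+\frac{2}{bg}$. I expect this cancellation to be the main point: it is what turns a weak, seemingly self-referential estimate into the sharp bound, and it works exactly because the bracket gains the extra $+2$ in weight. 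The only places that demand care are the normalization reductions (irreducibility and coprimality of $F$ and $G$) and reading off the correct weight $g(a+a')+2$ of $[F,G]$; the rest is the mediant estimate together with \Cref{lm:properties}(i).
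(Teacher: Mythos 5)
Your proposal is correct and follows essentially the same route as the paper: reduce to $F$ prime and $F,G$ coprime, form the scalar bracket $[F,G]$, use \Cref{lm:scalar-bracket}(ii) and (v) to bound its class and rule out common factors with $F$, and then perform the same cross-multiplication that cancels $b'$. The only differences are cosmetic — the paper arranges coprimality by replacing $G$ with a general member of $|G|$ rather than by \Cref{lm:properties}(ii), and it asserts $s_\Mov(\Perf)\leq s([F,G])$ without spelling out the minimal-slope-component/mediant argument that you (correctly) make explicit.
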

\begin{proof}
If $F$ is a product of at least two distinct prime factors, then
each of them realizes the effective slope.
Hence $s_\Mov(\Perf)=s_\Eff(\Perf)$ by \Cref{lm:properties}(i),
and so the conclusion trivially holds.
Hence we can assume that $F$ is a prime Siegel modular form.

Up to replacing $G$ by a general element in its linear system, we can assume that $F$ does not divide $G$.
By \Cref{lm:scalar-bracket}(v), the form $[F,G]$
is not divisible by $F$, and so in particular $[F,G]$ does not identically vanish.
It follows from \Cref{lm:scalar-bracket}(ii) that
\[
\frac{a'}{b'} = s_\Mov(\Perf)\leq s([F,G])\leq \frac{g(a+a')+2}{g(b+b')}\,,
\]
which can be rewritten as
\[
s_\Mov(\Perf)=\frac{a'}{b'}\leq \frac{a}{b}+\frac{2}{bg}=s_\Eff(\Perf)+\frac{2}{bg}\,.
\]
\end{proof}

Both the scalar Rankin-Cohen bracket and $\DIFF_{g,a}$ (which will be introduced in \Cref{sec:diffop})
are holomorphic differential operators of degree $g$,
but their relationship is not clear, and deserves a further investigation.

%%%%%%%%%%%%%%%%%%%%%%%%%%%%%%%%%%%%%%%%%%%%

\section{Effective and moving slopes for small $g$}\label{sec:lowgenus}

In this section we recall what is known about the effective and moving
slopes of $\calA_g$ for $2\leq g\leq 5$.
In all these cases the effective slopes are achieved, and
we analyze what we obtain by applying
\Cref{thm:main} (whose proof is postponed till \Cref{sec:diffop})
to such effective divisors of minimal slope,
and we prove Corollaries \ref{maincor:mov}-\ref{cor:g=5}-\ref{cor:genus6}-\ref{cor:M4}.

\subsection{Case $g=2$}
In genus $2$ the unique effective divisor of minimal slope is the closure
of the locus $\calA_2^{\dec}$
of decomposable abelian varieties inside $\Perf[2]$. Set-theoretically, this locus is simply equal to the theta-null divisor $\tnull$.
%Even though $T_2$ is an $\Sp(4,\ZZ)$-modular form with character
%while we recall that only $T_2^2$ is a well-defined modular form for $\Sp(4,\ZZ)$.
We thus obtain
\[
s_\Eff(\Perf[2])=s(\calA_2^{\dec})= s(\tnull)=s(5\lambda-\delta/2)=10\,.
\]
\begin{rem}
Note that the class $[T_2]=\frac{1}{2}(10\lambda-\delta)$ in
$\Pic_\QQ(\Perf)$ is not integral, though its double is.
From the stacky point of view, this is a manifestation of the fact that
$\calA_2^{\dec}\cong (\calA_1\times\calA_1)/S_2$
and so the general element of $\calA_2^{\dec}$
has an automorphism group $\{\pm 1\}\times\{\pm 1\}$, of order $4$, whereas
the general genus $2$ ppav has automorphism group $\{\pm 1\}$, of order $2$.
\end{rem}

As mentioned in the introduction,
\Cref{thm:main} can be applied to $T_2$,
even though $T_2$ is a modular form with character.
Since $T_2$ satisfies Condition \eqref{cond} by \Cref{prop:cond},
we obtain a cusp form $\DIFF_{2,5}(T_2)$
of weight $12$ that is not identically zero on $\tnull$.
As in \Cref{cor:main}, it follows that
$$
 s_\Mov(\Perf[2])\leq s\left(\DIFF_{2,5}(T_2)\right)=12\,.
$$

\begin{proof}[Proof of \Cref{maincor:mov} for $g=2$]
It is known \cite{frei} that the ideal of cusp forms inside the ring of genus $2$
Siegel modular forms is generated by two modular forms
$\chi_{10}\coloneqq T_2^2$ and $\chi_{12}$, which
has class $[\chi_{12}]=12\lambda-\delta$.
It then follows that $\DIFF_{2,5}(T_2)$ and $\chi_{12}$ are proportional,
and so $\DIFF_{2,5}(T_2)$ realizes the moving slope.
\end{proof}

Since $T_2$ satisfies Condition \eqref{cond} by \Cref{prop:cond},
and since $T_2$ and $\chi_{12}$ are square-free and without common factors,
\Cref{lm:scalar-bracket}(v) ensures that
the cusp form
$[T_2,\chi_{12}]$ does not vanish identically along $\tnull$.
By \Cref{lm:scalar-bracket} it follows that
\[
[[T_2,\chi_{12}]]=36\lambda-3\delta\,,
\]
and so $[T_2,\chi_{12}]$ is another Siegel modular form that achieves the moving slope.\

\subsection{Jacobian and hyperelliptic loci}\label{sec:jac}

As mentioned in the introduction, it is possible to define
a slope for effective divisors in the moduli space
$\calM_g$ of projective curves of genus $g$. We denote
\[
\tau_g : \calM_g\longrightarrow\calA_g
\]
the Torelli map that sends a smooth projective curve of genus $g$ to its Jacobian.

The Torelli map is known to extend to a morphism
$\overline{\tau}_g:\overline{\calM}_g\to\Perf$
from the Deligne-Mumford compactification~$\overline{\calM}_g$ of~$\calM_g$ to $\Perf$ \cite{AlBr}, but for our purposes, it will suffice to work with the well-known partial extension
\[
\tau'_g:\calM'_g\longrightarrow\Part
\]
from the moduli space $\calM'_g$ of irreducible stable curves of genus $g$ with at most one node. The partial compactification $\calM'_g$ is
 the union of $\calM_g$
and the boundary divisor $\Delta'=\partial\calM'_g$ consisting of singular curves with only one node, which is non-separating.
It is well-known that $\Pic_\QQ(\calM'_g)=\QQ\lambda_1\oplus\QQ\delta'$
for $g\geq 3$, and that the map induced by $\tau'_g$ on Picard groups is
\[
(\tau_g')^*\lambda=\lambda_1,\quad (\tau_g')^*\delta=\delta'\, .
\]
The  slope for a divisor $a\lambda_1-b\delta'$ on $\calM'_g$ is defined to be $a/b$, and the slopes of cones of divisors on $\calM'_g$ are defined analogously to $\calA_g'$.

\begin{rem}\label{rem:FP}
The standard definition of slope for an effective divisor in $\overline{\calM}_g$
involves the vanishing order at all the boundary divisors of $\overline{\calM}_g$.
It follows from \cite{FP} that, if we limit ourselves
to divisors of slopes less than $29/3$ for $g\leq 5$,
then the two definitions are equivalent.
\end{rem}

As a consequence of the above discussion, we have obtained the following

\begin{lm}\label{lm:restrict-Mg}
Let $g\geq 4$ and let $E$ be an (internal) effective divisor on $\calA_g$.
\begin{itemize}
\item[(i)]
If $E$ does not contain the Jacobian locus $\calJ_g$, then $(\tau'_g)^{-1}(E)$ is an
effective divisor in $\calM'_g$ of slope $s(E)$;
\item[(ii)]
If $s(E)<s_\Eff(\calM_g)$, then $E$ contains the Jacobian locus $\calJ_g$.
\end{itemize}
\end{lm}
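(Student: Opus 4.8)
The plan is to obtain (i) from functoriality of pullback on rational Picard groups, and then to derive (ii) from (i) by contradiction, using \Cref{rem:FP} to pass from the $\calM'_g$-slope to the standard effective slope of $\overline{\calM}_g$.

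\emph{Proof of (i).} Write $[E]=a\lambda-b\delta$ with $a>0$, $b\ge 0$, so that $E$ is the zero divisor of a Siegel modular form $F$ of weight $a$ with $\ord_D F=b$. Since $\tau'_g(\calM'_g)$ is dense in $\calJ_g$ and $E$ is closed, the hypothesis $\calJ_g\not\subseteq E$ is equivalent to saying that $\tau'_g(\calM'_g)$ is not contained in the support of $E$; equivalently, the pulled-back section does not vanish identically on $\calM'_g$. Hence $(\tau'_g)^{-1}(E)$ is a genuine (proper, nonzero) effective divisor on $\calM'_g$, rather than all of it. Because $\Perf$ is $\QQ$-factorial, $E$ is $\QQ$-Cartier and its pullback under $\tau'_g\colon\calM'_g\to\Part$ is well-defined; moreover $E$ does not contain $D=\pa\Part$, so it meets $D$ properly and $\Delta'=\pa\calM'_g$ is not a component of $(\tau'_g)^{-1}(E)$, whence the honest preimage agrees with the cycle-theoretic pullback. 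The class is then read off from the stated identities $(\tau'_g)^*\lambda=\lambda_1$ and $(\tau'_g)^*\delta=\delta'$:
\[
[(\tau'_g)^{-1}(E)]=(\tau'_g)^*(a\lambda-b\delta)=a\lambda_1-b\delta'.
\]
By definition the slope of this class is $a/b=s(E)$, which proves (i).

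\emph{Proof of (ii).} Suppose, for contradiction, that $s(E)<s_\Eff(\calM_g)$ while $E$ does not contain $\calJ_g$. By (i), $D'\coloneqq(\tau'_g)^{-1}(E)$ is an effective divisor on $\calM'_g$ of class $a\lambda_1-b\delta'$, hence of slope $a/b=s(E)$. Let $\overline{D'}$ be its closure in $\overline{\calM}_g$, an effective divisor no boundary component of which lies in its support. For $g\le 5$ there is a known effective divisor on $\overline{\calM}_g$ of slope below $29/3$, so $s_\Eff(\calM_g)<29/3$; together with $s(E)<s_\Eff(\calM_g)$ this forces $s(E)<29/3$. Hence \Cref{rem:FP}, which rests on \cite{FP}, applies to $\overline{D'}$ and identifies its standard slope with its $\calM'_g$-slope, namely $s(E)$. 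But then $s_\Eff(\calM_g)\le s(\overline{D'})=s(E)$, contradicting $s(E)<s_\Eff(\calM_g)$. Therefore $E$ must contain $\calJ_g$.

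The step requiring essential geometric input, and the main obstacle, is this final reconciliation of the two slope conventions. The $\calM'_g$-slope records only the coefficient $b$ of the non-separating boundary $\delta'=\delta_0$, whereas $s_\Eff(\calM_g)$ is the standard slope, built from the minimum of the boundary coefficients $b_i$ over \emph{all} boundary divisors. Writing $[\overline{D'}]=a\lambda_1-b\delta_0-\sum_{i\ge 1}b_i\delta_i$, even granting non-negativity of the extra coefficients $b_i$ one only gets standard slope $a/\min_i b_i\ge a/b=s(E)$ — the wrong direction for exhibiting a low-slope effective divisor on $\overline{\calM}_g$. Forcing the equality $\min_i b_i=b$ (equivalently $b_i\ge b$) is precisely what the bound of \cite{FP} supplies, and only in the range $g\le 5$ and below the threshold $29/3$; this is why \Cref{rem:FP} is invoked and why part (ii) is effective exactly in the low-genus regime in which it is applied.
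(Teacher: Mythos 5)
Your proof is correct and is essentially the argument the paper intends: the lemma is stated there without a separate proof, as a direct consequence of the pullback identities $(\tau'_g)^*\lambda=\lambda_1$, $(\tau'_g)^*\delta=\delta'$ and of \Cref{rem:FP}, which is exactly what you spell out (and in fact, with the paper's own convention that $s_\Eff(\calM_g)$ is computed via the $\calM'_g$-slope, part (ii) is the immediate contrapositive of (i), so your detour through $\overline{\calM}_g$ and \Cref{rem:FP} is needed only to match the standard slope convention, whence the $g\le 5$ restriction you correctly note). The one thin spot is the inference that $\Delta'$ is not a component of $(\tau'_g)^{-1}(E)$: since $\tau'_g(\Delta')$ is dense in $D$ only when $\calJ_{g-1}=\calA_{g-1}$, i.e.\ for $g\leq 4$, the hypothesis ``$E$ does not contain $D$'' does not by itself exclude $E\supseteq\tau'_g(\Delta')$ for $g\geq 5$ --- though this would only increase the $\delta'$-coefficient and hence lower the slope of the internal part, so it affects neither part (ii) nor the paper's application, which is at $g=4$.
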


\medskip
Intersecting an effective divisor with the locus t $\calH_g\subset\calM_g$ of hyperelliptic Jacobians can also provide a constraint for the slope.
The closure $\calH'_g$ of $\calH_g$ inside $\calM'_g$ is obtained
by adding the locus $\pa\calH'_g$ consisting of curves with one
non-disconnecting node, obtained from smooth hyperelliptic curves
of genus $g-1$ by identification of two points that are
exchanged by the hyperelliptic involution, cf.~\cite{ch}.

Call the restriction of $\lambda_1$ to $\calH'_g$ from  $\overline{\calM}_g$
still by $\lambda_1$, and denote $\xi_0$ the class of $\pa\calH'_g$
(which is also the restriction of $\delta_0$ from $\overline{\calM}_g$).
It is known that $\Pic_\QQ(\calH'_g)$ has dimension $1$ and is generated by $\lambda_1,\xi_0$ with
the relation $(8g+4)\lambda_1 =g\xi_0$
(see \cite[Proposition 4.7]{ch}).
The map $\overline{\tau}_g$
restricts to $\calH'_g\to\Part$ and sends
$\pa\calH'_g$ to the boundary of $\Part$.

The following result was proven by Weissauer \cite{we86} (see \cite{sm} for details). Here we present a different argument.

\begin{prop}\label{prop:restrict-hyp}
For every $g\geq 3$, modular forms of slope strictly less than
$8+\tfrac{4}{g}$ must contain $\calH_g$.
\end{prop}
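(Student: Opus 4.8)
The plan is to argue by contraposition: I will show that if an effective divisor $E$ on $\calA_g$, corresponding to a modular form and of class $[E]=a\lambda-b\delta$, does \emph{not} contain the hyperelliptic locus $\calH_g$ in its support, then necessarily $s(E)=a/b\geq 8+\tfrac{4}{g}$. A form with $b=0$ has infinite slope, so there is nothing to prove in that case and I may assume $b>0$. The key idea is to pull the divisor back to the hyperelliptic locus and exploit the one-dimensionality of $\Pic_\QQ(\calH'_g)$. Since $\calH_g\not\subseteq\supp(E)$ by assumption, the restriction $R\coloneqq(\overline{\tau}_g|_{\calH'_g})^*E$ is a genuine effective divisor on $\calH'_g$ (not the whole space). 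Using that the restricted Torelli map pulls $\lambda$ back to $\lambda_1$ and $\delta$ back to $\xi_0$, its class is
\[
[R]=a\lambda_1-b\xi_0\in\Pic_\QQ(\calH'_g).
\]

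Next I would substitute the Cornalba--Harris relation $(8g+4)\lambda_1=g\xi_0$ from \cite[Proposition 4.7]{ch}, that is $\xi_0=(8+\tfrac{4}{g})\lambda_1$, to rewrite
\[
[R]=\Big(a-b\big(8+\tfrac{4}{g}\big)\Big)\lambda_1.
\]
Since $\Pic_\QQ(\calH'_g)$ has rank one and $\lambda_1$ is nef and nonzero (being the restriction of the class of $\calL$, which is ample on $\Sat$), the pseudoeffective cone of $\calH'_g$ is exactly the ray $\QQ_{\geq 0}\lambda_1=\QQ_{\geq 0}\xi_0$. Effectivity of $R$ therefore forces the coefficient $a-b(8+\tfrac{4}{g})$ to be nonnegative, which yields $a/b\geq 8+\tfrac{4}{g}$. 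This is precisely the contrapositive of the assertion, so any $E$ of slope strictly below $8+\tfrac{4}{g}$ must contain $\calH_g$.

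The only delicate point, and the step I expect to require the most care, is the computation of the pullback of $\delta$: one must verify that $\overline{\tau}_g^*\delta$ restricts to $\xi_0$ with multiplicity exactly one, rather than to some multiple such as $2\xi_0$, since a wrong multiplicity would change the resulting bound. This follows by combining the boundary behaviour of the partial Torelli map on $\calM'_g$, where $(\tau'_g)^*\delta=\delta'$, with the identification of $\xi_0$ as the restriction of $\delta_0$ from $\overline{\calM}_g$, so that $\delta'|_{\calH'_g}=\xi_0$. Once this multiplicity is pinned down the numerology produces exactly $8+\tfrac{4}{g}$, matching Weissauer's bound, and the remaining steps are purely formal.
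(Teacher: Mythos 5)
Your overall strategy---restricting to $\calH'_g$, invoking the Cornalba--Harris relation $(8g+4)\lambda_1=g\xi_0$ to collapse the class of the restricted divisor to a multiple of $\lambda_1$, and concluding that the coefficient must be nonnegative---is the same as the paper's. However, there is a genuine gap at the decisive step. You assert that because $\Pic_\QQ(\calH'_g)$ has rank one and $\lambda_1$ is nef and nonzero, the pseudoeffective cone of $\calH'_g$ is exactly the ray $\QQ_{\geq 0}\lambda_1$, so that effectivity of $R$ forces $a-b\left(8+\tfrac{4}{g}\right)\geq 0$. But $\calH'_g$ is only a \emph{partial} compactification of $\calH_g$ and is not projective, so no such statement about its effective cone follows formally from the rank of the Picard group together with nefness; on a non-complete variety, to constrain an effective class one must exhibit an actual complete curve, not contained in the divisor, on which $\lambda_1$ has positive degree. (Nefness is itself a statement about complete curves, and there do exist complete curves inside $\pa\calH'_g$ contracted by the period map, on which $\lambda_1$ has degree zero, so positivity is not automatic.) This is exactly the content the paper has to supply: it constructs the pencil of hyperelliptic curves $C_t$, double covers of $\PP^1$ branched at $\lambda_1,\dots,\lambda_{2g+1},t$ with $t$ varying and the branch points chosen so that some member avoids the divisor; the resulting complete irreducible curve $B\subset\calH'_g$ satisfies $\deg_B\lambda_1>0$ and $\deg_B V\geq 0$, which is what yields the inequality. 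Without producing such a curve (or an equivalent positivity input, e.g.\ the explicit modular form of slope $8+\tfrac{4}{g}$ whose zero locus avoids $\calH_g$, constructed in \cite{sm}, which the paper mentions as an alternative route), your argument does not close.

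A smaller remark: the point you flag as delicate, namely the multiplicity of $\overline{\tau}_g^*\delta$ along $\pa\calH'_g$, is less critical than you suggest, because the inequality only improves if the restricted form vanishes to higher order along $\pa\calH'_g$; the paper simply writes $[V]=a\lambda_1-\beta\xi_0$ with $\beta\geq b$ and uses $s(F)=a/b\geq a/\beta$.
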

\begin{proof}
Let $F$ be a modular form on $\calA_g$ with class $[F]=a\lambda-b\delta$, and suppose that $F$ does not vanish on the entire
$\calH_g$. We want to show that $s(F)=\tfrac{a}{b}\geq 8+\tfrac{4}{g}$.

The pullback of $F$ on $\calH_g$
vanishes on an effective divisor $V$ of class $[V]=a\lambda_1-\beta\xi_0$ with $\beta\geq b$.
Using the relation in $\Pic_\QQ(\calH'_g)$,
we obtain
$[V]=\beta\left(\tfrac{a}{\beta}-\left(8+\tfrac{4}{g}\right)\right)\lambda_1$.

Consider now the double cover $C_t$
of $\PP^1$ branched at $\lambda_1,\dots,\lambda_{2g+1},t$,
and fix distinct $\lambda_1,\dots,\lambda_{2g+1},t_0$ such that
$C_{t_0}\notin V$.
Then $(C_t)_{t\in \PP^1}$ induces a map $\PP^1\to\calH'_g$,
whose image is a complete, irreducible curve $B\subset\calH'_g$ not contained in $V\cup\pa\calH'_g$.
It follows that $\deg_B(V)\geq 0$ and $\deg_B(\lambda_1)>0$,
and so $\tfrac{a}{\beta}-\left(8+\tfrac{4}{g}\right)\geq 0$.
The conclusion follows, since $s(F)\geq \tfrac{a}{\beta}$.
\end{proof}

Another way to prove \Cref{prop:restrict-hyp}
is to use the explicit modular form of slope $8+\tfrac{4}{g}$,
whose zero locus avoids $\calH_g$, constructed in \cite{sm}.

By \Cref{rem:FP}, for $g=3,4,5$ the content of \Cref{prop:restrict-hyp}
can be also recovered as a consequence of Corollary 3.27 in \cite{hm}.

\subsection{Case $g=3$}
The moduli space $\calA_3$ has a meaningful effective divisor,
namely (the closure of) the hyperelliptic locus $\calH_3$.
\begin{proof}[Proof of \Cref{maincor:mov} for $g=3$]
By \Cref{prop:restrict-hyp} a divisor in $\calA_3$ with slope smaller than $\tfrac{28}{3}$ must contain $\calH_3$.
This implies that the only effective divisor that could be of slope under $\tfrac{28}{3}$ is (the closure of) the hyperelliptic locus itself, and so
$s_\Mov(\Perf[3])\ge\tfrac{28}{3}$.
Since the closure of $\calH_3$ coincides with the theta-null divisor, we obtain from \eqref{eq:classTg}
\[
s(\calH_3)=s(T_3)=s(18\lambda-2\delta)=9<\tfrac{28}{3}\,.
\]
It follows that
$$
 s_\Eff(\Perf[3])=s(\calH_3)=9
 \quad\text{and}\quad
 s_\Mov(\Perf[4])\geq \tfrac{28}{3}\,.
$$
Since $T_3$ satisfies Condition \eqref{cond} by \Cref{prop:cond},
\Cref{thm:main} provides a modular form $\DIFF_{3,18}(T_3)$
of class $56\lambda-\beta\delta$ with $\beta\geq 6$. If $\beta\ge 7$, then the slope of $\DIFF_{3,18}(T_3)$ would be $s(\DIFF_{3,18}(T_3))\leq 56/7$, which is less than 9, contradicting the knowledge of effective slope. Thus $\beta=6$, and
$$
 s_\Mov(\Perf[3])\leq s(\DIFF_{3,18}(T_3))=\tfrac{56}{6}=\tfrac{28}{3}\,.
$$
This proves that the moving slope is equal to $s_\Mov(\Perf[3])=\tfrac{28}{3}$, and is realized by $\DIFF_{3,18}(T_3)$.
\end{proof}

There are also other constructions of Siegel modular forms
in $\calA_3$ of slope $\tfrac{28}{3}$:
\begin{itemize}
\item Let $M$ be the set of all octuplets of even characteristics that are cosets of some three-dimensional vector space of characteristics, and define
$$
\chi_{28}\coloneqq\sum_{\text{$M$ even octuplet coset}}\left(\frac{T_3}{\prod_{m\in M}\theta_m}\right)^2\,.
$$
This can be checked to be a modular form of class $[\chi_{28}]=28\lambda-3\delta$, see \cite{Tsu}. We verify that $\chi_{28}$ cannot be divisible by $T_3$, as otherwise $\chi_{28}/T_3$ would be a holomorphic cusp form of weight $10$, which does not exist by \cite{Tsu,LR}.
\item Alternatively, one can consider the forms
$$\chi_{140}\coloneqq\sum_{\text{$m$ even}} \left(\frac{T_3}{\theta_m}\right)^8\,,$$
which can be shown to have class $[\chi_{140}]=140\lambda-15\delta$.
We remark that the decomposable locus $\calA^{\dec}_3$ can be described by the equations $T_3=\chi_{140}=0$, since $\calA^{\dec}_3$ is simply the locus where at least two theta constants vanish.
Since $\calA_3^{\dec}$ has codimension $2$ within $\calA_3$,
this confirms that the forms $T_3$ and $\chi_{140}$ could not have a common factor.
\item Since $T_3$ satisfies Condition \eqref{cond} by \Cref{prop:cond},
and since $T_3$ and $\chi_{28}$ are square-free and without common factors,
\Cref{lm:scalar-bracket}(v) ensures that
the Rankin-Cohen bracket $[T_3,\chi_{28}]$ does not vanish identically along $\tnull$.
By \Cref{lm:scalar-bracket}, $[T_3,\chi_{28}]$ has weight $140$, and
vanishes to order $\beta\geq 15$ along the boundary. However, if it were to vanish to order 16 or higher, then its slope would be at most $\tfrac{140}{16}=8.75$, which is impossible since $s_\Eff(\Perf[3])=9$. Thus we must have $\beta=15$, so that
\[
[[T_3,\chi_{28}]]=140\lambda-15\delta
\]
is a Siegel modular form that also realizes $s_\Mov(\Perf[3])=\tfrac{140}{15}=\tfrac{28}{3}$.
\end{itemize}

\subsection{Case $g=4$}
The locus of Jacobians $\calJ_4$ is a divisor in $\calA_4$, which is known to be the unique effective divisor on $\Perf[4]$ of minimal slope, see \cite{smmodfour}. It is known that the effective slope of $\calM_4$ is $s_\Eff(\calM_4)=\tfrac{17}{2}$ (see \cite{gie} and \cite{FP}),

By Riemann's theta singularity theory,
theta divisors of Jacobians are singular, and in fact $\calJ_4=N_0'$.
Since $I_4=S_4$ and since $[I_4]=8\lambda-\delta$ as recalled in \Cref{sec:N0},
this reconfirms the equality
$$
 s_\Eff(\Perf[4])=s(I_4)=s(8\lambda-\delta)=8\,.
$$
As $I_4$ satisfies Condition \eqref{cond} by \Cref{prop:cond},
\Cref{thm:main} applied to $I_4$ produces
a modular form $\DIFF_{4,8}(I_4)$ not divisible by $I_4$,
of class $[\DIFF_{4,8}(I_4)]=34\lambda-\beta\delta$, with $\beta\ge 4$. Again, if $\beta$ were at least 5, the slope would be at most $34/5<8$, contradicting the effective slope, and thus we must have $\beta=4$.

\begin{proof}[Proof of \Cref{maincor:mov} for $g=4$]
From the above discussion, it follows that
$$
 s_\Mov(\Perf[4])\leq s(\DIFF_{4,8}(I_4))=s(34\lambda-4\delta)=\tfrac{17}{2}\,.
$$
On the other hand, \Cref{lm:restrict-Mg} implies that
any effective divisor in $\calA_4$
that does not contain the locus of Jacobians
has slope at least $s_\Eff(\calM_4)=\tfrac{17}{2}$.
It follows that $s_\Mov(\Perf[4])\geq \tfrac{17}{2}$.

We thus conclude that $s_\Mov(\Perf[4])=\tfrac{17}{2}=s(\DIFF_{4,8}(I_4))$.
\end{proof}

There are at least two other modular forms in $\calA_4$ that
realize the moving slope.
\begin{itemize}
\item The first one is $T_4$, whose class is $[T_4]=68\lambda-8\delta$ by~\eqref{eq:classTg}.
\item The second one is the Rankin-Cohen bracket $[I_4,T_4]$.
Since $I_4$ satisfies Condition \eqref{cond} by \Cref{prop:cond},
and since $I_4$ and $T_4$ are square-free and without common factors,
\Cref{lm:scalar-bracket} ensures that
$[I_4,T_4]$ does not vanish identically along $N'_0=\calJ_4$, and has class $306\lambda-\beta\delta$, with $\beta\geq 36$.
Since the effective slope of $\calA_4$ is $8$
and the moving slope of $\calA_4$ is $\tfrac{17}{2}$, we must have $\beta=36$ and
$$
s_\Mov(\Perf[4])=\tfrac{17}{2}=\tfrac{306}{36}=s([I_4,T_4])\,.
$$
\end{itemize}

As mentioned in the introduction, the pullback $\tau_g^*T_g$ via the Torelli map
gives $2\Theta_{\nl}$ on $\calM_g$, i.e.~$\sqrt{T_g}$ is not a modular form,
but its restriction to $\calJ_g$ is a Teichm\"uller modular form. For $g=4$ we exhibit a Siegel modular form that intersects $\calJ_4$ in
the divisor $\Theta_{\nl}$, with multiplicity $1$.

\begin{proof}[Proof of \Cref{cor:M4}]
Recall that $S_4=I_4$.
By \Cref{maincor:mov} for $g=4$ proven above,
$\DIFF_{4,8}(S_4)$ realizes the moving slope of $\calA_4$,
and so it does not contain the divisor of minimal slope,
namely the Schottky divisor.
Thus $\tau_4^*\DIFF_{4,8}(S_4)$ is an effective divisor on $\calM'_4$ of class $34\lambda_1-4\delta'$,
which thus realizes the effective slope $s_\Eff(\overline\calM_4)=\tfrac{17}{2}$.
Thus the pullbacks $\tau_4^*T_4$ and $\tau_4^*\DIFF_{4,8}(S_4)$
must have the same support. Since $[T_4]=2[\DIFF_{4,8}(S_4)]$,
we conclude that $\tau_4^*\DIFF_{4,8}(S_4)=\Theta_{\nl}$.
\end{proof}

\begin{rem}
For the sake of completeness, we recall that the moving slope of $\calM_4$ is $s_\Mov(\calM_4)=60/7$, see \cite{Fed}.
We can exhibit a modular form (analogous to $\chi_{140}$ for $g=3$) with this slope, namely
$$
\phi_{540}\coloneqq \sum_{\text{$m$ even}} \left(\frac{T_4}{\theta_m}\right)^8\,.
$$
The Siegel modular form $\phi_{540}$ has class $540\lambda-63\delta$,  see \cite{igusagen4}, and hence $\tau_4^*\phi_{540}$ gives an effective
divisor on $\calM'_4$ that realizes the moving slope $s_\Mov(\calM'_4)$.
Finally, we observe that both $T_4$ and $\phi_{540}$ have slope less than $9$, and
the equations $T_4=\phi_{540}=0$ define, set theoretically,
the hyperelliptic locus $\calH_4\subset\calM_4$, as discussed in \cite{Fp}.
\end{rem}

\subsection{Case $g=5$}
We recall that one of the main results of \cite{fgsmv} was the proof that the divisor $N'_0$ in $\Perf[5]$ has minimal slope:
$$
 s_\Eff(\Perf[5])=s(I_5)=s(108\lambda-14\delta)=\tfrac{54}{7}=7.714\dots
$$
Since $I_5$ satisfies Condition \eqref{cond} by \Cref{prop:cond},
by \Cref{thm:main}
\[
[\DIFF_{5,108}(I_5)]=542\lambda-\beta\delta \quad\text{with}\quad \beta\geq 70
\]
is a modular form that does not vanish identically on $N'_0$.

\begin{proof}[Proof of \Cref{cor:main}]
If $\beta\ge 71$, then the slope of $\DIFF_{5,108}(I_5)$ would be at most
$$
 542/71=7.633\dots<7.714\dots=54/7=s_\Eff(\Perf[5])\,,
$$
which is a contradiction. Thus $\beta=70$, and
\[
s_\Mov(\Perf[5])\leq s(\DIFF_{5,108}(I_5))=\tfrac{271}{35}= 7.742\dots
\]
\end{proof}

\subsection{Case $g=6$}\label{sec:genus6}
For genus $6$, the slope is bounded from below
$s(\Perf[6])\geq \frac{53}{10}$ by \cite{FV}.
Moreover, an interesting Siegel modular form
$\theta_{L,h,2}$ of class $14\lambda-2\delta$ was constructed in \cite{mss}, showing that $s(\Perf[6])\leq 7$ and that the Kodaira dimension of $\calA_6$ is non-negative.

\begin{proof}[Proof of \Cref{cor:genus6}]
In light of the classification of modular forms in low genus and weight
in \cite{CT} and \cite{CTweb},
in genus $6$ there are no cusp forms in weight $7,8,9,11,13$.
Now, in genus $6$ there are no Siegel modular forms of weight $2$
and we have seen above that
$s(\Perf[6])\geq \frac{53}{10}$.
Hence, the unique (up to multiple) cusp form of weight $10$ vanishes with multiplicity one along $D$ (and so does a possible cusp form in weight $6$).
As $\ord_D\theta_{L,h,2}=2$, the form
$\theta_{L,h,2}$ must thus be prime.

As for the second claim,
there are two possibilities:
\begin{itemize}
\item[(a)]
there exists a Siegel modular form of slope at most $7$,
not divisible by $\theta_{L,h,2}$: in this case, from \Cref{lm:properties} it follows that $s_\Mov(\Perf[6])\le 7$;
\item[(b)]
$\theta_{L,h,2}$ is the unique genus~$6$ Siegel modular form of slope~$7$ (up to taking powers): the claim then follows from \Cref{cor:main},
since $s(\DIFF_{6,14}(\theta_{L,h,2}))=7+\frac{2}{2\cdot 6}=\frac{43}{6}$ (as usual, if it happened that $\DIFF_{6,14}(\theta_{L,h,2})$ were to vanish to order strictly higher than $6\cdot 2=12$, then its slope would be at most $\tfrac{86}{13}<7$).
\end{itemize}
In either case, the result is proven.
\end{proof}

In the above case (a) the moduli space $\calA_6$ would have Kodaira dimension
at least $1$, in case (b) it would have Kodaira dimension $0$.

\section{Pluriharmonic differential operators}\label{sec:diffop}
In this section we introduce a suitable differential operator on the space of modular forms
and prove \Cref{thm:main} using  a general result of \cite{Ibukiyama}. Before introducing the relevant notions, we explain the outline of what is to be done.

We are looking for an operator $\DIFF_{g,a}$ that will map a genus $g$ Siegel modular form $F$ of weight $a$
to another modular form satisfying certain properties: more precisely,
$\DIFF_{g,a}(F)$ will be a polynomial in $F$ and its partial derivatives.
There are various motivations for looking for $\DIFF_{g,a}$ of such a form, which are discussed for the general setup for the problem in
\cite{Ibukiyama}, \cite{ibuzagier}, \cite{ehib}.

In our situation, motivated by the occurrence
of $\det(\partial F)$ in our treatment of
Rankin-Cohen bracket (see \Cref{rem:intrinsic} and \Cref{prop11}),
we will want $\DIFF_{g,a}(F)$ to restrict to $\det(\partial F)$ along the zero locus $\{F=0\}$.
Note that $\det(\pa F)$ is homogeneous in~$F$ of degree $g$,
in the sense that each monomial involves a product of $g$ different partial derivatives of~$F$, and moreover it is a purely $g$'th order differential operator, in the sense that each monomial involves precisely $g$ differentiations.
Hence, we will look for a $\DIFF_{g,a}$ that shares these two properties.

Besides $F\mapsto \det(\pa F)$, another operator with the above properties is $F\mapsto F^{g-1}(\det\pa)F$, where each monomial is $F^{g-1}$ multiplied by a suitable $g$'th order partial derivative of~$F$.
Of course $\DIFF_{g,a}(F)$ cannot be defined either
as $\det(\pa F)$ or as $F^{g-1}(\det\pa) F$, as these are not modular forms. But
a wished-for $\DIFF_{g,a}$ can be constructed explicitly:
in order to do so, we will use the general machinery of \cite{Ibukiyama},
which implies that a differential operator with constant coefficients maps
a non-zero modular form to a modular form if the corresponding polynomial
is pluriharmonic and
satisfies a suitable transformation property under the action of $\GL(g,\CC)$.

We now begin by reviewing the general notation, before stating a particular case of \cite[Thm.~2]{Ibukiyama} that allows the construction of $\DIFF_{g,a}$.

\subsection{Polynomials and differential operators}
Let $R_1,\dots,R_g$ be a $g$-tuple of $g\times g$ {\em symmetric} matrices, and denote the entries of $R_h$ by $(r_{h;ij})$. Denote
\[
\CC[R_1,\dots,R_g]\coloneqq\CC[\{r_{h;ij}\}]
\]
the space of polynomials in the entries of these matrices. The group $\GL(g,\CC)$
naturally acts by congruence on each symmetric matrix $R_h$,
and so on the space $\CC[R_1,\dots,R_g]$. For every integer $v\geq 0$ we denote by $\CC[R_1,\dots,R_g]_v\subset\CC[R_1,\dots,R_g]$ the vector subspace of those polynomials $P\in\CC[R_1,\dots,R_g]$ that satisfy
\[
P(AR_1A^t,\dots,AR_gA^t)=\det(A)^{v}P(R_1,\dots,R_g)
\]
for all $A\in\GL(g,\CC)$.

For every polynomial $Q\in\CC[R_1,\dots,R_g]$ we define
\[
Q_\pa\coloneqq Q(\pa_1,\dots,\pa_g),
\quad\text{where as usual}\ (\pa_h)_{ij}\coloneqq\frac{1+\delta_{ij}}{2}\frac{\pa}{\pa \tau_{h;ij}}\,.
\]
Such $Q_\pa$ is then a  holomorphic differential operator with constant coefficients
acting on holomorphic functions in the variables $\tau_{h;ij}$.
We further define the holomorphic differential operator $\calD_Q$ that sends
a $g$-tuple of holomorphic functions $F_1(\tau_1),\dots,F_g(\tau_g)$ on~$\HH_g$ to another holomorphic function on the Siegel space given by
\[
\calD_Q(F_1,\dots,F_g)(\tau)\coloneqq \left. Q_\pa(F_1,\dots,F_g)\right|_{\tau_1=\dots=\tau_g=\tau}\,.
\]
What this means is that applying each $\pa_h$ takes the suitable partial derivatives of $F_h$ with respect to the entries of the period matrix $\tau_h$, and then once the polynomial in such partial derivatives is computed, it is evaluated at the point $\tau_1=\dots=\tau_g=\tau$.

While the general theory of applying $\calD_Q$ to a $g$-tuple of modular forms is very interesting, we will focus on the case $F=F_1=\dots=F_g$, denoting then simply $\calD_Q(F)\coloneqq\calD_Q(F,\dots,F)$.

\begin{exa}\label{exa:poly}
It is immediate to check that the following polynomial (the general notation $\fR$ will be introduced in \Cref{sec:poly2} below)
\[
\fR(\underbrace{1,\dots,1}_{\text{ $g$ times}}):=g!\,\sum_{\sigma\in S_g}\sgn(\sigma) \prod_{j=1}^g r_{j;j,\sigma(j)}
\]
induces the differential operator $\calD_{\fR(1,\dots,1)}(F)=g!\,\det(\partial F)$. On the other hand, the polynomial
\[
\fR(g,\underbrace{0,\dots,0}_{\text{$g-1$ times}}\!\!):=\sum_{\sigma\in S_g}\sgn(\sigma) \prod_{j=1}^g r_{1;j,\sigma(j)}
\]
induces the differential operator $\calD_{\fR(g,0,\dots,0)}(F)=F^{g-1}(\det\pa)F$.
We stress that while each term of $\calD_{\fR(1,\dots,1)}$ is a product of $g$ first-order partial derivatives of $F$, each term of $\calD_{\fR(g,0,\dots,0)}$ is equal to $F^{g-1}$ multiplied by one $g$'th order partial derivative of~$F$.
\end{exa}

Our main result is the following \Cref{thm:precise}, which is a refined version of \Cref{thm:main}: indeed, to obtain \Cref{thm:main} from it, we just need to set
\[
\DIFF_{g,a}(F)\coloneqq {\textstyle\frac{1}{g!}}\calD_{Q_{g,a}}(F)
\]
for every modular form $F$ of genus $g\geq 2$ and weight $a\geq\frac{g}{2}$
(the constant factor $g!$ is introduced only for notational convenience).

In order to motivate the statement below,
recall that we want $\calD_{Q_{g,a}}(F)$ to be equal
to $g!\,\det(\pa F)$ modulo $F$. Since $g!\,\det(\pa F)=\calD_{\fR(1^g)}(F)$
as in \Cref{exa:poly},
and since $\fR(1^g)$
belongs to $\CC[R_1,\dots,R_g]_2$, it is rather natural to look for
$Q_{g,a}$ inside $\CC[R_1,\dots,R_g]_2$.

\begin{thm}\label{thm:precise}
For every $g\geq 2$ and every
$a\geq\frac{g}{2}$ there exists a polynomial $Q_{g,a}\in\CC[R_1,\dots,R_g]_2$
such that the following properties hold for every genus $g$ Siegel modular form $F$ of weight $a$:
\begin{itemize}
\item[(i)] $\calD_{Q_{g,a}}(F)$ is a Siegel modular form of weight $ga+2$;
\item[(ii)] if $\ord_D F=b$, then $\ord_D\calD_{Q_{g,a}}(F)\ge gb$;
\item[(iii)] the restriction of $\calD_{Q_{g,a}}(F)$ to the zero locus $\{F=0\}$ of~$F$ is equal to $g!\cdot\det(\pa F)$.
\end{itemize}
Moreover, for any other polynomial $Q'_{g,a}\in\CC[R_1,\dots,R_g]_2$ such that $\calD_{Q'_{g,a}}$ satisfies properties (i) and (iii), the difference $\calD_{Q_{g,a}}(F)-\calD_{Q'_{g,a}}(F)$ is a Siegel modular form divisible by~$F$.
\end{thm}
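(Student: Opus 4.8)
The plan is to construct $Q_{g,a}$ as a normalized pluriharmonic projection of the mixed-determinant polynomial $\fR(1,\dots,1)$ of \Cref{exa:poly}, to read off (i) and (ii) from the general theory, to reduce (iii) to the behaviour of the top multidegree component of $Q_{g,a}$, and to settle the uniqueness clause by an elementary divisibility argument. First I would record the precise input from \cite[Thm.~2]{Ibukiyama}: realizing a weight-$a$ form through auxiliary matrices $X_1,\dots,X_g$ of size $m\times g$ with $m=2a$, so that each $R_h=X_h^tX_h$ appears as a Gram matrix, the theorem asserts that whenever $Q\in\CC[R_1,\dots,R_g]_2$ is pluriharmonic --- i.e.\ annihilated by all the Laplacians $\Delta_h^{(\alpha\beta)}=\sum_i \pa_{x_{h;i\alpha}}\pa_{x_{h;i\beta}}$ --- the function $\calD_Q(F)$ is a Siegel modular form for every weight-$a$ form $F$. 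Setting $A=cI$ in the defining relation of $\CC[R_1,\dots,R_g]_2$ forces $Q$ to be homogeneous of total degree $g$, so the weight of $\calD_Q(F)$ is $ga+2$; this is (i). Property (ii) holds for any pluriharmonic $Q$ of this degree: in the Fourier--Jacobi expansion \eqref{F-J} every entry of $\pa_\tau$ either differentiates $\tau'$ or $z$, or acts as $\pa_w$, multiplying $f_rq^r$ by $2\pi i r$, so each derivative of $F$ is again $O(q^b)$; since each monomial of $\calD_Q(F)$ has multidegree $(d_1,\dots,d_g)$ with $\sum_h d_h=g$, after the diagonal restriction it is a product of exactly $g$ factors, one per copy, each of $q$-order $\ge b$, giving $\ord_D\calD_Q(F)\ge gb$.

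Next I would reduce (iii) to the multidegree-$(1,\dots,1)$ part $Q^{\mathrm{mult}}$ of $Q$. Restricting $\calD_Q(F)$ to $\{F=0\}$ annihilates every monomial in which some $d_h=0$, because such a monomial carries an undifferentiated factor $F$; as $\sum_h d_h=g$, the only surviving monomials are the multilinear ones, and hence $\calD_Q(F)\equiv\calD_{Q^{\mathrm{mult}}}(F)\pmod F$. Because the $\GL(g,\CC)$-action preserves multidegree, $Q^{\mathrm{mult}}$ is again a $\det^2$-semiinvariant, so $\calD_{Q^{\mathrm{mult}}}(F)=Q^{\mathrm{mult}}(\pa F,\dots,\pa F)$ is a degree-$g$ polynomial in the entries of the symmetric matrix $\pa F$ transforming by $\det(A)^2$ under $\pa F\mapsto A(\pa F)A^t$. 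The space of such polynomials is one-dimensional, spanned by $\det(\pa F)$ (the only weight-$\det^2$ semiinvariant of $\Sym^2\CC^g$, consistent with \Cref{rem:intrinsic}), so $\calD_{Q^{\mathrm{mult}}}(F)=c(Q)\det(\pa F)$ for a scalar $c(Q)$, and (iii) amounts exactly to $c(Q)=g!$. Accordingly I would put $Q_{g,a}\coloneqq\frac{g!}{c_{g,a}}\mathbf{H}_a(\fR(1,\dots,1))$, where $\mathbf{H}_a$ is the pluriharmonic projection attached to $m=2a$ and $c_{g,a}\coloneqq c(\mathbf{H}_a(\fR(1,\dots,1)))$. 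Since the $\Delta_h^{(\alpha\beta)}$ form a $\GL(g,\CC)$-tensor, $\mathbf{H}_a$ is $\GL(g,\CC)$-equivariant, so $Q_{g,a}$ still lies in $\CC[R_1,\dots,R_g]_2$ and is pluriharmonic; then (i)--(ii) apply and, by the normalization, $c(Q_{g,a})=g!$, giving (iii).

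The crux, and the only place where the hypothesis $a\ge\frac g2$ is needed, is the non-vanishing $c_{g,a}\ne 0$ that legitimizes the normalization. Here I would evaluate $c_{g,a}$ explicitly: up to a universal constant it is a product of linear factors in $m=2a$, and it is nonzero exactly when $m\ge g$, that is $a\ge\frac g2$. The structural reason is that the harmonic model of the determinant is $\det(X^tX)$ for an $m\times g$ matrix $X$, which vanishes identically as soon as $m<g$ (the Gram matrix then has rank $<g$); thus for $a<\frac g2$ the determinantal direction is killed by $\mathbf{H}_a$, whereas for $a\ge\frac g2$ it survives with nonzero coefficient. The explicit description of $\mathbf{H}_a$ and the computation of $c_{g,a}$ are exactly the kind of pluriharmonic-polynomial input supplied by \cite{Ibukiyama} and \cite{ehib}; I expect this determinant evaluation to be the main technical obstacle.

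Finally I would prove the uniqueness clause. Let $Q'_{g,a}\in\CC[R_1,\dots,R_g]_2$ be any polynomial for which $\calD_{Q'_{g,a}}$ satisfies (i) and (iii). Then $\calD_{Q_{g,a}}(F)-\calD_{Q'_{g,a}}(F)=\calD_{Q_{g,a}-Q'_{g,a}}(F)$ is a Siegel modular form of weight $ga+2$ by (i). By (iii) applied to both operators the multilinear part contributes $g!\det(\pa F)-g!\det(\pa F)=0$, so $\calD_{(Q_{g,a}-Q'_{g,a})^{\mathrm{mult}}}(F)=0$; consequently every remaining monomial of $\calD_{Q_{g,a}-Q'_{g,a}}(F)$ carries an undifferentiated factor $F$, and $\calD_{Q_{g,a}-Q'_{g,a}}(F)=F\cdot G$ for a holomorphic function $G$ on $\HH_g$. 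As the left-hand side is a modular form and $F$ is a nonzero modular form, $G$ is holomorphic and obeys the transformation law of weight $(g-1)a+2$, hence is itself a Siegel modular form; therefore $\calD_{Q_{g,a}}(F)-\calD_{Q'_{g,a}}(F)$ is divisible by $F$, as asserted.
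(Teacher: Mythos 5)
Your handling of the soft parts is essentially correct and parallels the paper: (i) is the appeal to \Cref{thm:ibu}, your Fourier--Jacobi argument for (ii) is the paper's multilinearity argument, your reduction of (iii) to the multidegree-$(1,\dots,1)$ component is the content of \Cref{cor:summands} (your one-dimensionality argument for degree-$g$, weight-$\det^2$ semiinvariants of a single symmetric matrix is a clean substitute for the explicit Laplace-expansion computation), and the uniqueness clause is handled as in the paper. The problem is that the existence of $Q_{g,a}$ --- which is the actual content of the theorem --- is not established, for two concrete reasons. First, you misquote the pluriharmonicity hypothesis of \cite[Theorem 2]{Ibukiyama}. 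The condition required (see \Cref{sec:pluri}) is that $\tP(X_1,\dots,X_g)=P(X_1X_1^t,\dots,X_gX_g^t)$ be pluriharmonic with respect to the \emph{concatenated} $g\times 2ag$ matrix $(X_1,\dots,X_g)$, i.e.\ annihilated by the summed Laplacians $\sum_{h=1}^g\Delta_{h;ij}$, which by \Cref{cor:pluri} amounts to the single equation $\sum_h D_{h;11}P=0$. You instead demand annihilation by each block Laplacian $\Delta_h^{(\alpha\beta)}$ separately. That condition is strictly stronger, and in fact no nonzero element of $\CC[R_1,\dots,R_g]_2$ satisfies it: writing $P=\sum_{\bfn}c(\bfn)\fR(\bfn)$ by \Cref{procesi}, the formula of \Cref{deriv} and the linear independence of the $\widehat{\fR}_{1;1}(\bfp)$ show that $D_{h;11}P=0$ forces $(2a-n'_h)\,c(\bfp+\bfe_h)=0$ for all $\bfp\in\bfP$; since $n'_h\le g-1<2a$, imposing this for every $h$ gives $c\equiv 0$. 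So with your definition the projection $\mathbf{H}_a$ is the zero map on $\CC[R_1,\dots,R_g]_2$, $c_{g,a}=0$ for every $a$, and the construction collapses. (The paper's $Q_{g,a}$ has $D_{h;11}Q_{g,a}\neq 0$ for each individual $h$; only the sum over $h$ vanishes, and arranging this cross-block cancellation is exactly what the coefficients $C(m)$ of \Cref{sec:explicit} are designed to do.)

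Second --- and this persists even after correcting the definition --- the crux is deferred rather than proved, as you yourself acknowledge: one must exhibit a pluriharmonic element of $\CC[R_1,\dots,R_g]_2$ whose $(1,\dots,1)$-component is nonzero whenever $2a\geq g$, i.e.\ prove $c_{g,a}\neq 0$. Your rank heuristic ($\det(X^tX)\equiv 0$ when $2a<g$) explains at best why existence must fail for small weight; it gives no nonvanishing statement when $2a\geq g$, and the canonical harmonic projection $\mathbf{H}_a$ you invoke is a stable-range construction whose existence, equivariance, and computability precisely at and near the boundary $2a=g$ are exactly what would need justification. This missing step is what the paper supplies by hand: the explicit coefficients $C(m)$ in \Cref{sec:explicit}, the derivative formula of \Cref{deriv}, and the verification of the recurrence \eqref{harmoniccondition} in \Cref{prop:existence}, which together produce $Q_{g,a}$ with $(1,\dots,1)$-coefficient equal to $1$. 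Without carrying out this (or an equivalent) computation, your proposal proves everything except the existence statement, which is the theorem.
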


The above differential operator $\calD_{Q_{g,a}}$, which is homogeneous of degree $g$,
can be also applied to modular forms with a character, which only occur for $g=2$:
in this case, the output is a modular form (with trivial character).

\begin{rem}
As a consequence of \Cref{thm:precise}(iii),
if a modular form $F$ of genus $g$ and weight $a\geq\frac{g}{2}$ satisfies Condition \eqref{cond}, then $\calD_{Q_{g,a}}(F)$ does not vanish identically on the zero divisor of $F$.
\end{rem}

The reason we are able to construct $Q_{g,a}$ explicitly is that we can use a lot of prior work, especially by the second author and collaborators, on differential operators acting on modular forms.
In particular, by \Cref{thm:ibu} the operator $\calD_{Q_{g,a}}$ will map modular forms to modular forms if $Q_{g,a}$ is pluriharmonic --- this essential notion will be recalled in \Cref{sec:pluri}.

Thus to prove \Cref{thm:precise}, it will suffice to construct a pluriharmonic
$Q_{g,a}\in \CC[R_1,\dots,R_g]_2$. Property (i) will rely on \Cref{thm:ibu}
and (ii) will be easily seen to hold.
Up to rescaling, we will also check (iii), and the last claim will follow.

\subsection{A basis of $\bm{\CC[R_1,\dots,R_g]_2}$}\label{sec:poly2}
Consider the $g$-tuple of symmetric $g\times g$ matrices $R_1,\dots,R_g$. We set
\begin{equation}\label{eq:Mdefined}
\fR\coloneqq t_1R_1+\dots+t_gR_g\,,
\end{equation}
and denote by $\fR(\bfn)\in\CC[R_1,\dots,R_g]$ the coefficients of the expansion of the determinant
\[
\det(\fR)=\sum_{\bfn\in \bfN_g}\fR(\bfn)t_1^{n_1}\dots t_g^{n_g}
\]
as a polynomial in the variables $t_1,\dots,t_g$, where
$$\bfN_g\coloneqq\{\bfn=(n_1,\dots,n_g)\in\NN^g\,|\, n_h\geq0\textrm{ for all }h,\ \sum n_h=g\}\,.$$
The importance of the polynomials $\fR(\bfn)$ for us
relies on the fact that they clearly belong to $\CC[R_1,\dots,R_g]_2$,
simply because $\det(A\fR A^t)=\det(A)^2\det(\fR)$ for all $A\in \GL(g,\CC)$.

The following lemma, of a very classical flavor, is due to Claudio Procesi.
\begin{lm}\label{procesi}
The set of polynomials $\{\fR(\bfn)\}_{\bfn\in\bfN}$ is a basis of $\CC[R_1,\dots,R_g]_2$\,.
\end{lm}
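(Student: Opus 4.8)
The plan is to exploit the multigrading of $\CC[R_1,\dots,R_g]$ by degree in each matrix $R_h$ to split the space, and then to pin down each graded piece using classical invariant theory, specifically the first fundamental theorem for $\SL_g$ acting on copies of the standard representation. First I would note that the congruence action $R_h\mapsto AR_hA^t$ is linear in each $R_h$ separately, hence preserves the multidegree; consequently $\CC[R_1,\dots,R_g]_2$ decomposes as a direct sum $\bigoplus_{\bfn}\CC[R_1,\dots,R_g]_{2,\bfn}$ indexed by multidegrees $\bfn=(n_1,\dots,n_g)$. Taking $A=cI$ shows that a nonzero multidegree-$\bfn$ relative invariant of weight $\det^2$ forces $\sum_h n_h=g$, i.e.\ $\bfn\in\bfN_g$. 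Since $\fR(\bfn)$ is by construction homogeneous of multidegree $\bfn$, it lies in the summand indexed by $\bfn$, so the lemma will follow once I show that each summand is exactly one-dimensional.

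The lower bound is easy. Distinct $\bfn$ give elements of distinct summands, so if I show each $\fR(\bfn)$ is nonzero, linear independence is automatic (project a vanishing linear combination onto each multidegree component). Nonvanishing comes from a diagonal substitution: given $\bfn$, choose diagonal matrices $R_h$ so that, along the diagonal of $\fR=\sum_h t_hR_h$, the variable $t_h$ occurs exactly $n_h$ times; then $\det(\fR)=t_1^{n_1}\cdots t_g^{n_g}$, so the coefficient $\fR(\bfn)$ equals $1$. Thus each summand has dimension at least one.

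It remains to prove the upper bound, that each $\CC[R_1,\dots,R_g]_{2,\bfn}$ is at most one-dimensional, and here I would reduce to the multilinear case by full polarization. Polarizing a multidegree-$\bfn$ element replaces $R_h$ by $n_h$ independent symmetric matrices (a total of $\sum_h n_h=g$ matrices) and yields a polynomial in these $g$ matrices that is linear in each; since polarization commutes with the congruence action, the result still transforms by $\det(A)^2$, and since restitution recovers a nonzero multiple of the original in characteristic zero, polarization is injective. It therefore suffices to bound the dimension of the space of \emph{multilinear} $\det^2$-relative invariants of $g$ symmetric matrices $S_1,\dots,S_g$. On that space I would substitute rank-one matrices $S_h=w_hw_h^t$; because the $w_hw_h^t$ span $\Sym^2\CC^g$ and the invariant is multilinear, this substitution is injective, and it produces a polynomial $q(w_1,\dots,w_g)$ of multidegree $(2,\dots,2)$ in $g$ vectors of $\CC^g$ satisfying $q(Aw_1,\dots,Aw_g)=\det(A)^2\,q(w_1,\dots,w_g)$.

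The heart of the argument is then the first fundamental theorem: as only vectors (and no covectors) are present, every $\SL_g$-invariant of $w_1,\dots,w_g$ is a polynomial in the brackets $[w_{i_1},\dots,w_{i_g}]$ (the determinant of the matrix with those columns), and the center-scaling $w_h\mapsto cw_h$ forces a weight-$\det^2$ invariant to be a combination of products of exactly two brackets. Since a bracket vanishes when a vector repeats and only $g$ vectors are available, the sole surviving bracket is the full one $[w_1,\dots,w_g]$, so $q=c\,[w_1,\dots,w_g]^2$ for a scalar $c$. Hence the multilinear invariant space is at most one-dimensional, and so is each $\CC[R_1,\dots,R_g]_{2,\bfn}$; as it already contains the nonzero $\fR(\bfn)$, it is exactly one-dimensional, and $\{\fR(\bfn)\}_{\bfn\in\bfN_g}$ is a basis. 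I expect the main obstacle to be the careful bookkeeping in the polarization/restitution reduction and the precise invocation of the FFT — in particular verifying that the center weight selects exactly two brackets and that with only $g$ vectors the sole nonzero bracket is $[w_1,\dots,w_g]$ — rather than any conceptual difficulty, since the existence half is essentially free.
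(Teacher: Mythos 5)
Your argument is correct and follows essentially the same route as the paper's: both reduce, via the multidegree decomposition and (explicit or implicit) polarization, to bounding the multilinear $\det^2$-relative invariants of $g$ symmetric matrices, and both evaluate on rank-one (decomposable) tensors $w_hw_h^t$ to invoke the classical description of $\SL(V)$-invariants of vectors by brackets, concluding that only $[w_1,\dots,w_g]^2$ survives. The nonvanishing of $\fR(\bfn)$ via a diagonal substitution likewise matches the paper's observation that $\fR(\bfn)\not\equiv 0$.
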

\begin{proof}
Let $V$ be a complex $g$-dimensional vector space
and let $\GL(V)$ naturally act on $\Sym^2(V^*)^{\oplus g}$ via
\[
A\cdot ((\phi_1\otimes\phi_1),\dots,(\phi_g\otimes\phi_g)):=
((\phi_1A)\otimes (\phi_1 A),\dots,(\phi_g A)\otimes(\phi_g A))
\]
for $A\in\GL(V)$.
Consider the $\CC$-algebra $\calI(V, g)$ of
$\SL(V)$-invariants inside $\Sym^2(V^*)^{\oplus g}$.
The quotient  $\GL (V) / \SL (V) \cong\CC^*$ acts on $\calI(V,g)$
and, under this action, the algebra of invariants decomposes as
$$
\calI (V, g) =\bigoplus_d \calI (V, g) _d, \ \hbox{where}\
\calI (V, g)_d\coloneqq \{P\in \calI(V,g)\ |\ A\cdot P=\det(A)^{2d}P\}.
$$
Clearly $\calI (V, g)_d$ is simply the subspace of $\calI(V, g)$ consisting of invariant polynomial maps
$P:\Sym^2(V)^{\oplus g}\rightarrow\CC$ of total degree $d\cdot \dim(V)$
with respect to the above $\CC^*$-action.
The subspace $\calI(V,g)_1$ decomposes as
\[
\calI(V,g)_1=\bigoplus_{\bfn\in\bfN_g} \calI(V,g)_{\bfn}
\]
where $\calI(V,g)_{\bfn}\coloneqq\left(\bigotimes_{i=1}^g \Sym^2(V^*)^{\otimes n_i}\right)^{\SL(V)}$
denotes the subspace of invariant polynomial functions $\Sym^2(V)^{\oplus g}\rightarrow\CC$ of multi-degree $\bfn$.

Since it is easy to check that $\fR(\bfn)\in \calI(V,g)_{\bfn}$,
it is enough to show
that $\calI(V,g)_{\bfn}$ has dimension $1$ for all $\bfn\in\bfN_g$.
Moreover, $\calI(V,g)_{\bfn}$ is isomorphic to $\calI(V,g)_{(1,\dots,1)}=(\Sym^2(V^*)^{\otimes g})^{\SL(V)}$
as an $\SL(V)$-module for all $\bfn\in\bfN_g$, and so it is enough to show
that $(\Sym^2(V^*)^{\otimes g})^{\SL(V)}$ has dimension at most $1$
(and in fact it will have dimension $1$, since $\fR(\bfn)\not\equiv 0$).

Thinking of $(\Sym^2(V^*)^{\otimes g})^{\SL(V)}$
as a subspace of $((V^*)^{\otimes 2g})^{\SL(V)}$,
we describe a basis of $((V^*)^{\otimes 2g})^{\SL(V)}$;
it is enough to do that for $V=\CC^g$.

Let $\mathfrak{P}$ be the set of all
permutations $(I,J)=(i_1,\dots,i_g,j_1,\dots,j_g)$ of $\{1,2,\dots,2g\}$
such that $i_h<j_h$ for all $h=1,\dots,g$.
For every $(I,J)\in\mathfrak{P}$ we denote by
\[
[i_1,\dots,i_g][j_1,\dots,j_g]:V^{\otimes 2g}\longrightarrow\CC
\]
the linear map that sends $v_1\otimes\dots\otimes v_{2g}$
to $\det(v_I)\det(v_J)$, where $v_I$ is the matrix whose $h$-th column is $v_{i_h}$
(and similarly for $v_J$).
It is a classical fact that the collection of $[i_1,\dots,i_g][j_1,\dots,j_g]$
with $(I,J)\in\mathfrak{P}$ is a basis of $((V^*)^{\otimes 2g})^{\SL(V)}$.

Fix now $(I,J)$ and consider the restriction of $[i_1,\dots,i_g][j_1,\dots,j_g]$
to $\Sym^2(V)^{\otimes g}$, and in particular to the vectors
of type
\[
v_1\otimes v_1\otimes v_2\otimes v_2\otimes\cdots\otimes v_g\otimes v_g
\]
which generate $\Sym^2(V)^{\otimes g}$.
Note that $[i_1,\dots,i_g][j_1,\dots,j_g]$ is alternating both in $I$ and in $J$,
and vanishes on all vectors
$v_1\otimes v_1\otimes v_2\otimes v_2\otimes\cdots\otimes v_g\otimes v_g$
as soon as either $I$ or $J$ contains $\{2m-1,\,2m\}$ for some $m=1,\dots,g$.
It follows that all
elements $[i_1,\dots,i_g][j_1,\dots,j_g]$ of the above basis of
$((V^*)^{\otimes 2g})^{\SL(V)}$
vanish on $\Sym^2(V)^{\otimes g}$,
except possibly $[1,3,5,\dots,2g-1][2,4,6,\dots,2g]$.
We conclude that $(\Sym^2(V^*)^{\otimes g})^{\SL(V)}$ is at most $1$-dimensional.
\end{proof}

\subsection{Definition of the polynomial $\bm{Q_{g,a}}$}\label{sec:explicit}
In view of \Cref{procesi}, the sought polynomial $Q_{g,a}\in\CC[R_1,\dots,R_g]_2$
must be a linear combination of the $\fR(\bfn)$'s.
Now we give the explicit formulas for the coefficients of such a linear
combination. Pluriharmonicity of $Q_{g,a}$ will be verified in \Cref{sec:Qpluri}.

We define the constant
\[
C(1)\coloneqq (g-1)\prod_{i=1}^{g-1}(2a-i)\,.
\]
Moreover, for every $m=2,\dots,g$ we define the constant
\[
C(m)\coloneqq (-1)^{m-1}(m-1)!(2a)^{m-1}\prod_{i=m}^{g-1}(2a-i)\,,
\]
where for $m=g$ the last product above is declared to be equal to $1$, so that $C(g)=(-1)^{g-1}(g-1)! (2a)^{g-1}$.

Assume $2a\geq g\geq 2$, so that $C(1)\neq 0$, and
define then
\[
Q_{g,a}:=\frac{1}{C(1)}\sum_{\bfn\in\bfN_g} c(\bfn)\fR(\bfn)\,,
\]
where
\begin{enumerate}
  \item $c(1,\dots,1)\coloneqq C(1)$;
  \item if at least two of $n_1,\dots,n_g$ are greater than 1, then we set $c(\bfn)\coloneqq 0$;
  \item if $n_h=m>1$ for some $h$, while $0\le n_j\le 1$ for any $j\neq h$, then we set $c(\bfn)\coloneqq C(m)$.
\end{enumerate}
Hence $c(\bfn)\neq 0$ if and only if $\bfn$ is equal to $(m,1,1,\dots,1,0,0,\dots,0)$ for some $m\ge 1$,
up to permuting its components.

\subsection{Explicit formulas}

In order to have a more explicit expression for the polynomials $\fR(\bfn)$,
we expand the relevant determinants.

\begin{ntn}
If $M$ is a $g\times g$ matrix and $I,J\subset\{1,2,\dots,g\}$ with $|I|=|J|$, we denote by $M_{IJ}$ the minor of $M$
consisting of rows $I$ and columns $J$, and denote by $\det_{IJ}(M)$ the determinant of $M_{IJ}$
(if $|I|=|J|=0$, then we formally set $\det_{IJ}(M)\coloneqq 1$). Moreover, we let $\hI$ be the complement of $I$
and, if $i\in \{1,2,\dots,g\}$, then we let $\hat{\imath}:=\{1,2,\dots,g\}\setminus\{i\}$.
\end{ntn}

Applying the Laplace expansion several times yields
\begin{equation}\label{laplace}
\fR(\bfn)=
\sum_{I_\bullet,J_\bullet}
\epsilon(I_\bullet,J_\bullet)
\det_{I_1 J_1}(R_1)\cdots \det_{I_g J_g}(R_g)\,,
\end{equation}
where $(I_\bullet,J_\bullet)=(I_1,\dots,I_g,J_1,\dots,J_g)$ and
\begin{itemize}
\item $I_1, \dots, I_g, J_1, \dots, J_g$ run over all subsets of $\{1,\dots,g\}$ such that $|I_i|=|J_i|=n_i$ for each $i=1,\dots,g$ and $\sqcup_{i=1}^{g}I_i= \sqcup_{i=1}^{g}J_i=\{1,\dots,g\}$;
\item $\epsilon(I_\bullet,J_\bullet)$ is the signature of the element of $S_g$ that maps $(I_1,\dots,I_g)$ to $(J_1,\dots,J_g)$, where the elements inside each subset $I_i$ or $J_i$ are ordered from minimum to maximum.
\end{itemize}

In order to compute $\calD_{\fR(\bfn)}(F,\dots,F)$,
consider $\bfn=(m,1,\dots,1,0,\dots,0)$.
Regarding the partial sum of the parts for $|I_j|=1$ as
expansions of determinants by Laplace expansion, we have
\begin{equation}\label{diffresult}
\calD_{\fR(\bfn)}(F,\dots,F) =F^{m-1}\sum_{|I|=|J|=m}
\epsilon(I,J)(g-m)!(\det\nolimits_{IJ}\pa)F\cdot
\det\nolimits_{\widehat{I},\widehat{J}}(\pa F)\,,
\end{equation}
where we denote
\[
\epsilon(I,J)\coloneqq(-1)^{i_1+\dots+i_m+j_1+\dots+j_m}.
\]
Thus we have obtained the following.

\begin{cor}\label{cor:summands}
\hskip2mm

\begin{itemize}
\item[(i)]
If $\bfn=(1,\dots,1)$, then $\calD_{\fR(1,\dots,1)}(F)=g!\,\det(\partial F)$.
\item[(ii)]
If $\bfn\in\bfN_g$ and $\bfn\ne (1,\dots,1)$, then $\calD_{\fR(\bfn)}(F)$ is a multiple of $F$.
\end{itemize}
\end{cor}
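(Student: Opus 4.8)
The plan is to read off both statements directly from the Laplace expansion \eqref{laplace}, using the single structural fact that $\fR(\bfn)$ is homogeneous of degree $n_i$ in the entries of $R_i$. This is immediate: since $R_i$ enters $\fR=t_1R_1+\dots+t_gR_g$ only through the coefficient $t_i$, and $\fR(\bfn)$ is by definition the coefficient of $t_1^{n_1}\cdots t_g^{n_g}$ in $\det(\fR)$, the variable $R_i$ can appear in $\fR(\bfn)$ only to total degree exactly $n_i$. Correspondingly, the operator $\fR(\bfn)(\pa_1,\dots,\pa_g)$ differentiates in the block $\tau_i$ exactly $n_i$ times, and this is what governs the whole argument after specializing $\tau_1=\dots=\tau_g=\tau$.

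For part (i) I would specialize the formula \eqref{diffresult} to $m=1$, where $\bfn=(1,\dots,1)$. Then the outer sum runs over singletons $I=\{i\}$ and $J=\{j\}$, with $F^{m-1}=1$; here $(\det_{IJ}\pa)F=(\pa F)_{ij}$, while $\det_{\widehat I,\widehat J}(\pa F)$ is the complementary minor of $\pa F$ obtained by deleting row $i$ and column $j$, and $\epsilon(I,J)=(-1)^{i+j}$. Thus for each fixed $i$ the inner sum $\sum_{j}(-1)^{i+j}(\pa F)_{ij}\det_{\widehat I,\widehat J}(\pa F)$ is precisely the cofactor expansion of $\det(\pa F)$ along row $i$, hence equals $\det(\pa F)$; summing over the $g$ rows and multiplying by $(g-1)!$ gives $(g-1)!\cdot g\cdot\det(\pa F)=g!\,\det(\pa F)$. (This is of course also exactly the content of \Cref{exa:poly}, which could be cited instead.)

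For part (ii) the point is purely combinatorial. Since $\bfn\in\bfN_g$ has exactly $g$ nonnegative entries summing to $g$, the condition $\bfn\ne(1,\dots,1)$ forces the entries not to be all equal to $1$, and therefore at least one index $h$ must have $n_h=0$. By the homogeneity remark above, $\fR(\bfn)$ has degree $0$ in the entries of $R_h$, so the operator $\fR(\bfn)(\pa_1,\dots,\pa_g)$ performs no differentiation whatsoever in the block $\tau_h$. Consequently, when it acts on $F(\tau_1)\cdots F(\tau_g)$ and one sets $\tau_1=\dots=\tau_g=\tau$, the factor $F(\tau_h)$ is never hit by a derivative and survives as an overall factor $F(\tau)$; hence $\calD_{\fR(\bfn)}(F)$ is divisible by $F$. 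For the multi-indices of the special shape $(m,1,\dots,1,0,\dots,0)$ this divisibility is already visible in \eqref{diffresult}, where the prefactor $F^{m-1}$ records exactly the $m-1$ vanishing slots.

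I do not expect any genuine obstacle: the corollary is essentially bookkeeping layered on top of \eqref{laplace} and \eqref{diffresult}. The only point demanding a little care is that \eqref{diffresult} was derived only for multi-indices of the form $(m,1,\dots,1,0,\dots,0)$, whereas part (ii) is asserted for every $\bfn\ne(1,\dots,1)$. The degree-counting argument above is precisely what covers all remaining multi-indices uniformly, so in writing up the proof I would foreground that argument rather than rely solely on the explicit expression \eqref{diffresult}.
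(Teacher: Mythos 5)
Your proof is correct and follows essentially the same route as the paper: part (i) via the $m=1$ case of \eqref{diffresult} and cofactor expansion of $\det(\pa F)$, and part (ii) via the observation that some $n_h=0$ forces $F(\tau_h)$ to survive undifferentiated. Your explicit tracking of the $(g-1)!$ prefactor and your remark that the degree-counting argument covers all $\bfn$ (not just those of the shape $(m,1,\dots,1,0,\dots,0)$ treated in \eqref{diffresult}) are, if anything, slightly more careful than the paper's write-up.
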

\begin{proof}
For (i), note that
\[
\sum_{j=1}^{g}(-1)^{i+j}\pa_{ij}F\cdot \det\nolimits_{\widehat{\imath}\,\widehat{\jmath}}(\pa F)
=\det(\pa F)
\]
for every $i=1,\dots,g$. Then formula \eqref{diffresult} for $\bfn=(1,1,\dots,1)$ (that is, for $m=1$) yields
\[
\calD_{\fR(1,\dots,1)}(F)=
\sum_{i,j=1}^{g}(-1)^{i+j}\pa_{ij}F\cdot\det\nolimits_{\widehat{\imath}\,\widehat{\jmath}}(\pa F)
=g!\,\det(\pa F)\,.
\]
For (ii), we observe that since $\sum n_h=g$, and all $n_h\ge 0$, it follows that unless $\bfn=(1,\dots,1)$, there exists at least one $h$ such that $n_h=0$. But then the polynomial $\fR(\bfn)$ would contain no $r_{h;ij}$, which is to say that $F_h$ is not differentiated at all by $\calD_{\fR(\bfn)}(F_1,\dots,F_g)$. This finally means that $\calD_{\fR(\bfn)}(F_1,\dots,F_g)$ is divisible by $F_h$, and thus $\calD_{\fR(\bfn)}(F)$ is divisible by $F$.
\end{proof}

\begin{exa}
For $g=2,3$ we have
\begin{align*}
\calD_{Q_{2,a}}(F,F) &=2\det(\partial F)+\frac{2(2a)}{1-2a}F\cdot(\det\partial) F\,,\\
\calD_{Q_{3,a}}(F,F,F) &= 6\det(\partial F)+
 \frac{3(2a)^2}{(2a-1)(2a-2)} F^2\cdot (\det\partial) F\\
&\quad - \frac{3(2a)}{(2a-1)}
  F\sum_{i,j=1}^3(\partial_{ij}F)\cdot(\det\nolimits_{\hat{\imath}\hat{\jmath}}\partial)F\,.
  \end{align*}
\end{exa}

\subsection{Pluriharmonic polynomials}\label{sec:pluri}
By \Cref{thm:ibu} the most important step toward the proof of \Cref{thm:precise}
is checking that $Q_{g,a}$, defined in \Cref{sec:explicit} as a linear combination of $\fR(\bfn)$, is pluriharmonic.
In this section we recall the relevant setup, definitions, and statements.

Fix an $g\times k$ matrix $X=(x_{i\nu})$, and denote for $1\leq i,j\leq g$
\[
\Delta_{ij}(X)\coloneqq\sum_{\nu=1}^{k}\frac{\pa^2}{\pa x_{i\nu}\pa x_{j\nu}}\,.
\]
For a polynomial $P(R)$ in the entries of a symmetric $g\times g$ matrix $R=(r_{ij})$ we denote $\tP(X)\coloneqq P(X X^t)$.

\begin{df}
The polynomial $P$ is called {\em pluriharmonic} (with respect to $X$) if $\Delta_{ij}\tP=0$ for all $1\leq i,j \leq g$.
\end{df}

To detect this pluriharmonicity in terms of $R$, we define the differential operator in variables $(r_{ij})$ by
\begin{equation}\label{eq:Ddefined}
D_{ij}\coloneqq k\cdot\pa_{ij}+\sum_{u,w=1}^{g}r_{uw}\pa_{iu}\pa_{jw}\,,
\end{equation}
where $\pa_{ij}:=\frac{1+\delta_{ij}}{2}\frac{\pa}{\pa r_{ij}}$. Then a direct computation yields
\begin{equation}\label{eq:DDelta}
(D_{ij}P)(XX^t)=\Delta_{ij}(\tP(X))\,,
\end{equation}
where $P(r_{ij})$ is any polynomial, and the LHS means $D_{ij}$ is applied to $P$, and then evaluated at $XX^t$.

This equality shows that computing the $\Delta_{ij}$ derivative of $\tP$ (which is a second order differential operator) amounts to computing the $D_{ij}$ applied to~$P$, which is a differential operator that includes first and second order derivatives.  Thus pluriharmonicity is equivalent to the condition $D_{ij}(P)=0$ for all $1\le i,j\le g$.

Now the full setup we require is as follows. For a positive integer $k=2a$ we consider a $g$-tuple of $g\times k$ matrices $X_1,\dots,X_g$, and denote $R_h\coloneqq X_hX_h^t$.

\begin{df}
A polynomial $P\in\CC[R_1,\dots,R_g]$ is called {\em pluriharmonic} if
\[
\tP(X_1,\dots,X_g)\coloneqq P(X_1X_1^t,\dots, X_gX_g^t)\,,
\]
is pluriharmonic with respect to the $g\times (gk)$ matrix $X=(X_1,\dots,X_g)$.
\end{df}

The following result is a special case of \cite[Theorem 2]{Ibukiyama}, which shows the importance of pluriharmonicity.
\begin{thm}\label{thm:ibu}
For $g\geq 2$, let $P\in\CC[R_1,\dots,R_g]_2$ and let $F\neq 0$ be a Siegel modular form of genus $g$ and weight $a\geq\frac{g}{2}$.
Then $\calD_P(F,\dots,F)$ is a Siegel modular form of weight $ga+2$
if $P$ is pluriharmonic.
\end{thm}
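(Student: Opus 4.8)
The plan is to verify the transformation law
$\calD_P(F,\dots,F)(\gamma\cdot\tau)=\det(C\tau+D)^{ga+2}\,\calD_P(F,\dots,F)(\tau)$
by checking it only on a set of generators of $\Sp(2g,\ZZ)$. I would use the standard fact that $\Sp(2g,\ZZ)$ is generated by the translations $\left(\begin{smallmatrix} I & B\\ 0 & I\end{smallmatrix}\right)$ with $B=B^t$ integral, the transformations $\left(\begin{smallmatrix} U & 0\\ 0 & (U^t)^{-1}\end{smallmatrix}\right)$ with $U\in\GL(g,\ZZ)$, and the inversion $J=\left(\begin{smallmatrix} 0 & -I\\ I & 0\end{smallmatrix}\right)$. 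Note first that $a\geq\frac{g}{2}$ makes $k=2a\geq g$ a genuine positive integer, so that the pluriharmonic setup with $g\times k$ matrices $X_h$ and $R_h=X_hX_h^t$ is meaningful. Since each $F$ is already a modular form, all the work is in controlling how the constant-coefficient operator $Q_\pa$ interacts with the automorphy factor $\det(C\tau+D)^a$ of each of the $g$ copies, before restricting to the diagonal $\tau_1=\dots=\tau_g=\tau$.

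For the first two families of generators the verification is direct, and it explains the shift by $2$ in the output weight. On translations the automorphy factor is trivial and $Q_\pa$ has constant coefficients, so it commutes with $\tau_h\mapsto\tau_h+B$ and the restricted operator is manifestly invariant. For $\tau_h\mapsto U\tau_h U^t$ the automorphy factor of each copy is a power of $\det(U)$, while the matrices $\pa_h$ transform contravariantly by congruence; applying the covariance $P(AR_1A^t,\dots,AR_gA^t)=\det(A)^2P(R_1,\dots,R_g)$ that defines membership in $\CC[R_1,\dots,R_g]_2$, the $g$ factors $\det(U)^{-a}$ coming from the forms combine with the factor $\det(U)^{-2}$ coming from $P$ to produce exactly $\det(U)^{-(ga+2)}$. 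This is precisely where the subscript $2$ is used, and where the extra $+2$ in the weight originates.

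The crux is the inversion $J$, for which the factor $\det(\tau_h)^a$ is genuinely nonconstant, and this is the step I expect to be the main obstacle. Here I would substitute $\tau_h\mapsto-\tau_h^{-1}$, express the derivatives $\pa_h$ in the new coordinates via the chain rule, and expand $Q_\pa\big(\det(\tau_1)^aF(-\tau_1^{-1}),\dots\big)$ by the Leibniz rule. The leading term reproduces $\det(\tau)^{ga+2}\calD_P(F,\dots,F)$ after restriction, again using the weight-$2$ covariance of $P$; every remaining term arises from differentiating the automorphy factors and should reorganize into an application of the second-order operators $D_{ij}$ of \eqref{eq:Ddefined} to $P$. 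By the identity \eqref{eq:DDelta} these are exactly the $\Delta_{ij}\tP$, so pluriharmonicity of $P$ forces all correction terms to vanish, leaving only the desired leading term. Conceptually, this cancellation reflects the fact that pluriharmonic polynomials intertwine the Weil representation for the dual pair governing theta series, so an alternative route would be to realize $F$ through pluriharmonic theta kernels and transport modularity across the correspondence; in either guise the content is the cited general result \cite[Theorem~2]{Ibukiyama}, of which the present statement is the scalar-valued, equal-argument special case with $k=2a$.
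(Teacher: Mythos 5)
The paper offers no independent proof of this statement: it is presented purely as a special case of \cite[Theorem 2]{Ibukiyama}, with no argument given. Your proposal ends in exactly the same place --- identifying the statement as the scalar-valued, equal-argument case of that theorem with $k=2a$ --- so in that respect you and the paper coincide, and the proposal is acceptable on the same terms as the paper itself.

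What you add beyond the citation is a sketch of how the cited theorem would be proved, and it is worth being precise about where that sketch stands. The reduction to generators of $\Sp(2g,\ZZ)$ is legitimate (the cocycle property of automorphy factors makes checking generators sufficient), the translation and $\GL(g,\ZZ)$ cases are indeed routine, and you correctly locate the origin of the $+2$ in the weight in the $\det(A)^2$-covariance defining $\CC[R_1,\dots,R_g]_2$. But the inversion step is not a detail to be waved through: the assertion that the Leibniz/chain-rule expansion of $Q_\pa$ applied to $\det(\tau_h)^a F(-\tau_h^{-1})$ has \emph{all} of its correction terms reorganize into contractions of $\sum_h D_{h;ij}P$, and nothing else, is the entire content of Ibukiyama's theorem, and your ``should reorganize'' is an expectation, not a computation. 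One inaccuracy in the sketch signals the difficulty: the corrections do not come only from differentiating the automorphy factors $\det(\tau_h)^a$; they also come from differentiating the factors of $\tau_h^{-1}$ produced by the chain rule on $F(-\tau_h^{-1})$, and it is the sum of these two sources that matches the two terms $k\,\pa_{ij}+\sum_{u,w}r_{uw}\pa_{iu}\pa_{jw}$ of $D_{ij}$ in \eqref{eq:Ddefined}. So, read as a standalone argument, the sketch has a genuine gap at its declared crux; read as a reduction to the cited result, it does exactly what the paper does.
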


Let us first give an elementary characterization of pluriharmonicity.
\begin{lm}\label{elementary}
Let $P\in\CC[R_1,\dots,R_g]$.
\begin{itemize}
\item[(i)] The polynomial $\tP(X)$ is pluriharmonic if and only if $\tP(AX)$ is harmonic
(i.e.~$\sum_{i=1}^{g}\Delta_{ii}\tilde{P}(AX)=0$) for any $A \in \GL(g,\CC)$.
\item[(ii)] Assume that $\tP\in\CC[R_1,\dots,R_g]_v$ for some $v$. Then
$\tP(X)$ is pluriharmonic if and only if $\Delta_{11}(\tP)=0$.
\end{itemize}
\end{lm}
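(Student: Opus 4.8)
The plan is to reduce both statements to a single transformation law describing how the matrix of second-order operators $\Delta\tP:=(\Delta_{ij}\tP)_{1\le i,j\le g}$ behaves under the substitution $X\mapsto AX$, together with two elementary facts of linear algebra over $\CC$. Since $\Delta_{ij}=\sum_\nu\partial_{x_{i\nu}}\partial_{x_{j\nu}}$ is a constant-coefficient contraction in the \emph{row} index and $X\mapsto AX$ acts linearly on the rows of $X$, the chain rule gives, for every $A\in\GL(g,\CC)$,
\[
\big(\Delta_{pq}(\tP\circ A)\big)(X)=\sum_{i,j}a_{ip}\,a_{jq}\,(\Delta_{ij}\tP)(AX),
\]
that is, in matrix form, $\Delta(\tP\circ A)=A^t\,[(\Delta\tP)\circ A]\,A$, where $\tP\circ A$ denotes the function $X\mapsto\tP(AX)$ and $(\Delta\tP)\circ A$ the matrix-valued function $X\mapsto(\Delta\tP)(AX)$. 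This is the only computation of the proof, and keeping the row/column indices of $A$ straight is the one delicate point.

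For part (i), the implication from pluriharmonicity to harmonicity-after-every-$A$ is immediate, since $\Delta\tP\equiv 0$ forces $\Delta(\tP\circ A)\equiv 0$, hence also its trace. Conversely, taking traces in the transformation law, the hypothesis that $\tP\circ A$ is harmonic for every $A$ reads $\tr\big(AA^t\,(\Delta\tP)(AX)\big)=0$ for all $A,X$. Fixing an arbitrary matrix $Y$ and choosing $A$ invertible with $X=A^{-1}Y$, this becomes $\tr\big(AA^t\,(\Delta\tP)(Y)\big)=0$. As $A$ ranges over $\GL(g,\CC)$ the symmetric matrices $AA^t$ range over all invertible symmetric matrices (over $\CC$ every nondegenerate symmetric form is congruent to the identity), and these are Zariski-dense in the space of symmetric matrices; since the trace pairing on symmetric matrices is nondegenerate, we conclude $(\Delta\tP)(Y)=0$. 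As $Y$ was arbitrary, $\tP$ is pluriharmonic.

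For part (ii), one implication is again trivial. For the converse I would exploit the homogeneity $\tP\in\CC[R_1,\dots,R_g]_v$, which under $X_h\mapsto AX_h$ (so that $R_h=X_hX_h^t\mapsto AR_hA^t$) reads $\tP(AX)=\det(A)^v\,\tP(X)$. Comparing the $(1,1)$-entries in the transformation law with this relation, and using that $\det(A)^v$ is constant in $X$, gives
\[
\mathbf{a}_1^{\,t}\,(\Delta\tP)(AX)\,\mathbf{a}_1=\det(A)^v\,(\Delta_{11}\tP)(X),
\]
where $\mathbf{a}_1$ is the first column of $A$. The hypothesis $\Delta_{11}\tP=0$ kills the right-hand side. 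Fixing $Y$ and any nonzero $\mathbf{v}\in\CC^g$, I choose $A$ invertible with first column $\mathbf{v}$ and set $X=A^{-1}Y$, obtaining $\mathbf{v}^{\,t}(\Delta\tP)(Y)\,\mathbf{v}=0$ for every $\mathbf{v}$; by polarization (characteristic $\neq 2$) the symmetric matrix $(\Delta\tP)(Y)$ vanishes, and hence $\tP$ is pluriharmonic. I expect the genuinely substantive steps to be the two linear-algebra inputs over $\CC$—surjectivity of $A\mapsto AA^t$ onto invertible symmetric matrices (for (i)) and the polarization identity (for (ii))—while everything else is formal bookkeeping built on the transformation law.
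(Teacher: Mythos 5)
Your proof is correct, and it is genuinely more self-contained than the paper's. Both arguments ultimately rest on the same chain-rule identity $\Delta(\tP\circ A)=A^t\,[(\Delta\tP)\circ A]\,A$, but they exploit it differently. For (i) the paper gives no proof at all, simply citing Kashiwara--Vergne; you supply a complete argument by taking traces, substituting $Y=AX$, and using that $\{AA^t : A\in\GL(g,\CC)\}$ exhausts the invertible symmetric matrices (hence is Zariski-dense in $\Sym^2$), together with nondegeneracy of the trace pairing. For (ii) the paper only applies the transformation law to the permutation matrices swapping row $1$ with row $i$, which yields $\Delta_{ii}\tP=0$ for every $i$, hence harmonicity of $\tP$ and therefore of every $\det(A)^v\tP=\tP\circ A$; it then has to invoke (i) to upgrade this to pluriharmonicity. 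Your version of (ii) bypasses (i) entirely: comparing the $(1,1)$-entry of the transformation law with the homogeneity relation gives $\mathbf{v}^t(\Delta\tP)(Y)\mathbf{v}=0$ for every nonzero $\mathbf{v}$ (realized as the first column of an invertible $A$), and polarization of the symmetric matrix $(\Delta\tP)(Y)$ finishes the argument. The paper's route is shorter modulo the external reference; yours makes the two linear-algebra inputs (surjectivity of $A\mapsto AA^t$ onto nondegenerate symmetric matrices over $\CC$, and polarization) explicit and proves both parts from scratch, at the cost of a slightly longer write-up. The one delicate point, the placement of the indices of $A$ in the transformation law, is handled correctly.
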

\begin{proof}
The claim (i) is remarked in \cite{kashiwaravergne} and we omit the proof. In order to prove (ii), note that pluriharmonicity of $\tP$
implies that $\Delta_{11}(\tP)=0$ by definition.
Hence, it is enough to prove that $\Delta_{11}(\tP)=0$ implies pluriharmonicity.
For a fixed $i$ with $1\leq i\leq g$, let $A$ be the permutation matrix that exchanges the first row and the $i$-th row. Since
\begin{align*}
\Delta_{ii}(X)\cdot\tP(X) & =\det(A)^{-v} \Delta_{ii}(X)\cdot \tP(AX) \\
&=\det(A)^{2-v}\Delta_{11}(AX)\cdot \tP(AX)=0,
\end{align*}
the conclusion follows.
\end{proof}

Denoting $D_{h;11}$  the differential operator $D_{11}$ defined in \eqref{eq:Ddefined} with respect to the entries of the matrix $R_h$, and using \eqref{eq:DDelta} to rewrite $\Delta_{h,ij}$ for each $X_h$ as $D_{h;11}$,
by \Cref{elementary} we have the following.

\begin{cor}\label{cor:pluri}
Suppose that $\tP\in\CC[R_1,\dots,R_g]_v$ for some $v$.
Then $P$ is pluriharmonic with respect to the $g\times (gk)$ matrix $(X_1,\dots,X_g)$ if and only if
\begin{equation}\label{eq:Dharmonic}
\sum_{h=1}^{g}D_{h;11}P=0\,.
\end{equation}
\end{cor}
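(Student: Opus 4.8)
The plan is to reduce pluriharmonicity --- which by definition is a condition on $\tP$ as a function of the big $g\times(gk)$ matrix $X=(X_1,\dots,X_g)$ --- to the single scalar equation \eqref{eq:Dharmonic} in the $R$-variables, by combining \Cref{elementary}(ii) with the identity \eqref{eq:DDelta} applied block by block. The whole argument is essentially bookkeeping on top of these two prior results.

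First I would invoke \Cref{elementary}(ii): since by hypothesis $\tP\in\CC[R_1,\dots,R_g]_v$, the polynomial $P$ is pluriharmonic if and only if the single Laplacian vanishes, $\Delta_{11}(\tP)=0$, where $\Delta_{11}=\sum_{\nu=1}^{gk}\partial^2/\partial x_{1\nu}^2$ is taken with respect to the first row of $X$. This replaces the $g^2$ conditions $\Delta_{ij}\tP=0$ by a single one, which is the crucial simplification.

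Next comes the key computation. I would split the columns of $X$ into the $g$ blocks coming from $X_1,\dots,X_g$; writing $x_{h;i\mu}$ for the entries of $X_h$, the columns of the $h$-th block are exactly the columns of $X_h$, so the first-row Laplacian decomposes as $\Delta_{11}=\sum_{h=1}^g\Delta_{11}^{(h)}$ with $\Delta_{11}^{(h)}=\sum_{\mu=1}^k\partial^2/\partial x_{h;1\mu}^2$. Since $\tP=P(X_1X_1^t,\dots,X_gX_g^t)$ depends on the entries of $X_h$ only through $R_h=X_hX_h^t$, applying $\Delta_{11}^{(h)}$ amounts to applying $\Delta_{11}$ to $P$ viewed as a function of the single matrix $X_h$, the other $R$-blocks being inert. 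Hence the identity \eqref{eq:DDelta}, used for the $g\times k$ matrix $X_h$, gives $\Delta_{11}^{(h)}(\tP)=(D_{h;11}P)(X_1X_1^t,\dots,X_gX_g^t)$, where $D_{h;11}$ is the operator \eqref{eq:Ddefined} in the entries of $R_h$. Summing over $h$ yields
\[
\Delta_{11}(\tP)=\Big(\textstyle\sum_{h=1}^g D_{h;11}P\Big)\big|_{R_h=X_hX_h^t}.
\]

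Finally I would pass from this functional identity in the $X$-entries to the polynomial identity \eqref{eq:Dharmonic} in the $R$-entries. Because $a\geq\frac{g}{2}$, we have $k=2a\geq g$, so a generic $g\times k$ matrix $X_h$ has rank $g$ and the map $X_h\mapsto X_hX_h^t$ sweeps out a Zariski-dense subset of the symmetric $g\times g$ matrices; consequently the substitution homomorphism $\CC[R_1,\dots,R_g]\to\CC[\text{entries of }X_1,\dots,X_g]$ sending $R_h\mapsto X_hX_h^t$ is injective. Thus the displayed function of $X$ vanishes identically if and only if $\sum_{h=1}^g D_{h;11}P=0$ as an element of $\CC[R_1,\dots,R_g]$, and combining the three steps gives ``$P$ pluriharmonic'' $\Longleftrightarrow$ ``$\Delta_{11}(\tP)=0$'' $\Longleftrightarrow$ \eqref{eq:Dharmonic}. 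The only genuinely non-formal point is this last passage, which is exactly where the hypothesis $k\geq g$ (equivalently $a\geq\frac{g}{2}$) is needed; I expect it to be the main thing to state carefully, while everything else follows directly from \eqref{eq:DDelta} and \Cref{elementary}.
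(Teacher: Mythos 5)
Your proof is correct and follows essentially the same route as the paper: reduce to the single condition $\Delta_{11}(\tP)=0$ via \Cref{elementary}(ii), then decompose $\Delta_{11}$ over the column blocks $X_1,\dots,X_g$ and apply \eqref{eq:DDelta} to each block to obtain $\sum_{h}D_{h;11}P$. Your final remark --- that passing from the vanishing of $\bigl(\sum_h D_{h;11}P\bigr)\bigl|_{R_h=X_hX_h^t}$ back to the polynomial identity \eqref{eq:Dharmonic} uses injectivity of the substitution $R_h\mapsto X_hX_h^t$, which holds because $k=2a\geq g$ --- is a point the paper leaves implicit, and it is a worthwhile addition.
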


The above corollary applies to $Q_{g,a}$ and simplifies the verification of its pluriharmonicity.

\subsection{Pluriharmonicity of $Q_{g,a}$}\label{sec:Qpluri}

The result that we want to show is the following.

\begin{prop}\label{prop:existence}
The polynomial $Q_{g,a}$ is pluriharmonic.
\end{prop}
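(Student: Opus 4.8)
The plan is to invoke \Cref{cor:pluri}: since $Q_{g,a}\in\CC[R_1,\dots,R_g]_2$, its pluriharmonicity is equivalent to the single identity $\sum_{h=1}^g D_{h;11}Q_{g,a}=0$. Rather than expand each $D_{h;11}\fR(\bfn)$ one at a time, I would handle the whole family simultaneously through the generating series $\det(\fR)=\sum_{\bfn\in\bfN_g}\fR(\bfn)t^{\bfn}$, so that $\sum_h D_{h;11}\det(\fR)=\sum_{\bfn}\big(\sum_h D_{h;11}\fR(\bfn)\big)t^{\bfn}$ packages every coefficient at once.

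To evaluate $\sum_h D_{h;11}\det(\fR)$ I would pass to the $X$-variables of \Cref{sec:pluri}. Writing $R_h=X_hX_h^t$ with each $X_h$ of size $g\times k$, $k=2a$, and setting $\mathcal X=(X_1,\dots,X_g)$, one has $\fR\mapsto \mathcal X\,T\,\mathcal X^t$ with $T=\diag(t_1 I_k,\dots,t_g I_k)$; summing the identity \eqref{eq:DDelta} over the $g$ blocks and noting that the pairs $(h,\nu)$ index exactly the columns of $\mathcal X$, the operator $\sum_h D_{h;11}$ corresponds to the first–row Laplacian $\Delta^{(1)}=\sum_\mu \pa^2/\pa \mathcal X_{1\mu}^2$. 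Expanding $\det(\mathcal X T\mathcal X^t)$ by Cauchy–Binet and differentiating (each minor is linear in every entry of the first row, so only the squared first-order terms survive) I expect to arrive at the closed form
\[
\sum_h D_{h;11}\det(\fR)=2(2a)\Big(\textstyle\sum_h t_h\Big)\det\nolimits_{\hat1\hat1}(\fR)-2\sum_h t_h^2\,\frac{\pa}{\pa t_h}\det\nolimits_{\hat1\hat1}(\fR),
\]
where $\det_{\hat1\hat1}(\fR)$ is the determinant of the principal submatrix of $\fR$ obtained by deleting the first row and column; the second term arises as the trace of the adjugate of $\fR_{\hat1\hat1}$ against $\sum_h t_h^2 R_h$, rewritten using $\frac{\pa}{\pa t_h}\det\nolimits_{\hat1\hat1}(\fR)=\tr(\cdots)$.

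Writing $\det_{\hat1\hat1}(\fR)=\sum_{\bfn'}\fR_{\hat1\hat1}(\bfn')t^{\bfn'}$ with $|\bfn'|=g-1$, extracting the coefficient of $t^{\bfn}$ and re-indexing by $\bfn'=\bfn-\bfe_h$ turns the desired identity $\sum_\bfn c(\bfn)\sum_h D_{h;11}\fR(\bfn)=0$ into
\[
\sum_{\bfn'}\Big(\sum_{h=1}^g (2a-n'_h)\,c(\bfn'+\bfe_h)\Big)\fR_{\hat1\hat1}(\bfn')=0.
\]
Crucially, I would prove that every scalar coefficient $\sum_h (2a-n'_h)c(\bfn'+\bfe_h)$ vanishes, so that no linear independence of the $\fR_{\hat1\hat1}(\bfn')$ is needed. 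Because $c(\bfn)\neq 0$ only when $\bfn$ is a permutation of $(m,1^{g-m},0^{m-1})$, the shifted index $\bfn'+\bfe_h$ meets the support of $c$ only in a few configurations, and the condition splits into exactly two families: for $\bfn'$ a permutation of $(1^{g-1},0)$ it reads $2a\,C(1)+(g-1)(2a-1)C(2)=0$, while for $\bfn'$ with a single large entry $M$ (at some position $h_0$) with $2\le M\le g-1$ it reads $(2a-M)C(M+1)+M\,(2a)\,C(M)=0$. Both are then checked directly from the definitions of $C(m)$.

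The main obstacle is the generating-function computation of $\sum_h D_{h;11}\det(\fR)$: carrying out the Cauchy–Binet differentiation correctly and recognizing the resulting weighted sum of squared minors as $\sum_h t_h^2\,\pa_{t_h}\det_{\hat1\hat1}(\fR)$. Once that closed form is in hand the rest is bookkeeping — determining which shifts $\bfn'+\bfe_h$ fall into the support of $c$ (this is where the precise shape $(m,1^{g-m},0^{m-1})$ enters, and where the hypothesis $2a\ge g$, guaranteeing $C(1)\neq 0$, is used) — together with the two one-line identities among the constants $C(m)$.
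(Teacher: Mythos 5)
Your proposal is correct, and its overall skeleton coincides with the paper's: reduce pluriharmonicity via \Cref{cor:pluri} to the single identity $\sum_h D_{h;11}Q_{g,a}=0$, repackage this as the scalar conditions \eqref{harmoniccondition} on the coefficients $c(\bfn)$, and verify those by the two telescoping identities $2a\,C(1)+(g-1)(2a-1)C(2)=0$ and $(2a-M)C(M+1)+M(2a)C(M)=0$ (your two families are exactly the paper's second and third cases; you should also record the vacuous case where $\bfn'$ has two entries $\geq 2$, so that every $c(\bfn'+\bfe_h)$ vanishes). The one genuinely different ingredient is how you compute $\sum_h D_{h;11}\det(\fR)$: the paper isolates this as \Cref{deriv} and proves it by cofactor expansion directly in the $r_{h;ij}$-variables, whereas you pass to the $X$-variables and apply the first-row Laplacian to the Cauchy--Binet expansion $\det(\mathcal X T\mathcal X^t)=\sum_{|S|=g}\bigl(\det\mathcal X_{\cdot S}\bigr)^2\prod_{\mu\in S}T_\mu$. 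Your "main obstacle" does go through: each $\det\mathcal X_{\cdot S}$ is affine-linear in each first-row entry, so only squared cofactors survive; reindexing over $S'=S\setminus\{\mu\}$ and splitting $\sum_{\mu\notin S'}T_\mu=2a\sum_h t_h-\sum_{\mu\in S'}T_\mu$, a second application of Cauchy--Binet identifies the two resulting sums as $\det_{\hat1\hat1}(\fR)$ and $\sum_h t_h^2\,\pa_{t_h}\det_{\hat1\hat1}(\fR)$, giving precisely your closed form --- which is the generating-function form of \Cref{deriv}. Your route buys a cleaner conceptual computation (and sidesteps the $\tfrac{1+\delta_{ij}}{2}$ bookkeeping of the operators $\pa_{ij}$), at the cost of invoking \eqref{eq:DDelta} to transport the operator to the $X$-side; the paper's direct computation stays entirely in the $R$-variables. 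One small remark: you are right that only the vanishing of each scalar coefficient is needed for sufficiency; the paper's linear-independence observation about the $\widehat{\fR}_{1;1}(\bfp)$ serves only to show that \eqref{harmoniccondition} is also necessary.
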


Since we will be dealing with minors of the matrix $\fR$ defined by~\eqref{eq:Mdefined}, we let
\begin{equation}\label{eq:bdefined}
\bfP\coloneqq\{\bfp=(n'_1,\dots,n'_g)\in\NN^g\,|\,n'_h\geq 0\textrm{ for all }h,\ \sum n'_h=g-1\}\,,
\end{equation}
and we denote by $\widehat{\fR}_{k;l}$ the determinant of the matrix $\fR_{\widehat{k};\widehat{l}}$, and denote by
$\widehat{\fR}_{k;l}(\bfp)$ the polynomial appearing in the expansion
\[
\widehat{\fR}_{k;l}=\sum_{\bfp\in\bfP}\widehat{\fR}_{k;l}(\bfp)
t_1^{p_1}\cdots t_g^{p_g}\,.
\]

We can now compute the derivative of $\fR(\bfn)$ that enters into the formula \eqref{eq:Dharmonic} for pluriharmonicity.
\begin{lm}\label{deriv}
For any $\bfn\in {\bf N}_g$, we have
\[
D_{h;11}\fR(\bfn)=2(k-n_h+1)\widehat{\fR}_{1;1}(\bfn-\bfe_h)\,,
\]
where $k=2a$, and  $\{\bfe_1,\dots,\bfe_g\}$ is the standard basis of $\ZZ^g$.
\end{lm}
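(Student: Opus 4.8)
The plan is to prove the formula by computing the action of $D_{h;11}$ on the whole generating function $\det(\fR)$ and then reading off the coefficient of $t^\bfn$. Since $D_{h;11}$ differentiates only the entries of $R_h$ and leaves the auxiliary variables $t_1,\dots,t_g$ untouched, it commutes with the extraction of Taylor coefficients in $t$, so that $D_{h;11}\fR(\bfn)$ is exactly the coefficient of $t^\bfn$ in $D_{h;11}\det(\fR)$. Writing $\widehat{\fR}_{1;1}=\sum_{\bfp\in\bfP}\widehat{\fR}_{1;1}(\bfp)t^\bfp$, the claimed right-hand side is precisely the coefficient of $t^\bfn$ in $2t_h\big(k-t_h\frac{\pa}{\pa t_h}\big)\widehat{\fR}_{1;1}$: indeed $t_h\frac{\pa}{\pa t_h}$ multiplies the $\bfp$-term by $p_h$, and the prefactor $t_h$ shifts $\bfp$ to $\bfp+\bfe_h=\bfn$, so that $p_h=n_h-1$. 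Thus the lemma is equivalent to the single generating-function identity
\[
D_{h;11}\det(\fR)=2t_h\Big(k-t_h\frac{\pa}{\pa t_h}\Big)\widehat{\fR}_{1;1}\,.
\]

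To establish this I would pass to the $X$-model. Substituting $R_l=X_lX_l^t$ and using \eqref{eq:DDelta} for the $h$-th matrix, the left-hand side becomes $\Delta_{h;11}$ applied to $\det\big(\sum_l t_l X_l X_l^t\big)$, where $\Delta_{h;11}=\sum_{\nu=1}^k\pa^2/\pa x_{h;1\nu}^2$ is the Laplacian in the first row of $X_h$. Since $k=2a\ge g$, the substitution $R\mapsto XX^t$ is dominant on symmetric matrices, so an identity of polynomials in the entries of $R_1,\dots,R_g$ may be verified after this substitution. I would then compute $\Delta_{h;11}\det\big(\sum_l t_l X_lX_l^t\big)$ by the chain rule. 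Differentiating once gives $2t_h\sum_q \frac{\pa\det\fR}{\pa\fR_{1q}}\,x_{h;q\nu}$, where $\frac{\pa\det\fR}{\pa\fR_{1q}}$ denotes the $(1,q)$-cofactor; differentiating again and summing over $\nu$ splits the answer into a contact term and a second-derivative term.

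The contact term, coming from $\pa x_{h;q\nu}/\pa x_{h;1\nu}=\delta_{q1}$ summed over the $k$ values of $\nu$, contributes exactly $2k\,t_h\,\widehat{\fR}_{1;1}$, producing the factor $k$. The remaining term equals $2t_h^2\sum_{q,b}\frac{\pa^2\det\fR}{\pa\fR_{1q}\,\pa\fR_{b1}}\,r_{h;qb}$, and I would simplify it using three facts. First, the $(1,q)$-cofactor is independent of the first-row entries, so the half of the Hessian differentiating a first-row entry vanishes and only the column-$1$ derivatives survive. Second, the antisymmetry of the second mixed cofactors, $\frac{\pa^2\det\fR}{\pa\fR_{11}\,\pa\fR_{bq}}=-\frac{\pa^2\det\fR}{\pa\fR_{1q}\,\pa\fR_{b1}}$, which follows by pairing each contributing permutation $\pi$ with $\pi\circ(1\,b)$, of opposite sign but the same complementary minor, rewrites the contraction in terms of derivatives of $\widehat{\fR}_{1;1}=\frac{\pa\det\fR}{\pa\fR_{11}}$. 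Third, since $\pa\fR_{bq}/\pa t_h=r_{h;bq}$ and the cofactor matrix of the symmetric minor $\fR_{\widehat 1;\widehat 1}$ is symmetric, the resulting contraction $\sum_{b,q}\frac{\pa\widehat{\fR}_{1;1}}{\pa\fR_{bq}}\,r_{h;bq}$ is exactly $\frac{\pa}{\pa t_h}\widehat{\fR}_{1;1}$. Collecting terms yields the second summand $-2t_h^2\frac{\pa}{\pa t_h}\widehat{\fR}_{1;1}$, hence the displayed identity; extracting the coefficient of $t^\bfn$ then finishes the proof, with the convention $\widehat{\fR}_{1;1}(\bfn-\bfe_h)=0$ when $n_h=0$ matching $D_{h;11}\fR(\bfn)=0$ in that case.

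The main obstacle is precisely this second-derivative term: one must correctly handle the compound (double) cofactors of $\det(\fR)$ together with the symmetric-derivative normalizations $\frac{1+\delta_{ij}}{2}$, and in particular prove the sign identity that turns the Hessian of the determinant, contracted against $R_h$, into the first-order Euler-type operator $t_h\frac{\pa}{\pa t_h}$ applied to the minor $\widehat{\fR}_{1;1}$. Everything else reduces to routine bookkeeping with the chain rule and Laplace expansion.
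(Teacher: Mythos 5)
Your proof is correct and follows essentially the same route as the paper: both reduce the lemma to the single generating-function identity $D_{h;11}\det(\fR)=2t_h\bigl(k-t_h\tfrac{\pa}{\pa t_h}\bigr)\widehat{\fR}_{1;1}$, split the computation into a first-order (contact) term giving $2kt_h\widehat{\fR}_{1;1}$ and a contracted-Hessian term giving $-2t_h^2\,\tfrac{\pa}{\pa t_h}\widehat{\fR}_{1;1}$, and then extract the coefficient of $t^{\bfn}$. The only real difference is cosmetic: you verify the identity after substituting $R_l=X_lX_l^t$ and computing the honest Laplacian $\Delta_{h;11}$ there (using dominance of $X\mapsto XX^t$ for $k\ge g$), whereas the paper applies $D_{h;11}$ directly in the $r_{ij}$-variables and identifies the Hessian contraction with $t_h\tfrac{\pa}{\pa t_h}\widehat{\fR}_{1;1}$ by row-by-row Laplace expansion of the minor rather than by your cofactor antisymmetry $\tfrac{\pa^2\det\fR}{\pa\fR_{11}\pa\fR_{bq}}=-\tfrac{\pa^2\det\fR}{\pa\fR_{1q}\pa\fR_{b1}}$ --- the two identities are equivalent.
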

\begin{proof}
By symmetry, it is enough to prove this for $h=1$; for simplicity, we just write $r_{ij}$ for the entries of the symmetric matrix $r_{1;ij}$, and define $\pa$ by \eqref{eq:padefined}.
We recall that $D_{1;11}=k\cdot\pa_{11}+\sum_{i,j=1}^g r_{ij}\pa_{1i}\pa_{1j}$.

Then by treating the cases $i=1$ and $i\ne 1$ separately, and checking the factor of $1/2$ versus $1$ appearing in the definition of $\pa$ for these entries, we see that
\[
\pa_{1i}\det (\fR)=2(-1)^{1+i}t_1\widehat{\fR}_{1;i}
\]
for any $i=1,\dots,g$. To compute the second order derivatives appearing in $D_{1;11}$, we first note that since $\widehat{\fR}_{1;1}$ does not depend on any $r_{1i}$,
we have $\pa_{1i}\pa_{1j}\det (\fR)=0$ if $i=1$ or $j=1$.

Otherwise, for $i,j\neq 1$, we compute
\begin{align*}
r_{ij}\pa_{1i}\pa_{1j}\det (\fR) &=(-1)^{1+j}t_1r_{ij}\pa_{1i}\widehat{\fR}_{1;j}
 =(-1)^{1+j+1+(i-1)}t_1^2r_{ij}\widehat{\fR}_{\{1,i\};\{1,j\}}\\
&=(-1)(-1)^{(i-1)+(j-1)}t_1^2r_{ij}\widehat{\fR}_{\{1,i\};\{1,j\}}\,.
\end{align*}
Summing these identities yields
\[
\sum_{i,j=2}^{g}r_{ij}\pa_{1i}\pa_{1j}\det (\fR)
=
(-t_1)\sum_{i,j=2}^{g}t_1r_{ij}(-1)^{(i-1)+(j-1)}\widehat{\fR}_{\{1,i\};\{1,j\}}\,.
\]
Here for a fixed $i$, the sum $\sum_{j=2}^{g}(-1)^{(i-1)+(j-1)}r_{ij}\widehat{\fR}_{\{1,i\};\{1,j\}}$
is nothing but the derivative of the $(i-1)$-th row of $\widehat{\fR}_{1;1}$ with respect to $t_1$, and thus
\[
\sum_{i,j=2}^{g}(-1)^{(i-1)+(j-1)}r_{ij}\widehat{\fR}_{\{1,i\};\{1,j\}}
=\frac{\pa}{\pa t_1}\widehat{\fR}_{1;1}\,.
\]
Recall that
$\widehat{\fR}_{1;1}=\sum_{\bfp\in\bfP}\widehat{\fR}_{1;1}(\bfp) t_1^{p_1}\dots t_g^{p_g} $ and note that
\[
t_1\frac{\pa}{\pa t_1}(t_1^{p_1}\dots t_g^{p_g}\widehat{\fR}_{1;1}(\bfp))
=p_1t_1^{p_1}\dots t_g^{p_g}\widehat{\fR}_{1;1}(\bfp)\,.
\]
Thus the coefficient of $t_1^{n_1}\dots t_g^{n_g}$ in the expansion of $D_{1;11}\det (\fR) $
is equal to
\[
2k\widehat{\fR}_{1;1}(n_1-1,n_2,\dots,n_g)-2(n_1-1)
\widehat{\fR}_{1;1}(n_1-1,n_2,\dots,n_g)\,.
\]
\end{proof}

As a consequence of \Cref{deriv}, we have
\begin{align*}
\frac{(2a-1)!}{(2a-g)!}\sum_{h=1}^g D_{h;11}Q_{g,a} &=
\sum_{h=1}^g c(\bfn) D_{h;11}\fR(\bfn)\\
&=\sum_{h=1}^g
2(k-n_h+1)c(\bfn)\widehat{\fR}_{1;1}(\bfn-\bfe_h)\\
&=2\sum_{h=1}^g (k-n'_h)c(\bfp+\bfe_h)\widehat{\fR}_{1;1}(\bfp)\,.
\end{align*}
Thus by \Cref{cor:pluri}, to check pluriharmonicity of $Q_{g,a}$ it is enough to check that
\begin{equation}\label{harmoniccondition}
  \sum_{h=1}^g (k-n'_h)c(\bfp+\bfe_h)=0
\end{equation}
for all $\bfp\in\bfP$.

Comparing the degrees of $\widehat{\fR}_{1;1}(\bfp)$ with respect to $R_h$, one can see that the set $\{\widehat{\fR}_{1,1}(\bfp)\,|\,\bfp\in\bfP\}$ is linearly independent over $\CC$, and so \eqref{harmoniccondition} is actually equivalent to the pluriharmonicity of $Q_{g,a}$.

\begin{proof}[Proof of \Cref{prop:existence}]
It is enough to verify \eqref{harmoniccondition} for every $\bfp\in\bfP$.
Up to reordering the entries of $\bfp$, we can assume that they are non-increasing.

If $n'_1\geq n'_2>1$, then $\bfp+\bfe_{\ell}$ has two entries larger than $1$, and so by definition we have $c(\bfp+\bfe_{\ell})=0$ for any $\ell$. It follows that all the terms in \eqref{harmoniccondition} are equal to zero, and the equation is trivially  satisfied.

For $\bfp=(1,\dots,1,0)$, the LHS of \eqref{harmoniccondition} is
$$
 k\cdot c(1,\dots,1)+(k-1)\biggl(c(2,1,\dots,1,0)+c(1,2,\dots,1,0)+\dots+c(1,\dots,2,0)\biggr) =k\cdot C(1)+(k-1)(g-1)C(2)\,.
$$
By definition of $C(1)$ and $C(2)$, the terms cancel, yielding $0$.

Let now $\bfp=(m,1,\dots,1,0,\dots,0)$ with $g-m$ entries $1$. If $2\leq \ell\leq g-m$, then $n_\ell>1$ and $c(\bfn'+\bfe_\ell)=0$ by definition.
We then have
$$c(\bfp+\bfe_1)=c(m+1,1,\dots,1,0,\dots,0)=C(m+1)\,.$$
If $g-m+1\leq \ell$, then $\bfp+\bfe_\ell$ is of type $(m,1,\dots,1,0,\dots,0)$, $(m,1, \dots,1,0,1,0,\dots,0)$, \dots, or $(m,1,\dots,1,0,\dots,0,1)$:
in all these cases $\bfp+\bfe_\ell$ has $g-m$ entries $1$, and thus $c(\bfp+\bfe_\ell)=C(m)$. So LHS of \eqref{harmoniccondition} is given by
\[
(k-m)C(m+1)+km\cdot C(m)\,,
\]
which also vanishes by our definition of the constants $C(m)$.
\end{proof}

\begin{exa}
In the case $g=2$ we obtain
\[
Q_{2,a}=  \fR(1,1)-\frac{2a}{2a-1}\fR(2,0)-\frac{2a}{2a-1}\fR(0,2)\,,
\]
where we have used $k=2a$.
This  is a special case of the discussion in \cite{ehib}.
\end{exa}

\begin{proof}[Proof of \Cref{thm:precise}]
The polynomial $Q_{g,a}$,
defined in \Cref{sec:explicit},
belongs to $\CC[R_1,\dots,R_g]_2$
and is pluriharmonic by \Cref{prop:existence}.
Then (i) and the first part of (ii) follow from \Cref{thm:ibu}.
Moreover, since $\calD_{Q_{g,a}}(F_1,\dots,F_g)$ is $\CC$-linear in each $F_h$,
and since $F_h$ and $\frac{\pa F_h}{\pa\tau_{ij}}$ have the same
vanishing order at the boundary for all $h$ and all $i,j$, it follows that
$\calD_{Q_{g,a}}(F,\dots,F)$ has vanishing order $\beta\geq gb$. This completes the proof of (ii).

As for (iii), note that $2a\geq g\geq 2$ ensures that the constant $C(1)$ defined in \Cref{sec:explicit} is non-zero and so, by construction,
\[
Q_{g,a}=\fR(1,\dots,1)+\sum_{\bfn\neq(1,\dots,1)} \frac{(2a-g)! c(\bfn)}{(2a-1)!(g-1)}\fR(\bfn)\,.
\]
By \Cref{cor:summands} it follows that
\[
\calD_{Q_{g,a}}(F,\dots,F)\equiv g!\ \det(\pa F)\quad\pmod{F}\,,
\]
and so (iii) is proven.
The last claim is an immediate consequence of (i) and (iii),
as the modular form $\calD_{Q_{g,a}}(F)-\calD_{Q'_{g,a}}(F)$ vanishes along $\{F=0\}$.
\end{proof}

We make one last remark on the above proof. We are not claiming that $Q_{g,a}$ or the associated differential
operator $\calD_{Q_{g,a}}$ are unique. Since we are looking for polynomials in $\CC[R_1,\dots,R_g]_2$,
these must be linear combinations of the $\fR(\bfn)$'s by \Cref{procesi}. If $Q'_{g,a}\in\CC[R_1,\dots,R_g]_2$ satisfies property (iii) in
\Cref{thm:precise}, then it must take the form
\[
Q'_{g,a}=\fR(1,\dots,1)+\sum_{\bfn\neq(1,\dots,1)} c'(\bfn) \fR(\bfn)
\]
by \Cref{cor:summands}. Hence, the restrictions of $\calD_{Q'_{g,a}}(F)$
and $\calD_{Q_{g,a}}(F)$ to the locus $\{F=0\}$ agree.
Note that the coefficients $c'(\bfn)$ may differ from the $c(\bfn)$ that were defined in \Cref{sec:explicit}.


\begin{thebibliography}{}
\bibitem[AlBr12]{AlBr}
V. Alexeev, and A. Brunyate.
\newblock {\em Extending the Torelli map to toroidal compactifications of Siegel space.}
\newblock Invent. Math. 188 (2012) 1, 175--196.

 \bibitem[AnMa67]{anma}
A.~Andreotti and A.~L. Mayer.
\newblock {\em On period relations for abelian integrals on algebraic curves.}
\newblock  Ann. Scuola Norm. Sup. Pisa, 21 (1967), 189--238.

\bibitem[ACGH85]{ACGH}
E.~Arbarello, M.~Cornalba, P.~A.~Griffiths, and J.~Harris.
\newblock {\em Geometry of algebraic curves. {V}ol. {I}}, volume 267 of {\em
  Grundlehren der Mathematischen Wissenschaften}.
\newblock Springer-Verlag, New York, 1985.

\bibitem[CFM13]{cfm}
D.~Chen, G.~Farkas and I.~Morrison.
\newblock {\em Effective divisors on moduli spaces of curves and abelian varieties.}
\newblock A celebration of algebraic geometry,
Clay Mathematics Proceedings {\bf 18} (2013), 131-169.

\bibitem[CT20]{CT}G.~Chenevier  and O~ Ta\"ibi.
\newblock  {\em Discrete series multiplicities for classical groups over $\ZZ$ and level $1$ algebraic cusp forms.}
\newblock Publ. Math. Inst. Hautes Études Sci. 131 (2020), 261–323.

\bibitem[CT]{CTweb}
G.~Chenevier  and O~ Ta\"ibi.
\newblock {\texttt{https://gaetan.chenevier.perso.math.cnrs.fr/levelone/siegel/siegel$\_$genus6.txt}}\, ,
\newblock computer-based results,
\newblock where $(k,k,k,k,k,k):d$ indicates that the vector space of Siegel cusp forms
of weight $k$ on $\calA_6$ is $d$-dimensional.


\bibitem[Cl83]{C} C.~H.~Clemens.
\newblock {\em Double solids.}
\newblock  Adv. in Math. 47  (1983), 107--230.

\bibitem[Co75]{cohen}
H.~Cohen.
\newblock {\em Sums involving the values at negative integers of L-functions of quadratic characters.}
\newblock Math. Ann. 217 (1975), 271--285.

\bibitem[CoHa88]{ch}
M.~Cornalba and J.~Harris.
\newblock {\em Divisor classes associated to families of stable varieties, with applications to the moduli space of curves.}
\newblock Ann. Sci. \'Ecole Norm. Sup. 21 (1988), 455--475.

\bibitem[Deb92]{debarredecomposes}
O.~Debarre.
\newblock  {\em Le lieu des vari\'et\'es ab\'eliennes dont le diviseur th\^eta est
  singulier a deux composantes.}
\newblock  Ann. Sci. \'Ecole Norm. Sup. 25 (1992),  687--707.

\bibitem[DSMS21]{mss}
M.~Dittmann,  R.~Salvati~Manni and  N.~Scheithauer.
\newblock  {\em  Harmonic theta series and the Kodaira dimension  of $\calA_6$.}
  \newblock  Algebra Number Theory 15(1)  (2021), 271-–285.


\bibitem[D10]{Do} I.~\ Dolgachev,
\newblock   {\em Lectures in Invariant Theory}
 \newblock  Cambridge University Press (2010)


\bibitem[D84]{D} R.\ Donagi,
\newblock   {\em The unirationality of $\calA_5$.}
 \newblock  Ann. of Math.  119  (1984)  269--307.



\bibitem[EI98]{ehib}W.~Eholzer and T.~ Ibukiyama.
\newblock { \em RAankin–Cohen Type  DIifferential  Operators for Siegel modular forms},
\newblock International Journal of MathematicsVol. 09, (1998) 443--463





\bibitem[EiHa87]{eisenbudharris}
D.~Eisenbud and J.~Harris.
\newblock {\em The Kodaira dimension of the moduli space of curves of genus $\geq 23$}
\newblock Invent. Math. 90 (1987), 359-387.

\bibitem[Fa10]{gie}
G.~Farkas.
\newblock  {\em  Rational maps between moduli spaces of curves and Gieseker-Petri divisors. }
 \newblock J. Algebraic Geom. 19(2),  (2010),  243–284.

\bibitem[FGSMV14]{fgsmv_published}
G.~Farkas, S.~Grushevsky,  R.~Salvati~Manni and A. Verra.
\newblock   {\em Singularities of theta divisors and the geometry of $\calA_5$.}
\newblock J. European Math. Soc. (JEMS),  16(9), (2014) 1817–1848.

\bibitem[FGSMV22]{fgsmv}
G.~Farkas, S.~Grushevsky,  R.~Salvati~Manni and A.~Verra.
\newblock   {\em Singularities of theta divisors and the geometry of $\calA_5$.}
\newblock {E-print \texttt{arXiv:1112.6285}, version 3: April 7, 2022}.

\bibitem[FJP21]{fjp}
G.~Farkas, D.~Jensen, S.~Payne.
\newblock {\em The non-abelian Brill-Noether divisor
on $\overline{\mathcal{M}}_{13}$ and the Kodaira dimension
of $\overline{\mathcal{R}}_{13}$.}
\newblock {E-print \texttt{arXiv:2110.09553}.}

\bibitem[FaPo05]{FP} G.~Farkas and M.~Popa.
\newblock  {\em Effective divisors on $\calM_g$, curves on K3 surfaces and the slope conjecture.}
\newblock  J. Algebraic Geometry 14, (2005),   241--267.

\bibitem[FaVe16]{FV} G.~Farkas and A.~Verra.
\newblock {\em The universal abelian variety over $\calA_5$.}
\newblock {Ann. Sci. de l'\'Ec. Nor. Sup. {\bf{49}} (2016), 521-543.}

\bibitem[Fed12]{Fed} M.~Fedorchuk.
\newblock  {\em The final log canonical model of the moduli space of stable curves of genus 4.}
\newblock    Int. Math. Res. Not. IMRN {\bf 24} (2012),  5650--5672.

\bibitem[FoPa12]{Fp} C.~Fontanari and S.~Pascolutti,
\newblock  {\em An affine open covering of $\calM_g$ for $g\leq 5$. }
\newblock   Geom. Dedicata 158 (2012), 61--68.

\bibitem[Fre83]{frei}
E.~Freitag.
\newblock {\em Siegelsche {M}odulfunktionen}, volume 254 of {\em Grundlehren
  der Mathematischen Wissenschaften}. Springer, Berlin, 1983.

\bibitem[Gru09]{grag}
S.~Grushevsky.
\newblock {\em Geometry of {$\calA_g$} and its compactifications.}
\newblock In {\em Algebraic geometry---{S}eattle 2005. {P}art 1}, volume~80 of
  {\em Proc. Sympos. Pure Math.} 93--234,  Amer. Math. Soc., Providence,  2009.

\bibitem[GrSM07]{gsmordertwo}
S.~Grushevsky and R.~Salvati Manni.
\newblock {\em Singularities of the theta divisor at points of order two}.
\newblock Int. Math. Res. Not. IMRN 2007, no. 15, Art. ID rnm045, 15 pp.

\bibitem[GrSM14]{gsmPrym}
S.~Grushevsky and R.~Salvati Manni.
\newblock {\em The Prym map on divisors, and the slope of $\calA_5$}
(with an appendix by K.~Hulek).
\newblock Int. Math. Res. Not. (2014), 6645--6660.

\bibitem[Ha84]{harris}
J.~Harris.
\newblock{\em On the Kodaira dimension of the moduli space of curves, II. The even-genus case.}
\newblock Invent. Math. 75 (1984), 437-466.

\bibitem[HaMo90]{hm}
J.~ Harris and I.~ Morrison.
\newblock { \em Slopes of effective divisors on the moduli space of stable curves.}
\newblock  Invent. Math. 99  (1990) 321--355.

\bibitem[HaMu82]{harrismumford}
J.~Harris and D.~Mumford.
\newblock{\em On the Kodaira dimension of the moduli space of curves.}
\newblock{Invent. Math. 67 (1982), 23--86.}

\bibitem[HoKo18]{hoko}
E.~Hofmann and W.~Kohnen.
\newblock {\em Holomorphic derivatives and Siegel modular forms}.
\newblock Bull. Korean Math. Soc. 55 (2018), 1621--1625.

\bibitem[Ib99]{Ibukiyama}
T.~ Ibukiyama.
\newblock { \em On differential operators on automorphic forms and invariant pluri-harmonic polynomials},
\newblock Comm. Math. Univ. St. Pauli 48 (1999) 103–118.

\bibitem[IbZa14]{ibuzagier}
T.~ Ibukiyama and D.~ Zagier.
\newblock { \em Higher spherical polynomials}
\newblock MPI Preprint No. 2014-14. (2014). 96 pp.
\verb+https://www.mpim-bonn.mpg.de/preblob/5530+

\bibitem[Igu81a]{igusagen4}
J.-I.~Igusa.
\newblock   {\em  On the irreducibility of {S}chottky's divisor.}
\newblock J. Fac. Sci. Univ. Tokyo Sect. IA Math., 28(3)  (1982), 531--545 .

\bibitem[Igu81b]{igusachristoffel}
J.-I.~Igusa.
\newblock  {\em Schottky's invariant and quadratic forms.}
\newblock In  E. {B}. {C}hristoffel ({A}achen/{M}onschau, 1979), 352--362, Birkh\"auser, Basel, 1981.

\bibitem[KaVe78]{kashiwaravergne}
M.~ Kashiwara and M. ~ Vergne.
\newblock { \em  On the Segal-Shale-Weil representations and harmonic polynomials. }
\newblock Invent. Math. 44, (1978), 1--48.

\bibitem[KrSM02]{krsm}
\newblock J. Kramer and R. Salvati Manni.
\newblock {\em An integral characterizing the Andreotti–Mayer locus.}
\newblock Abh. Math. Sem. Univ. Hamburg 72 (2002), 47--57.


\bibitem[LeRi]{LR}
R.~Lercier  and C.~Ritzenthaler
\newblock  {\em   Siegel modular forms of degree three and invariants of ternary quartics}.
\newblock E-print {\texttt{arXiv:1907.07431}}.

\bibitem[MoMu83]{MoMu} S.~Mori, S.~Mukai,
\newblock  {\em The uniruledness of the moduli space of curves of genus 11.},
\newblock  {\em Algebraic geometry (Tokyo/Kyoto, 1982),}
\newblock  Lecture Notes in Math. {\bf 1016},  334--353,  Springer, Berlin, 1983.

\bibitem[Mum83]{mum}
D.~Mumford.
\newblock {\em  On the {K}odaira dimension of the {S}iegel modular variety.}
\newblock In {\em Algebraic geometry---open problems ({R}avello, 1982)},
\newblock   Lecture Notes in Math. {\bf 997}, 348--375, Springer, Berlin, 1983.

\bibitem[Ra16]{ramanujan}
S.~Ramanujan.
\newblock {\em On certain arithmetical functions.}
\newblock
Trans. Cambridge phil. Soc. {\bf 22} (1916), pp.~159-184.

\bibitem[Ra56]{rankin}
R. Rankin.
\newblock {\em The construction of automorphic forms from the derivatives of a modular form.}
\newblock J. Indian Math. Soc. {\bf 20} (1956), 103--116.

\bibitem[SM92]{smmodfour}
R.~Salvati~Manni.
\newblock {\em Modular forms of the fourth degree. Remark on the paper:
  ``Slopes of effective divisors on the moduli space of stable curves}
 \newblock    Lecture Notes in Math. {\bf 1515},
  106--111, Springer, Berlin, 1992.

\bibitem[SM00]{sm}
R.~Salvati~Manni.
\newblock {\em Slope of cusp forms and theta series. }
 \newblock    J. Number Theory 83 (2000), 282--296.

\bibitem[Sat86]{satoh}
T. ~Satoh.
\newblock  {\em On certain vector valued Siegel modular forms of degree two.}
\newblock Math. Ann. \textbf{274} (1986) 335--352.

\bibitem[Se73]{serre:modulaire}
J.-P.~Serre.
\newblock {\em Congruences et formes modulaires.}
\newblock S\'eminaire N.~Bourbaki, 1973, exp. no.~416, p. 319-338.

\bibitem[SB06]{sb}
N. Shepherd-Barron.
\newblock {\em Perfect forms and the moduli space of abelian varieties.}
\newblock Invent. Math. 163 (2006), 25–-45.

\bibitem[SD73]{sd}
H.P.F.~Swinnerton-Dyer.
\newblock {\em On $\ell$-adic representations and congruences for coefficients of modular forms.}
\newblock In ``Modular functions of one variable III'',
 Lecture Notes in Mathematics {\bf{350}} (1973), Springer, pp.~1-55.

\bibitem[Tai82]{tai}
Y.-S.~Tai.
\newblock {\em On the Kodaira dimension of the moduli space of abelian varieties.}
\newblock   Invent.\ Math.\  68 ( 1982) , 425--439.

\bibitem[Tsu86]{Tsu} S.~Tsuyumine.
\newblock
{\em On Siegel modular forms of degree three.}
 \newblock    Am.\ Jour.\ Math.  108 (1986)  755--862.

\bibitem[Ve84]{V} A.\ Verra.
\newblock {\em A short proof of the unirationality of $\mathcal A_5$.}
\newblock  Nederl.\ Akad.\ Wetensch.\ Indag.\ Math.\ {\bf 46} (1984), 339--355

\bibitem[We86]{we86}
R.~Weissauer.
\newblock {\em Uber Siegelschen Modulformen dritten Grades.}
\newblock Preprint (1986).


\bibitem[We87]{we}
R.~Weissauer.
\newblock {\em Divisors of the Siegel modular variety}.
\newblock In  ``Lecture Notes in Mathematics'' (LNM) 1240, Springer (1987), 304--324.


\bibitem[YaYi15]{yangyin}
E.~Yang and L.~Yin.
\newblock {\em Derivatives of Siegel modular forms and modular connections}.
\newblock Manuscripta Mathematica {\textbf{146}} (2015), 65--84.

\bibitem[Za08]{zagier}
D.~Zagier.
\newblock {\em Elliptic modular forms and their applications.}
\newblock In ``The 1-2-3 of modular forms'' (2008), Springer, 1--103.

\end{thebibliography}
\end{document}